\newtheorem{theorem}{Theorem}
\newtheorem{proposition}[theorem]{Proposition}
\newtheorem{corollary}[theorem]{Corollary}
\newtheorem{definition}[theorem]{Definition}
\newtheorem{lemma}[theorem]{Lemma}
\newcommand{\Z}{\mathbb{Z}}
\newcommand{\Q}{\mathbb{Q}}
\newcommand{\R}{\mathbb{R}}
\newcommand{\C}{\mathbb{C}}
\newcommand{\N}{\mathbb{N}}
\newcommand{\LN}{\mathcal{L}}
\newcommand{\IN}{\mathcal{I}}
\newcommand{\ind}{\mathbf{1}}
\newcommand{\E}{\mathcal{E}}
\newcommand{\DD}{\mathbb{D}}
\newcommand{\Co}{\mathcal{C}}
\newcommand{\PP}{\mathbb{P}}
\newcommand{\Di}{\mathcal{D}}
\newcommand{\Dio}{\mathcal{D}^\circ}
\newcommand{\Dit}{\widetilde{\mathcal{D}}}
\newcommand{\Ds}{\mathcal{T}^{\mathrm{ext}}}
\newcommand{\D}{\widetilde{\mathcal{D}}^{\C^*}}
\newcommand{\G}{\mathfrak{G}}
\newcommand{\e}{\mathrm{e}}
\newcommand{\re}{\mathrm{Re}}
\newcommand{\im}{\mathrm{Im}}
\newcommand{\Arg}{\mathrm{Arg}}
\newcommand{\res}{\mathrm{Res}}
\newcommand{\Aut}{\mathrm{Aut}}
\newcommand{\Tr}{\mathrm{Tr}}
\newcommand{\ie}{{\it{i.$\,$e.\ }}}
\begin{document}
\title{On certain zeta integral: transformation formula}
\author{Milton Espinoza}
\subjclass[2010]{Primary 11M32, 11M35, Secondary 11R42, 11F20}
\keywords{Multiple zeta functions, zeta integrals, Kronecker limit formulas, special values, Lambert series, multiple gamma function}
\email{milton.espinoza@uv.cl}
\address{Instituto de Matem\'aticas, 
   Facultad de Ciencias, 
   Universidad de Valpara\'iso, 
   Gran Breta\~na 1091, 3er piso, Valpara\'iso,
   Chile}
\thanks{I am grateful for the generous support of CONICYT BECAS CHILE 74150071 and the Max-Planck-Institut f\"ur Mathematik}

\begin{abstract}
We introduce an ``$L$-function'' $\LN$ built up from the integral representation of the Barnes' multiple zeta function $\zeta$. Unlike the latter, $\LN$ is defined on a domain equipped with a non-trivial action of a group $G$. Although these two functions differ from each other, we can use $\LN$ to study $\zeta$. In fact, the transformation formula for $\LN$ under $G$-transformations provides us with a new perspective on the special values of both $\zeta$ and its $s$-derivative. 

In particular, we obtain Kronecker limit formulas for $\zeta$ when restricted to points fixed by elements of $G$. As an illustration of this principle, we evaluate certain generalized Lambert series at roots of unity, establishing pertinent algebraicity results. Also, we express the Barnes' multiple gamma function at roots of unity as a certain infinite product.

It should be mentioned that this work also considers twisted versions of $\zeta$.
\end{abstract}

\maketitle

\section*{Introduction}

Our starting point is the \textit{Barnes' multiple zeta function} defined via the Dirichlet series
\begin{align}\label{Z-series}
\zeta_N(s,w,a):=\sum_{m\in\N^N}(w+m\cdot a)^{-s}.
\end{align}
Here $N$ is a positive integer called \textit{the dimension of the zeta function}, and the sum runs over all the $N$-tuples of non-negative integers. The parameters $s$, $w$, and the entries $a_\ell$ of the $N$-dimensional (row) vector $a\in\C^N$ are complex numbers. We denote $x\cdot y$ the standard dot product between $x$ and $y$, \ie
\begin{align*}
x\cdot y:=x_1y_1+\dots +x_Ny_N \qquad\qquad (x,y\in\C^N).
\end{align*}
After choosing a suitable branch of the logarithm to define complex powers, it can be shown that $\zeta_N(s,w,a)$ converges absolutely whenever $\re(s)>N$, $\re(w)>0$, and $\re(a_\ell)>0$ for all $\ell\in\{1,\dots,N\}$. If we consider the tube-like domain 
\begin{align*}
\mathcal{T}_N^+=\big\{(w,a)\in\C\times\C^N \ \big| \ \re(w)>0, \ \re(a_\ell)>0, \ 1\leq \ell\leq N\big\},
\end{align*}
then $\zeta_N$ is a holomorphic function on the Cartesian product $\{s\in\C \ | \ \re(s)>N\}\times \mathcal{T}_N^+$,
and the map $s\mapsto \zeta_N(s,w,a)$ can be meromorphically continued to $\C$ for fixed $(w,a)\in \mathcal{T}_N^+$. 

Historically, $\zeta_N$ was introduced by Barnes \cite{Ba} at the beginning of the 20th century as a natural generalization of the Hurwitz zeta function. Later it was considered by Shintani \cite{Sh1} \cite{Sh2} who showed that it arises as a critical term in the evaluation of Hecke $L$-series at $s=1$. Also, a modern and more general treatment of $\zeta_N$ can be found in \cite{Ru} and \cite{Fr-Ru}. 

The analytic $s$-continuation of $\zeta_N(s,w,a)$ can be performed in two steps, as it is detailed in \cite[\S 2.4]{Hi}. First, one obtains an integral representation in the domain of absolute convergence of the series \eqref{Z-series}. As it has been the case of its younger avatars, this integral representation comes from applying the Mellin transform to some suitable test function, namely
\begin{equation*}
F_N(u,w,a):=\frac{\e^{-uw}}{(1-\e^{-ua_1})(1-\e^{-ua_2})\dots(1-\e^{-ua_N})} \qquad\qquad (u\in\C).
\end{equation*}
Then one converts the resulting integral into a contour one by means of the Hankel contour, which gives a meromorphic function on $\C$ having at most finitely many simple poles for each fixed $(w,a)\in \mathcal{T}_N^+$. In particular, the non-positive integers are regular, and we can evaluate $\zeta_N(s,w,a)$ at these points in closed form. The special values $\zeta_N(-k,w,a)$, $k\in\N$, are very nice functions of $(w,a)$ whose domains can be easily extended. They are polynomials in $w$ with coefficients in the field $\Q(a)$ generated by the entries of $a$, and furthermore they satisfy several symmetries that can be synthesized in a very simple transformation formula under the action of a group $G_N$ (defined below). The above suggests the following heuristic observation: $\zeta_N$ is actually a function of the three variables $s$, $w$, and $a$, but it gets into its best form only if one fixes either $s$ or $(w,a)$. Unfortunately, this is not good enough for some applications. For instance, if we wanted to study the $s$-derivative of $\zeta_N(s,w,a)$ at non-positive integers, we would also be interested in knowing the behavior of $\zeta_N(s,w,a)$ as $s$ varies around these points, and hence it would be desirable to keep track of any symmetry corresponding to the $G_N$-action on the variable $(w,a)$ even when $s$ is treated as another variable.

The aim of this article is to introduce a function $\LN_N(s,w,a)$ such that 
\begin{itemize}
\item the map $s\mapsto \LN_N(s,w,a)$ defines a meromorphic function on $\C$ for fixed $(w,a)$,
\item the variable $(w,a)$ ranges over a domain equipped with the action of $G_N$,
\item the transformation formula for $\LN_N(s,w,a)$ under $G_N$-transformations holds for any $s$, and
\item we can study the Barnes' multiple zeta function using $\LN_N$ and its symmetries.
\end{itemize}
In order to describe the group $G_N$ in consideration, let $\{\pm1\}^N$ be the $N$-ary Cartesian power of the multiplicative group of order 2, let $S_N$ be the symmetric group on $N$ elements, and let $\C^*$ be the multiplicative group of nonzero complex numbers. Then $G_N$ is isomorphic to the product $\big(\{\pm1\}^N\rtimes_\varphi S_N\big)\times\C^*$, where $\{\pm1\}^N\rtimes_\varphi S_N$ denotes the (outer) semidirect product with respect to the homomorphism
\begin{align*}
\varphi:S_N\to\Aut\big(\{\pm1\}^N\big), \qquad\varphi_\sigma(\varepsilon_1,\dots,\varepsilon_N)=(\varepsilon_{\sigma^{-1}(1)},\dots,\varepsilon_{\sigma^{-1}(N)}) \qquad (\sigma\in S_N).
\end{align*}
Moreover, we will also allow our functions to be \textit{twisted}, \ie we will introduce an extra parameter $\theta\in\R^{N}$ such that the functions $\LN_N(s,w,a,\theta)$ will handle Dirichlet series of the form 
\begin{align*}
\zeta_N(s,w,a,\theta):=\sum_{m\in\N^N}\e(m\cdot \theta)(w+m\cdot a)^{-s} \qquad\qquad \big(\re(s)>N\big).
\end{align*}
Here, and from now on, we write $\e(z):=\e^{2\pi iz}$ for all $z\in\C$. Thus all of our results will consider functions in this generality, the Barnes case being the non-twisted specialization $\theta\in\Z^{N}$.

We now briefly outline the steps in the construction of $\LN_N$. First we take the integral representation yielding the analytic continuation of $\zeta_N(s,w,a,\theta)$, and we note that it gives a nice function $\LN_N(s,w,a,\theta)$ of $s$, which is actually defined on a larger set $\Di_N$ of elements $(w,a,\theta)$ than the one considered for the sake of absolute convergence of the series representation. Furthermore, the group $\{\pm1\}^N\rtimes_\varphi S_N$ acts on $\Di_N$, and the corresponding transformation formula for $\LN_N(s,w,a,\theta)$ generalizes well-known symmetry relations satisfied by Bernoulli polynomials. Next, we extend $\Di_N$ to a domain $\Dit_N$ on which both $s\mapsto\LN_N(s,w,a,\theta)$ and the action of $\{\pm1\}^N\rtimes_\varphi S_N$ can be defined through a limiting process. In order to include also the action of $\C^*$, we take a suitable subset $\D_N$ of $\Dit_N$. Then the desired function follows by taking the restriction of $s\mapsto\LN_N(s,w,a,\theta)$ to $\D_N$. This function differs from $\zeta_N$, but it can be used to compute the latter because (i) they coincide in a certain part of their domains, and (ii) such part is, up to a well-controlled set of measure zero, a fundamental domain for the action of $\{\pm1\}^N$ on $\D_N$. Finally, the transformation formula for $\LN_N(s,w,a,\theta)$ under the action of the whole group $\big(\{\pm1\}^N\rtimes_\varphi S_N\big)\times\C^*$ provides us with a new perspective on the special values of both $\zeta$ and its $s$-derivative. As a consequence, we evaluate certain series of the form
\begin{align*}
\sum_n a_n\cdot\Big(\frac{q_1^n}{1-\xi_1q_1^n}\Big)\dots\Big(\frac{q_m^n}{1-\xi_mq_m^n}\Big),
\end{align*}
establishing algebraicity results (Corollaries \ref{Cor.Ex.1} and \ref{Cor.Ex.2}), and we express the Barnes' multiple gamma function 
\begin{align}\label{Mult.Gamma}
\Gamma_N(w,a):=\mathrm{exp}\left(\frac{d}{ds}\zeta_N(s,w,a)\Big|_{s=0}\right) \qquad\qquad \big((w,a)\in \mathcal{T}_N^+\big)
\end{align}
at roots of unity as an infinite product (Corollary~\ref{Cor.Ex.2}).

In short, our approach exploits properties and relations offered by the test function $F_N$, and how the integration process yielding the analytic continuation of the Barnes' multiple zeta function interacts with them. This can be seen as part of a general philosophy leading to fruitful and diverse applications. For instance, Hirose and Sato \cite{Hi-Sa} gave functional equations for \textit{normalized Shintani $L$-functions}, \ie zeta-integrals attached to certain twisted Dirichlet series parameterized by real invertible matrices, and although they have been guided by the same idea, their results are quite different in nature.

This article is divided into four sections. The first one is devoted to the statement of the main results. We start by summarizing the analytic continuation of $\zeta_N(s,w,a,\theta)$ in both variables $s$ and $(w,a,\theta)$, and then we introduce the required notation in order to establish finally the most important outcomes of the paper. Except for Corollaries \ref{Cor.Ex.1} and \ref{Cor.Ex.2}, no proofs are given here, as we include them instead in the following sections. More precisely, in the second section we detail the action of $G_N$ on $\D_N$. In the third one we study the integral representation of $\LN_N$, while in the fourth one we elaborate on the extension of $\LN_N$ to $\Dit_N$ and its transformation formula corresponding to the $G_N$-action on $\D_N$. Also, the reader will find a summary of the most relevant domains considered in this work at the end of the article.

\section{Main results}

\subsection{Analytic continuation of $\zeta_N$}

We briefly sum up some well-known features of the analytic continuation of the Barnes' multiple zeta function. Let $N$ be a positive integer. In order to take the parameter $\theta\in\R^N$ into account, we set
\begin{align*} 
\mathcal{T}_N^+:=\big\{(w,a,\theta)\in\C\times\C^N\times\R^N \, \big| \, \re(w)>0, \ \re(a_\ell)>0, \ 1\leq \ell\leq N\big\}.
\end{align*}
Since this domain is a natural generalization of the $\mathcal{T}_N^+$ given in the introduction, we will keep this notation from now on. Also, we define $H_N:=\{s\in\C\,|\,\re(s)>N\}$ and $p$ as the projection of $\C\times\C^N\times\R^N$ onto the first two coordinates, \ie
\begin{align}
\label{proj.p} p:\C\times\C^{N}\times\R^{N}\to \C\times\C^{N}, \qquad\qquad p(w,a,\theta):=(w,a).
\end{align}

Using the principal branch of the logarithm to define complex powers, we consider the (possibly twisted) Barnes' multiple zeta function  
\begin{align}
& \label{Z-twistedseries} \zeta_N:H_N\times \mathcal{T}_N^+\to \C, \qquad\qquad \zeta_N(s,w,a,\theta):=\sum_{m\in\N^N}\e(m\cdot \theta)(w+m\cdot a)^{-s}.
\end{align}
It is well-defined and holomorphic as a function of $(s,w,a)\in H_N\times p(\mathcal{T}_N^+)$ for each fixed $\theta\in\R^{N}$. Using the test function
\begin{align}\label{F_N}
F_N(u,w,a,\theta):=\e^{-uw}\prod_{\ell=1}^N\big(1-\e(\theta_\ell)\e^{-ua_\ell}\big)^{-1}, 
\end{align}
it also admits the integral representation 
\begin{align}
\nonumber \zeta_N(s,w,a,\theta)=\frac{1}{\Gamma(s)}\int_0^\infty F_N(u,w,a,\theta)u^{s-1}du, \qquad (s,w,a,\theta)\in H_N\times \mathcal{T}_N^+. 
\end{align}
Then we can transform the above integral into a contour one by means of the Hankel contour $C(\epsilon)$ for sufficiently small $\epsilon>0$. Recall that $C(\epsilon)$ is defined as the counterclockwise oriented path consisting of the interval $[\epsilon,+\infty)$, and the circle of radius $\epsilon$ centered at the origin followed by the same interval. Hence the analytic $s$-continuation of $\zeta_N$ is given by
\begin{align}
\label{anal.cont.zeta} \zeta_N(s,w,a,\theta)=\frac{1}{\Gamma(s)(\e(s)-1)}\int_{C(\epsilon)}F_N(u,w,a,\theta)u^{s-1}du, \qquad (w,a,\theta)\in \mathcal{T}_N^+.
\end{align}
It can be shown that it is independent of $\epsilon$, and that the integral defines an entire function of $s$ on $\C$ having zeros at the integers greater than $N$. Therefore we have an extension $\zeta_N:(\C\smallsetminus \{1,\dots,N\})\times \mathcal{T}_N^+\to\C$,
for which the elements in $\{1,\dots,N\}$ are, at most, simple poles for each fixed $(w,a,\theta)\in \mathcal{T}_N^+$. 

Now here is the crux: to extend the domain of $\zeta_N$ with respect to $\mathcal{T}_N^+$, we can proceed by considering either the series or the integral representation. The former way goes back to Barnes \cite{Ba} and it comes from noticing that some rotations of the parameters $w$ and $a$ do not affect the absolute convergence of the series in \eqref{Z-twistedseries}. Indeed, for every angle $\omega\in(-\pi/2,\pi/2)$, define
\begin{align*}
& \mathcal{T}_N^+(\omega):=\big\{(w,a,\theta)\in\C\times\C^{N}\times\R^{N} \, | \, (\e^{i\omega}w,\e^{i\omega}a,\theta)\in \mathcal{T}_N^+\big\},
\end{align*}
and then define the function
\begin{align*}
& \zeta_N^{\omega}:(\C\smallsetminus \{1,\dots,N\})\times \mathcal{T}_N^+(\omega)\to\C, \qquad
\zeta_N^{\omega}(s,w,a,\theta):=\e^{is\omega}\zeta_N(s,\e^{i\omega}w,\e^{i\omega}a, \theta).
\end{align*}
Writing
\begin{equation*}
\Ds_N:=\bigcup_{\omega\in(-\pi/2,\pi/2)}\mathcal{T}_N^+(\omega),
\end{equation*}
we have that $\Ds_N$ is simply connected, contains $\mathcal{T}_N^+$, and permits the analytic continuation
\begin{align}
\nonumber \zeta_N:(\C\smallsetminus \{1,\dots,N\})\times \Ds_N\to\C, \qquad & \zeta_N(s,w,a,\theta):=\zeta_N^{\omega}(s,w,a,\theta),\\ 
\label{series anal. cont.} & (w,a,\theta)\in \mathcal{T}_N^+(\omega), \ \mathrm{some} \ \omega\in(-\pi/2,\pi/2),
\end{align}
as it is detailed in \cite[\S 6]{Fr-Ru}. One remarkable trait of the above $\zeta_N(s,w,a,\theta)$ is that it is a holomorphic function of $(s,w,a)\in (\C\smallsetminus \{1,\dots,N\})\times p(\Ds_N)$ for each fixed $\theta\in\R^{N}$. However, $\Ds_N$ has the drawback of being \textit{non-symmetric}, in the sense that it compels us to work with column matrices $a$ having entries in a half-plane. Furthermore, some half-planes have had to be dismissed in order to avoid multivaluedness. In this article, we extend the domain of $\zeta_N$ with respect to $\mathcal{T}_N^+$ by considering the integral representation \eqref{anal.cont.zeta}.

\subsection{Statement of the main results} Let us start by fixing some notation. Let $\R_+$ be the set of positive real numbers. Define the convex cone 
\begin{align}\label{Co}
\Co:=\big\{z\in\C\,\big|\,\re(z)>0 \quad \text{or} \quad z\in i\cdot\R_+\big\},
\end{align}
and note that its interior $\Co^\circ$ amounts to the right half-plane and that $\Co\cup\{0\}\cup-\Co=\C$.  

Let $N$ be a positive integer, set $P_N:=\{1,2,\dots,N\}$, and let $\PP_N$ be the power set of $P_N$. Since every $v\in\C^{N}$ is actually a function $v:P_N\to\C$, the inverse image $v^{-1}[S]\in\PP_N$ of $S$ under $v$ is defined for any $S\subseteq\C$. Then we define 
\begin{align}\label{good.domain}
&\DD_N:=\big\{(w,a,\theta)\in\C\times \C^{N}\times \R^{N} \, \big| \, a^{-1}[0]\subseteq\theta^{-1}[\R\smallsetminus\Z]\big\}.
\end{align}
Also, we define the \textit{trace function}
\begin{align}\label{trace}
\Tr:\C^N\times \PP_N\to\C,\qquad\qquad \Tr(v,\Lambda):=\sum_{\ell\in\Lambda}v_\ell.
\end{align}

Let $\mathcal{F}(\DD_N,\C)$ be the set of all complex-valued functions on $\DD_N$, and consider the distinguished element
\begin{align}\label{pi}
\pi:\DD_N\to\C,\qquad\qquad\pi(w,a,\theta):=w-\Tr(a,a^{-1}[-\Co]),
\end{align}
which is discontinuous at points $(w,a,\theta)$ where some $a_\ell$ is a nonzero purely imaginary complex number. Let $\Aut(\DD_N)$ be the group of all homeomorphisms of $\DD_N$ onto itself. Then $\Aut(\DD_N)$ acts on $\mathcal{F}(\DD_N,\C)$ by composition on the right, and we can consider the stabilizer subgroup
\begin{align*}
\Aut(\DD_N)_\pi:=\{g\in \Aut(\DD_N)\,|\,\pi g=\pi\}
\end{align*}
of $\Aut(\DD_N)$ with respect to $\pi$. The above induces an action of $\Aut(\DD_N)_\pi$ on the set $\mathcal{F}(\pi^{-1}[\Co],\C)$ of all complex-valued functions on $\pi^{-1}[\Co]$. From now on we set
\begin{align}\label{Dit}
\Dit_N:=\pi^{-1}[\Co]=\big\{(w,a,\theta)\in\DD_N\,|\,\pi(w,a,\theta)\in\Co\big\}.
\end{align}

Now we describe a finite subgroup of $\Aut(\DD_N)_\pi$ explicitly. For each $\Lambda\in\PP_N$, we denote $d(\Lambda)$ the $N\times N$ diagonal matrix whose $(\ell,\ell)$-entry equals either $-1$ if $\ell\in\Lambda$, or $1$ otherwise. Then we define the function
\begin{align}
T_\Lambda=T_{\Lambda,N}:\DD_N\to \DD_N, \qquad
  T_\Lambda(w,a,\theta):=\big(w-\Tr(a,\Lambda) \, , \,  ad(\Lambda) \, , \, \theta d(\Lambda)\big).
  \label{T_Lambda} 
\end{align} 
Next, for each $\sigma\in S_N$, consider the corresponding permutation matrix $r(\sigma)$, \ie the $N\times N$ matrix whose $\ell$-th column equals the $\sigma^{-1}(\ell)$-th column of the identity matrix of size $N$. Then we define the function 
\begin{align}\label{R_sigma}
R_\sigma=R_{\sigma,N}:\DD_N\to\DD_N, \qquad\qquad R_\sigma(w,a,\theta):=\big(w \, , \, a r(\sigma) \, , \, \theta r(\sigma)\big).
\end{align}

\begin{proposition}\label{prop.R_sigma.T_Lambda}
The $T_\Lambda$ and $R_\sigma$ $(\Lambda\in\PP_N,\,\sigma\in S_N)$ defined above lie in $\Aut(\DD_N)_\pi$. Furthermore, if we let  $\mathfrak{T}_N$ and $\mathfrak{R}_N$ be the groups generated by the $T_\Lambda$ and $R_\sigma$ $(\Lambda\in\PP_N,\,\sigma\in S_N)$ respectively, then the set $\mathfrak{T}_N\mathfrak{R}_N$ equipped with the operation
\begin{align*}
T_{\Lambda_1}R_{\sigma_1}\cdot T_{\Lambda_2}R_{\sigma_2}=T_{\Lambda_1\oplus\sigma_1(\Lambda_2)}R_{\sigma_1\sigma_2}
\qquad\qquad (\Lambda_i\in\PP_N,\,\sigma_i\in S_N,\,  i=1,2)
\end{align*}
is a group isomorphic to the outer semidirect product $\{\pm1\}^N\rtimes_\varphi S_N$ with respect to
\begin{align*}
\varphi:S_N\to\Aut\big(\{\pm1\}^N\big), \qquad\varphi_\sigma(\varepsilon_1,\dots,\varepsilon_N)=(\varepsilon_{\sigma^{-1}(1)},\dots,\varepsilon_{\sigma^{-1}(N)}).
\end{align*}
Here and from now on the symbol $\oplus$ denotes the symmetric difference of sets.
\end{proposition}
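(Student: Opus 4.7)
The plan is to split the statement into three tasks: (a) each $T_\Lambda$ and each $R_\sigma$ belongs to $\Aut(\DD_N)_\pi$; (b) the composition $T_{\Lambda_1}R_{\sigma_1}\circ T_{\Lambda_2}R_{\sigma_2}$ equals $T_{\Lambda_1\oplus\sigma_1(\Lambda_2)}R_{\sigma_1\sigma_2}$; (c) the resulting set is, as a group, isomorphic to $\{\pm1\}^N\rtimes_\varphi S_N$.

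For (a), stability of $\DD_N$ under $T_\Lambda$ and $R_\sigma$ is immediate since negating or permuting coordinates preserves the condition $a^{-1}[0]\subseteq\theta^{-1}[\R\smallsetminus\Z]$: $\R\smallsetminus\Z$ is symmetric under negation, and the zero-set of $a$ is merely relabeled. Continuity is clear, and each map has an explicit inverse, namely $T_\Lambda^{-1}=T_\Lambda$ (using $d(\Lambda)^2=I$ together with $\Tr(ad(\Lambda),\Lambda)=-\Tr(a,\Lambda)$) and $R_\sigma^{-1}=R_{\sigma^{-1}}$. The identity $\pi R_\sigma=\pi$ is simply invariance of $\Tr(a,a^{-1}[-\Co])$ under reindexing. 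The main work is $\pi T_\Lambda=\pi$. I would partition $P_N$ into $A:=a^{-1}[\Co]$, $B:=a^{-1}[-\Co]$, and $Z:=a^{-1}[\{0\}]$, a disjoint cover because $\Co\cup\{0\}\cup-\Co=\C$. Then
\[ (ad(\Lambda))^{-1}[-\Co]=(\Lambda\cap A)\sqcup((P_N\smallsetminus\Lambda)\cap B), \]
and after expanding $\Tr(a,\Lambda)$ along $A,B,Z$ the contributions from $\Lambda\cap A$ cancel and those from $\Lambda\cap B$ combine with $B\smallsetminus\Lambda$ to yield $\Tr(a,B)$, which is exactly $\Tr(a,a^{-1}[-\Co])$.

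For (b), the composition law rests on two auxiliary identities: $T_{\Lambda_1}T_{\Lambda_2}=T_{\Lambda_1\oplus\Lambda_2}$ and $R_\sigma T_\Lambda=T_{\sigma(\Lambda)}R_\sigma$. The first follows from $d(\Lambda_1)d(\Lambda_2)=d(\Lambda_1\oplus\Lambda_2)$ together with the arithmetic equality $\Tr(a,\Lambda_2)+\Tr(ad(\Lambda_2),\Lambda_1)=\Tr(a,\Lambda_1\oplus\Lambda_2)$, which I would verify by decomposing $\Lambda_1$ as $(\Lambda_1\cap\Lambda_2)\sqcup(\Lambda_1\smallsetminus\Lambda_2)$. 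The second reduces to the matrix identity $d(\Lambda)r(\sigma)=r(\sigma)d(\sigma^{-1}(\Lambda))$, checked entrywise under the convention $(ar(\sigma))_\ell=a_{\sigma^{-1}(\ell)}$. Chaining these gives
\[ T_{\Lambda_1}R_{\sigma_1}T_{\Lambda_2}R_{\sigma_2}=T_{\Lambda_1}T_{\sigma_1(\Lambda_2)}R_{\sigma_1}R_{\sigma_2}=T_{\Lambda_1\oplus\sigma_1(\Lambda_2)}R_{\sigma_1\sigma_2}, \]
so $\mathfrak{T}_N\mathfrak{R}_N$ is closed under composition; the identity is $T_\emptyset R_{\mathrm{id}}$, and inverses are $T_\Lambda R_\sigma\mapsto T_{\sigma^{-1}(\Lambda)}R_{\sigma^{-1}}$.

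For (c), I would introduce $\Phi\colon\{\pm1\}^N\rtimes_\varphi S_N\to\mathfrak{T}_N\mathfrak{R}_N$ by $\Phi(\varepsilon,\sigma):=T_{\Lambda(\varepsilon)}R_\sigma$, where $\Lambda(\varepsilon):=\{\ell:\varepsilon_\ell=-1\}$. The book-keeping identities $\Lambda(\varepsilon_1\cdot\varepsilon_2)=\Lambda(\varepsilon_1)\oplus\Lambda(\varepsilon_2)$ and $\Lambda(\varphi_{\sigma_1}(\varepsilon_2))=\sigma_1(\Lambda(\varepsilon_2))$ show that $\Phi$ intertwines the semidirect-product multiplication with the rule from (b), so $\Phi$ is a homomorphism. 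Surjectivity holds by construction; injectivity follows by testing both sides on a single $(w,a,\theta)\in\DD_N$ whose entries of $a$ are distinct nonzero real numbers, which recovers $\Lambda$ from the sign pattern of $ad(\Lambda)r(\sigma)$ and $\sigma$ from its permutation pattern. The principal obstacle in the whole argument is the case analysis establishing $\pi T_\Lambda=\pi$; everything else amounts to managing the commutation relations between the matrices $d(\Lambda)$ and $r(\sigma)$ and the induced effect on the trace function.
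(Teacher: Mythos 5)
Your proposal is correct and follows essentially the same route as the paper: the same partition of $P_N$ into $a^{-1}[\Co]$, $a^{-1}[-\Co]$, $a^{-1}[0]$, the same identity $(ad(\Lambda))^{-1}[-\Co]=\big(a^{-1}[-\Co]\smallsetminus\Lambda\big)\cup\big(a^{-1}[\Co]\cap\Lambda\big)$ driving $\pi T_\Lambda=\pi$, the same trace computation for $T_{\Lambda_1}T_{\Lambda_2}=T_{\Lambda_1\oplus\Lambda_2}$, and the same conjugation relation $R_\sigma T_\Lambda R_\sigma^{-1}=T_{\sigma(\Lambda)}$ (the paper leaves the explicit isomorphism $\Phi$ implicit, but your version adds nothing structurally new). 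One slip: under your own convention $(ar(\sigma))_\ell=a_{\sigma^{-1}(\ell)}$, the auxiliary matrix identity must read $d(\Lambda)r(\sigma)=r(\sigma)d(\sigma(\Lambda))$, not $r(\sigma)d(\sigma^{-1}(\Lambda))$; the commutation relation $R_\sigma T_\Lambda=T_{\sigma(\Lambda)}R_\sigma$ and the composition law you derive from it are nevertheless stated correctly, so this is only an indexing typo to fix, not a gap.
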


In order to include the action of $\C^*$, we define the following transformations. For each $\alpha\in\C^*$, consider the function
\begin{align}\label{M_alpha}
M_\alpha=M_{\alpha,N}:\DD_N\to\DD_N, \qquad\qquad M_\alpha(w,a,\theta):=(\alpha w,\alpha a, \theta),
\end{align}
which is clearly a homeomorphism. The group $\mathfrak{M}_N$ generated by the $M_\alpha$ ($\alpha\in\C^*$) is isomorphic to $\C^*$ in a natural way. Unlike the previous cases, $\mathfrak{M}_N$ is not contained in $\Aut(\DD_N)_\pi$, thus we consider instead the group
\begin{align*}
\G_N:=\big[\Aut(\DD_N)_\pi\cap \mathrm{N}(\mathfrak{M}_N)\big]\mathfrak{M}_N,
\end{align*}
where $\mathrm{N}(\mathfrak{M}_N)$ denotes the normalizer of $\mathfrak{M}_N$ in $\Aut(\DD_N)$. Hence $\G_N$ acts on the set $\mathcal{F}(\D_N,\C)$ by composition on the right, where
\begin{align}\label{D.final}
\D_N:=\bigcap_{\alpha\in\C^*}(\pi M_\alpha)^{-1}[\Co].
\end{align}

\begin{proposition}\label{prop.R_sigma.T_Lambda.M_alpha}
The $T_\Lambda$, $R_\sigma$, and $M_\alpha$ $(\Lambda\in\PP_N,\,\sigma\in S_N, \,\alpha\in\C^*)$ defined above lie in $\G_N$. Furthermore, the set $G_N:=\mathfrak{T}_N\mathfrak{R}_N\mathfrak{M}_N$ equipped with the operation
\begin{align*}
T_{\Lambda_1}R_{\sigma_1}M_{\alpha_1}\cdot T_{\Lambda_2}R_{\sigma_2}M_{\alpha_2}=T_{\Lambda_1\oplus\sigma_1(\Lambda_2)}R_{\sigma_1\sigma_2}M_{\alpha_1\alpha_2},
\end{align*}
for any $\Lambda_i\in\PP_N$, $\sigma_i\in S_N$, and $\alpha_i\in\C^*$ $(i=1,2)$, is a subgroup of $\G_N$ isomorphic to the product $\big(\{\pm1\}^N\rtimes_\varphi S_N\big)\times\C^*$.
\end{proposition}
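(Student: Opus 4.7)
The plan breaks into three steps: show the three families $T_\Lambda,R_\sigma,M_\alpha$ all lie in $\G_N$; derive the composition law from the commutativity of $M_\alpha$ with $T_\Lambda$ and $R_\sigma$; and upgrade this to the stated isomorphism.

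Step 1 (membership in $\G_N$). Proposition~\ref{prop.R_sigma.T_Lambda} already places $T_\Lambda,R_\sigma\in\Aut(\DD_N)_\pi$, and $M_\alpha\in\mathfrak{M}_N\subseteq\G_N$ by definition, so what remains is $T_\Lambda,R_\sigma\in\mathrm{N}(\mathfrak{M}_N)$. I would establish the stronger fact that $M_\alpha$ in fact \emph{commutes} with both $T_\Lambda$ and $R_\sigma$ by a direct unwinding of \eqref{T_Lambda}, \eqref{R_sigma}, and \eqref{M_alpha}, using only the $\C$-linearity of $v\mapsto\Tr(v,\Lambda)$:
\begin{align*}
M_\alpha T_\Lambda(w,a,\theta) &= \big(\alpha w-\alpha\Tr(a,\Lambda),\ \alpha a\, d(\Lambda),\ \theta d(\Lambda)\big) = T_\Lambda M_\alpha(w,a,\theta),\\
M_\alpha R_\sigma(w,a,\theta) &= \big(\alpha w,\ \alpha a\, r(\sigma),\ \theta r(\sigma)\big) = R_\sigma M_\alpha(w,a,\theta).
\end{align*}

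Step 2 (closure and composition law). The commutativity just obtained lets us push every $M_\alpha$ to the right in any word built from the three families, so
\begin{align*}
(T_{\Lambda_1}R_{\sigma_1}M_{\alpha_1})(T_{\Lambda_2}R_{\sigma_2}M_{\alpha_2})=(T_{\Lambda_1}R_{\sigma_1}T_{\Lambda_2}R_{\sigma_2})(M_{\alpha_1}M_{\alpha_2}).
\end{align*}
Proposition~\ref{prop.R_sigma.T_Lambda} collapses the first bracket to $T_{\Lambda_1\oplus\sigma_1(\Lambda_2)}R_{\sigma_1\sigma_2}$, and $M_{\alpha_1}M_{\alpha_2}=M_{\alpha_1\alpha_2}$ is clear; this yields both the stated formula and the closure of $G_N$ under composition. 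Associativity is inherited from $\Aut(\DD_N)$, the element $T_\emptyset R_{\mathrm{id}}M_1$ is the identity map, and the same formula provides an explicit inverse for each element, so $G_N$ is a subgroup of $\G_N$.

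Step 3 (isomorphism). Since $\mathfrak{T}_N\mathfrak{R}_N$ and $\mathfrak{M}_N$ commute element-wise inside $G_N$, the map $(TR,M)\mapsto TRM$ is a surjective homomorphism $\mathfrak{T}_N\mathfrak{R}_N\times\mathfrak{M}_N\to G_N$. Its injectivity—the only delicate piece, I expect—is equivalent to $\mathfrak{T}_N\mathfrak{R}_N\cap\mathfrak{M}_N=\{\mathrm{id}\}$. I would verify this by testing an identity $T_\Lambda R_\sigma=M_\alpha$ on points $(w,0,\theta)\in\DD_N$ with $\theta\in(\R\smallsetminus\Z)^N$ free: the action on the third coordinate is $\theta\mapsto\theta r(\sigma)d(\Lambda)$ on the left and trivial on the right, forcing $r(\sigma)d(\Lambda)=I$ as a matrix, hence $\sigma=\mathrm{id}$ and $\Lambda=\emptyset$; inspection of the first coordinate then yields $\alpha=1$. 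Combined with the isomorphism $\mathfrak{T}_N\mathfrak{R}_N\cong\{\pm1\}^N\rtimes_\varphi S_N$ from Proposition~\ref{prop.R_sigma.T_Lambda} and the obvious $\mathfrak{M}_N\cong\C^*$, this produces the required identification $G_N\cong(\{\pm1\}^N\rtimes_\varphi S_N)\times\C^*$.
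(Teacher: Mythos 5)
Your proof is correct and follows essentially the same route as the paper: the element-wise commutativity of $M_\alpha$ with $T_\Lambda$ and $R_\sigma$ is precisely what Lemmas~\ref{lemma.R_sigma} and~\ref{lemma.T_Lambda} record (containment in $\mathrm{C}(\mathfrak{M}_N)$), and the structure of $\mathfrak{T}_N\mathfrak{R}_N$ is delegated to Proposition~\ref{prop.R_sigma.T_Lambda} exactly as the paper does. Your explicit check that $\mathfrak{T}_N\mathfrak{R}_N\cap\mathfrak{M}_N=\{\mathrm{id}\}$ (via the action on the $\theta$-coordinate) is a detail the paper leaves implicit, and you carry it out correctly.
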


We know that $G_N$ acts on $\DD_N$ by homeomorphisms, thus it seems natural to ask about the sets 
\begin{align*}
\DD_N^g:=\big\{(w,a,\theta)\in\DD_N\,\big|\,g(w,a,\theta)=(w,a,\theta)\big\} \qquad\qquad (g\in G_N)
\end{align*}
of points fixed by elements of $G_N$. In fact, the next proposition shows that these sets can be characterized in terms of eigenspaces of certain matrices. For any $N\times N$ complex matrix $A$ and any $\lambda\in\C$, we denote $E_\lambda[A]:=\{v\in\C^N\,|\,vA=\lambda v\}$ the eigenspace of $A$ associated with $\lambda$. Note that $E_\lambda[A]$ is non-trivial if and only if $\lambda$ is an eigenvalue of $A$. 

\begin{proposition}\label{prop.fixed.points}
Let $\Lambda\in\PP_N$, $\sigma\in S_N$, and $\alpha\in\C^*$. Set $g:=T_\Lambda R_\sigma M_\alpha$ and $A:=r(\sigma)d(\Lambda)$. 
\begin{enumerate}[$(i)$]
\item\label{prop.fixed.points.alpha=1} Suppose that $\alpha=1$. Then 
\begin{align*}
\DD_N^g=\big\{(w,a,\theta)\in\DD_N\,\big|\, \Tr(a,\Lambda)=0 \quad \text{and} \quad a, \theta\in E_1[A]\big\}.
\end{align*}
Furthermore, if $(w,a,\theta)\in\DD_N^g$, then $\Tr(\theta,\Lambda)=0$ and $|\Lambda|$ is even.
\item\label{prop.fixed.points.any.alpha} Suppose that $\alpha\not=1$. Then 
\begin{align*}
\DD_N^g=\Big\{(w,a,\theta)\in\DD_N\,\Big|\,w=\frac{1}{2}\Tr(a,P_N), \quad a\in E_{\alpha^{-1}}[A], \quad \text{and} \quad \theta\in E_1[A]\Big\}
\end{align*}
and we have the inclusion $\DD_N^g\subseteq \D_N$. Furthermore, if there exists $(w,a,\theta)\in\DD_N^g$ with $a\not=0$, then $\alpha$ is a root of unity.
\end{enumerate} 
\end{proposition}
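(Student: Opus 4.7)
The plan is to convert the single condition $g(w,a,\theta)=(w,a,\theta)$ into coordinate-wise equations in terms of the matrix $A=r(\sigma)d(\Lambda)$, solve them, and then extract the extra consequences of (i) by analyzing $\sigma$-orbits in $P_N$. A direct unwinding of the definitions gives
\begin{align*}
g(w,a,\theta)=\bigl(\alpha w-\alpha\Tr(ar(\sigma),\Lambda),\ \alpha aA,\ \theta A\bigr),
\end{align*}
so the system decouples into $aA=\alpha^{-1}a$ (hence $a\in E_{\alpha^{-1}}[A]$), $\theta A=\theta$ (hence $\theta\in E_1[A]$), and a scalar equation for $w$. Setting $\varepsilon_\ell=-1$ for $\ell\in\Lambda$ and $+1$ otherwise, $aA=\alpha^{-1}a$ unwinds to $(ar(\sigma))_\ell=\alpha^{-1}\varepsilon_\ell a_\ell$, whence $\Tr(ar(\sigma),\Lambda)=-\alpha^{-1}\Tr(a,\Lambda)$. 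In case~(i) this reduces the $w$-equation to $\Tr(a,\Lambda)=0$, as claimed. In case~(ii) it becomes $(1-\alpha)w=\Tr(a,\Lambda)$; summing the coordinate identity over all $\ell\in P_N$ produces the further relation $\Tr(a,P_N)=\alpha^{-1}\bigl(\Tr(a,P_N)-2\Tr(a,\Lambda)\bigr)$, which rearranges to $\Tr(a,\Lambda)=\tfrac{1-\alpha}{2}\Tr(a,P_N)$, and substitution yields $w=\tfrac12\Tr(a,P_N)$.

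For the secondary claims in (i), I would work orbit-by-orbit under $\sigma$. On any $\sigma$-orbit $O=\{\ell_0,\dots,\ell_{k-1}\}$ with $\ell_{j+1}=\sigma^{-1}(\ell_j)$, the relation $\theta A=\theta$ unfolds to $\theta_{\ell_{j+1}}=\varepsilon_{\ell_j}\theta_{\ell_j}$, forcing $\theta_{\ell_j}=\tau_j\theta_{\ell_0}$ where $\tau_j:=\prod_{i<j}\varepsilon_{\ell_i}$. Closing the loop makes $\tau_k=1$ whenever $\theta_{\ell_0}\ne 0$, i.e.\ $|\Lambda\cap O|$ is even; ordering $\Lambda\cap O=\{\ell_{j_1},\dots,\ell_{j_{2m}}\}$ then gives $\tau_{j_i}=(-1)^{i-1}$, so $\sum_{\ell_j\in\Lambda}\tau_j=0$ and $\Tr(\theta,\Lambda\cap O)=0$. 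If instead $\theta|_O\equiv 0$, the defining condition of $\DD_N$ (each $\theta_\ell=0\in\Z$ forces $a_\ell\ne 0$) makes every $a_\ell$ on $O$ nonzero, and running the same argument on $a$ using $aA=a$ again produces $|\Lambda\cap O|$ even. Summing over orbits yields $|\Lambda|$ even and $\Tr(\theta,\Lambda)=0$.

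For part~(ii), the root-of-unity claim is immediate: since $A\in\mathfrak{T}_N\mathfrak{R}_N\cong\{\pm1\}^N\rtimes_\varphi S_N$ has finite order $n$, and any nonzero $a$ is an eigenvector of $A$ with eigenvalue $\alpha^{-1}$, we get $\alpha^{-n}=1$. For the inclusion $\DD_N^g\subseteq\D_N$, I would use $w=\tfrac12\Tr(a,P_N)$ to compute, for any $\beta\in\C^*$,
\begin{align*}
\pi\bigl(M_\beta(w,a,\theta)\bigr)=\tfrac12\Tr\bigl(\beta a,(\beta a)^{-1}[\Co]\bigr)-\tfrac12\Tr\bigl(\beta a,(\beta a)^{-1}[-\Co]\bigr),
\end{align*}
by splitting $\beta w=\tfrac12\Tr(\beta a,P_N)$ along $P_N=(\beta a)^{-1}[\Co]\sqcup(\beta a)^{-1}[\{0\}]\sqcup(\beta a)^{-1}[-\Co]$. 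Since $\Co$ is closed under addition and $-(-\Co)=\Co$, both halves lie in $\Co\cup\{0\}$ and their sum falls in $\Co$. The main obstacle will be the orbit-level bookkeeping in~(i): the hypothesis $a^{-1}[0]\subseteq\theta^{-1}[\R\smallsetminus\Z]$ defining $\DD_N$ must be used delicately to transfer the parity and trace-vanishing conclusions between the $a$- and $\theta$-components on every orbit where one of them happens to degenerate.
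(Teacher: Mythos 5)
Your proposal is correct, and for the two set-theoretic characterizations it follows essentially the same route as the paper: unwind $g(w,a,\theta)=(w,a,\theta)$ into the coordinate relations $(ar(\sigma))_\ell=\alpha^{-1}\varepsilon_\ell a_\ell$, $\theta_\ell=\varepsilon_\ell\theta_{\sigma^{-1}(\ell)}$, deduce $\Tr(ar(\sigma),\Lambda)=-\alpha^{-1}\Tr(a,\Lambda)$, and in case $\alpha\neq1$ sum over $P_N$ to get $\Tr(a,\Lambda)=\tfrac{1-\alpha}{2}\Tr(a,P_N)$ and hence $w=\tfrac12\Tr(a,P_N)$; your inclusion $\DD_N^g\subseteq\D_N$ is the paper's argument for $\D_{N,\mathrm{Pol}}\subseteq\D_N$ specialized to $x=(\tfrac12,\dots,\tfrac12)$, and the root-of-unity claim is the same finite-order observation. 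The genuine divergence is the ``furthermore'' of part (i): the paper proves $|\Lambda|$ even and $\Tr(\theta,\Lambda)=0$ analytically, by applying the transformation formula of Proposition \ref{T_NR_N.trans.form.}~\eqref{T_NR_N.trans.form.trans.} at the perturbed points $(w,a,c\theta)$ and forcing $J_g(c\theta)=1$ for $c$ near $1$, whereas you argue purely combinatorially, orbit by orbit under $\sigma$, using the telescoping signs $\tau_j$ and the defining condition of $\DD_N$ to pass from $\theta$ to $a$ on orbits where $\theta$ vanishes. Your route is more elementary and self-contained; it also sidesteps the domain hypotheses implicit in the paper's argument (one needs $(w,a,c\theta)\in\Di_N$ and $\LN_N(\cdot,w,a,c\theta)\not\equiv0$ to cancel the factor, which is not automatic for an arbitrary point of $\DD_N^g$), and it yields the sharper orbit-wise statement $\Tr(\theta,\Lambda\cap O)=0$. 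One caveat you inherit from the paper rather than introduce: when $a=0$ (which can occur in $\DD_N^g$ for $\alpha\neq1$, e.g.\ when $\alpha^{-1}$ is not an eigenvalue of $A$), the two ``halves'' in your computation of $\pi(M_\beta(w,a,\theta))$ are both zero and their sum is $0\notin\Co$, so the inclusion $\DD_N^g\subseteq\D_N$ really requires $a\neq0$; the paper's own proof via $\D_{N,\mathrm{Pol}}$ has exactly the same degenerate case.
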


Now we address the extension of $\zeta_N:(\C\smallsetminus P_N)\times \mathcal{T}_N^+\to\C$ to the domain $(\C\smallsetminus P_N)\times \Dit_N$ by using \eqref{anal.cont.zeta}. This is not an analytic continuation in the variable $(s,w,a)$, but the resulting extension is actually holomorphic up to a well-controlled set of measure zero. 

Note that the size of the ``sufficiently small $\epsilon>0$'' considered in \eqref{anal.cont.zeta} depends on $(w,a,\theta)$, and that there are infinitely many choices of it for each such triple. To control this situation better, we record the following definition. 

\begin{definition}\label{Epsilon}
Let $\E=\E_N$ be the set of all functions $\epsilon:\DD_N\to\R_+$ satisfying the following condition: for all $(w,a,\theta)\in \DD_N$ and all $\ell\in P_N$, the equation $\e(\theta_\ell)=\e^{ua_\ell}$ has no solutions $u\in\C$ with $0<|u|\leq \epsilon(w,a,\theta)$. Then we define formally
\begin{align*}
\LN_{N,\epsilon}(s,w,a,\theta):=\frac{1}{\Gamma(s)(\e(s)-1)}\int_{C(\epsilon(w,a,\theta))}F_N(u,w,a,\theta)u^{s-1}du 
\end{align*}
for each $\epsilon\in\E$ and each $(s,w,a,\theta)\in\C\times\DD_N$. Here we consider 
\begin{align*}
u^{s-1}=\e^{(s-1)(\log|u|+i\Arg(u))},
\end{align*}
where $\Arg(u)=0$ when $u$ varies on the negatively oriented interval $(0,+\infty)$, $0<\Arg(u)< 2\pi$ when $u$ goes counterclockwise around the origin, and $\Arg(u)=2\pi$ when $u$ varies on the positively oriented interval $(0,+\infty)$.
\end{definition}

\begin{theorem}\label{Thm.constr.L}
The function $\LN_N:(\C\smallsetminus P_N)\times \Dit_N\to\C$ given by
\begin{align}\label{L.final}
\LN_N(s,w,a,\theta):=\lim_{\substack{\omega\to0\\ \omega<0}}\LN_{N,\epsilon}\big(s,M_{\e^{i\omega}}(w,a,\theta)\big)
\end{align}
is well-defined and independent of the choice of $\epsilon\in\E$. Furthermore, it satisfies the following properties.
\begin{enumerate}[$(i)$]
\item\label{Thm.constr.L.mero.s} For each fixed $(w,a,\theta)\in \Dit_N$, the map $s\mapsto \LN_N(s,w,a,\theta)$ defines a meromorphic function on $\C$ having at most simple poles at $P_N$.
\item\label{Thm.constr.L.mero.swa} For each fixed $\theta\in\R^N$ and any $\Lambda\in\PP_N$, the map $(s,w,a)\mapsto \LN_N(s,w,a,\theta)$ defines a holomorphic function on $(\C\smallsetminus P_N)\times p\big(T_\Lambda(\mathcal{T}_N^+)\big)$.
\item\label{Thm.constr.L.rest.} For every $(s,w,a,\theta)\in (\C\smallsetminus P_N)\times \Co\times\Co^N\times\R^N$, we have that $\LN_N(s,w,a,\theta)=\zeta_N(s,w,a,\theta)$.
\end{enumerate}
\end{theorem}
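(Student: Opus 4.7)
The plan is to use the infinitesimal clockwise rotation $\omega\to 0^-$ to drag $(w,a)$ into a region where the Hankel integral converges absolutely, justify the limit by a contour-homotopy argument, and then read off properties (i)--(iii). Setting $(w',a',\theta):=M_{\e^{i\omega}}(w,a,\theta)$, I would first verify that the integral defining $\LN_{N,\epsilon}(s,w',a',\theta)$ converges absolutely for $(w,a,\theta)\in\Dit_N$ and small $\omega<0$: the $\E$-condition rules out poles of $F_N(\cdot,w',a',\theta)$ in $\{0<|u|\le\epsilon(w',a',\theta)\}$, so the circular arc of $C(\epsilon)$ contributes a finite piece, while along $[\epsilon,\infty)$, bounding each factor of $F_N$ according to the sign of $\re(a'_\ell)$ gives an integrand whose absolute value decays like $\e^{-u\re(\e^{i\omega}\pi(w,a,\theta))}$, thanks to the identity
\begin{align*}
\re(w')-\!\!\!\sum_{\ell\,:\,\re(a'_\ell)<0}\!\!\!\re(a'_\ell)= \re\bigl(\e^{i\omega}\pi(w,a,\theta)\bigr),
\end{align*}
valid once the rotation $\omega<0$ has moved every $a_\ell\in i\R_+$ off the imaginary axis so that $\{\ell:\re(a'_\ell)<0\}$ coincides with $a^{-1}[-\Co]$. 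Since $\pi(w,a,\theta)\in\Co$ and the rotation $\omega<0$ also lifts any boundary point $\pi\in i\R_+$ into the open right half-plane, the exponent $\re(\e^{i\omega}\pi)$ is strictly positive, hence the integral converges absolutely. Independence of $\epsilon\in\E$ follows from Cauchy's theorem applied in the annulus between two admissible radii, using that the $\E$-condition forbids interior poles and the branch of $u^{s-1}$ with $\Arg(u)\in[0,2\pi]$ is single-valued on the slit plane.

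For the existence of the limit, the change of variables $u=\e^{-i\omega}v$ yields
\begin{align*}
\LN_{N,\epsilon}(s,M_{\e^{i\omega}}(w,a,\theta))=\frac{\e^{-i\omega s}}{\Gamma(s)(\e(s)-1)}\int_{C_\omega(\epsilon')}F_N(v,w,a,\theta)\,v^{s-1}\,dv,
\end{align*}
where $C_\omega(\epsilon')$ denotes the Hankel contour rotated clockwise by $|\omega|$ at some suitable radius $\epsilon'>0$. The integrand on the right does not depend on $\omega$, and the non-origin poles of $v\mapsto F_N(v,w,a,\theta)$ form the discrete set $\{2\pi i(\theta_\ell+k)/a_\ell:a_\ell\neq 0,\,k\in\Z\}$. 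For $|\omega|$ sufficiently small the family $\{C_\omega(\epsilon')\}$ is a homotopy in the complement of these poles, so Cauchy's theorem combined with the uniform decay estimate from the first paragraph (dominated convergence) shows the integral is continuous in $\omega$ as $\omega\to0^-$. Since only the explicit prefactor $\e^{-i\omega s}$ varies with $\omega$, the limit exists and is independent of $\epsilon$.

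Properties (i)--(iii) then follow: (i) the Hankel integral is entire in $s$ and vanishes at integers strictly greater than $N$ by a standard residue computation at $v=0$, while $1/(\Gamma(s)(\e(s)-1))$ is meromorphic with simple poles exactly at $\Z_{\ge 1}$, giving at most simple poles of $\LN_N$ at $P_N$; (ii) on the tube $p\bigl(T_\Lambda(\mathcal{T}_N^+)\bigr)$ the quantity $\re(w-\Tr(a,\Lambda))$ is positive by the definition of $T_\Lambda$, so the Hankel integral converges absolutely with no rotation needed, and uniform convergence on compact subsets yields joint holomorphy in $(s,w,a)$ by Morera's theorem; (iii) if $(w,a,\theta)\in\Co\times\Co^N\times\R^N$ then $a^{-1}[-\Co]=\emptyset$, hence $\pi(w,a,\theta)=w\in\Co$ and $M_{\e^{i\omega}}(w,a,\theta)\in\mathcal{T}_N^+$ for $\omega\in(-\pi/2,0)$, so $\LN_{N,\epsilon}(s,M_{\e^{i\omega}}(w,a,\theta))=\zeta_N(s,M_{\e^{i\omega}}(w,a,\theta))=\e^{-i\omega s}\zeta_N(s,w,a,\theta)$ by the rotation identity defining $\zeta_N^\omega$, and the limit recovers $\zeta_N(s,w,a,\theta)$.

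The main obstacle is the uniformity of the contour-homotopy argument in the boundary cases $\pi(w,a,\theta)\in i\R_+$ or some $a_\ell\in i\R_+$: there the decay rate $\re(\e^{i\omega}\pi)$ degenerates to $0$ as $\omega\to0^-$ and poles of $F_N$ may lie on the limiting positive real axis, so both the dominated convergence bound and the pole-avoidance for $\{C_\omega(\epsilon')\}$ will require careful uniform analysis in a left-neighborhood of $\omega=0$.
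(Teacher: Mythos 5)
Your first paragraph and your treatment of (i)--(iii) are essentially sound and match the paper's supporting machinery (the paper packages the absolute convergence, $\epsilon$-independence and the residue computation at $v=0$ into Proposition~\ref{LN}, and item (ii) into Proposition~\ref{T_NR_N.trans.form.}~\eqref{T_NR_N.trans.form.hol.}). The problem is the second paragraph, which carries the entire weight of the theorem and which you yourself flag as unresolved. The contour-homotopy/dominated-convergence argument works only when the limiting Hankel contour avoids the poles of $F_N(\cdot,w,a,\theta)$ and the decay exponent $\re(\pi(w,a,\theta))$ stays bounded away from $0$; but the points of $\Dit_N$ for which this fails --- some $a_\ell\in i\cdot\R_+$, so that the poles $2\pi i(m+\theta_\ell)/a_\ell$ sit on the positive real axis, or $\pi(w,a,\theta)\in i\cdot\R_+$, so that $|F_N(u,\cdot)|$ has no exponential decay at $\omega=0$ --- are precisely the points gained in passing from $\Di_N$ to $\Dit_N$. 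At every other point the limit in \eqref{L.final} is trivially the value at $\omega=0$ and nothing new is being defined. So the ``main obstacle'' you defer is not a technical refinement: it is the theorem. (This is also why the paper calls $\LN_N$ a principal value; the two one-sided limits $\omega\to0^{\pm}$ genuinely disagree when some $a_\ell$ is purely imaginary, so no $\omega$-symmetric dominated-convergence bound can exist.)

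The paper avoids the contour analysis altogether at this step. Using that $M_{\e^{i\omega}}$ commutes with the sign-flip maps $T_\Lambda$, it first reduces to $a^{-1}[0]=\emptyset$ via Proposition~\ref{LN}~\eqref{LN.proj.}, then chooses $\Lambda\in\PP_N$ with $(T_\Lambda M_{\e^{i\omega}})(w,a,\theta)\in\mathcal{T}_N^+$ for all $\omega<0$ near $0$, so that
\begin{align*}
\LN_{N,\epsilon}\big(s,M_{\e^{i\omega}}(w,a,\theta)\big)=J_{T_\Lambda}(\theta)^{-1}\,\zeta_N\big(s,M_{\e^{i\omega}}T_\Lambda(w,a,\theta)\big),
\end{align*}
and then lets $\omega\to0^-$ using the holomorphy (hence continuity) of the series-based continuation of $\zeta_N$ on $(\C\smallsetminus P_N)\times p(\Ds_N)$: the limit point $T_\Lambda(w,a,\theta)$ lies in some $\mathcal{T}_N^+(\omega)\subseteq\Ds_N$ even when coordinates of $a$ or $\pi(w,a,\theta)$ are purely imaginary. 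This imports the hard boundary behaviour from the known Barnes/Friedman--Ruijsenaars continuation instead of re-deriving it by contour deformation. If you want to keep your direct route, you would have to prove that for $\omega$ in a punctured left-neighbourhood of $0$ the rotated integral is actually \emph{constant} in $\omega$ (no poles are crossed and the connecting circular arcs vanish as $R\to\infty$), which requires the careful large-arc estimate with the concavity argument for $t\mapsto\re\big((\pi M_{\e^{it}})(w,a,\theta)\big)$ that the paper only develops later, in the proof of Proposition~\ref{transf.form.rot.prop.}. As written, your proof establishes the theorem only on the interior points where it is already known.
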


There are two choices in \eqref{L.final}: $\epsilon$ and $\lim_{\substack{\omega\to0^-}}$. As the above theorem claims, the former is irrelevant since it does not affect the outcome. In contrast, the way of how $\omega$ approaches 0 has consequences on the values $\LN_N(s,w,a,\theta)$ when some $a_\ell$ is a nonzero purely imaginary complex number. Therefore, the function $\LN_N$ in \eqref{L.final} is a sort of \textit{principal value for $\LN_N$}. 

Now we state the second theorem of this article. Recall that $G_N:=\mathfrak{T}_N\mathfrak{R}_N\mathfrak{M}_N$,
\begin{align*}
F_N(u,w,a,\theta):=\e^{-uw}\prod_{\ell=1}^N\big(1-\e(\theta_\ell)\e^{-ua_\ell}\big)^{-1},
\end{align*}
and $\D_N\subseteq \Dit_N$ (see \eqref{Dit} and \eqref{D.final}). Let $\arg:\C^*\to(-\pi,\pi]$ be the principal argument function, and let $\mathrm{sgn}:\R\to\{-1,0,1\}$ be the usual sign function. For any nonzero angle $\psi\in[-\pi,\pi]$ and any $(w,a,\theta)\in\DD_N$, we define $\mathcal{P}_\psi(a,\theta)$ as the set of nonzero poles $u=u_0$ of $F_N(u,w,a,\theta)$ satisfying either $\arg(u_0)\in[0,\psi)$ if $\psi>0$, or $\arg(u_0)\in[\psi,0)$ if $\psi<0$. Also, for any $g=T_\Lambda R_\sigma M_\alpha\in G_N$, we define 
\begin{align}\label{J_g}
J_g(s,\theta):=(-1)^{|\Lambda|}\e\big(\Tr\big(\theta,\sigma^{-1}[\Lambda]\big)\big)\alpha^{-s} \qquad\qquad (s\in\C,\,\theta\in\R^N),
\end{align}
where we use the principal branch of the logarithm to define $\alpha^{-s}$.

\begin{theorem}\label{Thm.transf.form.}
The function $\LN_N:(\C\smallsetminus P_N)\times \D_N\to\C$ satisfies the following transformation formula under $G_N$-transformations. Let  $\Lambda\in\PP_N$, $\sigma\in S_N$, and $\alpha\in\C^*$. Set $g:= T_\Lambda R_\sigma M_\alpha$ and $\psi:=\arg(\alpha)$. Then
\begin{align}\label{Transf.form.LN}
\LN_N\big(s,g(w,a,\theta)\big)=J_g(s,\theta)\Big[\LN_N(s,w,a,\theta)+\rho_N^\psi(s,w,a,\theta)/\Gamma(s)\Big]
\end{align} 
for all $(s,w,a,\theta)\in (\C\smallsetminus P_N)\times \D_N$, where $\rho_N^\psi(s,w,a,\theta)$ is the $s$-analytic continuation of the function 
\begin{align}\nonumber
\rho_N^\psi(s,w,a,\theta):=\mathrm{sgn}(-\psi)2\pi i\lim_{R\to\infty}\sum_{\substack{u_0\in\mathcal{P}_\psi(a,\theta)\\ |u_0|<R}}\res_{u=u_0}\big(F_N(u,w,a,\theta)u^{s-1}\big)
\end{align}
holomorphic for $\re(s)<1$. Here we use the principal branch of the logarithm to define both $\alpha^{-s}$ and $u^{s-1}$. 

Furthermore, for every $k\in\Z$, the factor $J_g(k,\theta)$ defines a 1-cocycle for the action of $G_N$ on the multiplicative group of everywhere nonzero functions in $\mathcal{F}(\DD_N,\C)$. As a consequence, for any $g, h\in G_N$ and any $\delta\in\D_N$, we have
\begin{align}\label{Rho.Trans.Form.}
&\rho_N^{\psi_g}\big(k,h(\delta)\big)=\\
\nonumber &J_h(k,\delta)\left[\rho_N^{\psi_{gh}}(k,\delta)-\rho_N^{\psi_{h}}(k,\delta)+\Big(\frac{\psi_g+\psi_h-\psi_{gh}}{2\pi}\Big)\cdot\mathrm{coeff}\big(F_N(u,\delta),u^{-k}\big)\right]
\end{align}
for all $k\in\Z$ (except for possible poles), where $\psi_g$, $\psi_h$, and $\psi_{gh}$ are the angles associated with $g$, $h$, and $gh$ respectively. Here, and from now on, $\mathrm{coeff}(f(z),z^n)$ denotes the coefficient of $z^n$ in the Laurent expansion of $f(z)$ at $z=0$.
\end{theorem}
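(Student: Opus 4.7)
The plan is to establish the transformation formula \eqref{Transf.form.LN} first on each of the three generating families $R_\sigma$, $T_\Lambda$, $M_\alpha$ of $G_N$, then combine via the group law of Proposition~\ref{prop.R_sigma.T_Lambda.M_alpha}; the 1-cocycle identity for $J_g$ and the consequence \eqref{Rho.Trans.Form.} will follow by a short algebraic and geometric argument.

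For $g=R_\sigma$ both the integrand $F_N(u,w,a,\theta)u^{s-1}$ and the contour $C(\epsilon)$ are invariant under the simultaneous permutation of the entries of $a$ and $\theta$, so \eqref{Transf.form.LN} holds trivially with $J_{R_\sigma}\equiv 1$, $\psi=0$, $\rho_N^0\equiv 0$. For $g=T_\Lambda$, a direct computation applying $(1-x^{-1})^{-1}=-x/(1-x)$ to each factor with $\ell\in\Lambda$ and absorbing the extra factors into the shift $w\mapsto w-\Tr(a,\Lambda)$ yields
\[
F_N\bigl(u,T_\Lambda(w,a,\theta)\bigr)=(-1)^{|\Lambda|}\e\bigl(\Tr(\theta,\Lambda)\bigr)F_N(u,w,a,\theta);
\]
since $T_\Lambda$ leaves the nonzero poles of $F_N$ invariant, a common $\epsilon\in\E$ works on both sides, so integration yields \eqref{Transf.form.LN} with $J_{T_\Lambda}$ matching \eqref{J_g} and $\rho_N^0\equiv 0$.

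The crux is $g=M_\alpha$. Starting from the scaling $F_N(u,\alpha w,\alpha a,\theta)=F_N(\alpha u,w,a,\theta)$, I would substitute $v=\alpha u$ in the integral of Definition~\ref{Epsilon}; with the principal branch of $\alpha^{s-1}$ and the argument convention of Definition~\ref{Epsilon} one obtains
\[
\int_{C(\epsilon)}F_N(u,\alpha w,\alpha a,\theta)\,u^{s-1}\,du=\alpha^{-s}\int_{\alpha C(\epsilon)}F_N(v,w,a,\theta)\,v^{s-1}\,dv,
\]
where on $\alpha C(\epsilon)$ one has $\Arg(v)\in[\psi,\psi+2\pi]$. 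I would then deform $\alpha C(\epsilon)$ back to a standard Hankel contour by rotating its outgoing and incoming rays through the angular sector between $\arg=0$ and $\arg=\psi$. The residue theorem applied to bounded truncations produces the extra term $2\pi i\cdot\mathrm{sgn}(-\psi)\sum_{u_0\in\mathcal{P}_\psi(a,\theta)}\res_{u=u_0}\bigl(F_N(u,w,a,\theta)u^{s-1}\bigr)$; the large-arc contributions vanish in the half-plane $\re(s)<1$, where the residue sum converges absolutely because the nonzero poles of $F_N$ lie on a finite union of rays and $|u|^{\re(s)-1}$ decays. This proves \eqref{Transf.form.LN} for $M_\alpha$ on $\re(s)<1$, and $s$-analytic continuation extends $\rho_N^\psi$ and the formula to $\C\smallsetminus P_N$. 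The limit $\omega\to 0^-$ in \eqref{L.final} is precisely what fixes the principal branch of $\alpha^{-s}$ and disambiguates the side on which the deformation is performed when some $a_\ell\in i\R_+$.

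Given \eqref{Transf.form.LN} on each generator, composition via the group law yields the formula for every $g\in G_N$, with $J_g$ as in \eqref{J_g}. The 1-cocycle identity $J_{gh}(k,\delta)=J_g(k,h(\delta))\,J_h(k,\delta)$ at $s=k\in\Z$ is then a direct verification combining $f^{-1}[A\oplus B]=f^{-1}[A]\oplus f^{-1}[B]$, $\Tr(v,A\oplus B)=\Tr(v,A)+\Tr(v,B)-2\Tr(v,A\cap B)$, and the integer-exponent identity $\alpha_1^{-k}\alpha_2^{-k}=(\alpha_1\alpha_2)^{-k}$ (for generic $s$ the last identity fails by the factor $\e\bigl(-s(\psi_g+\psi_h-\psi_{gh})/(2\pi)\bigr)$, precisely the angle defect appearing in \eqref{Rho.Trans.Form.}). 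Relation \eqref{Rho.Trans.Form.} is obtained by applying \eqref{Transf.form.LN} twice to $\LN_N(k,gh(\delta))$—once directly via $gh$, once as the composition $g\circ h$—and equating the two expressions; the cocycle identity cancels the $\LN_N(k,\delta)$ contribution, while the two-step contour deformation (of total angle $\psi_g+\psi_h$) differs from the direct one (of angle $\psi_{gh}\in(-\pi,\pi]$) by an integral multiple $(\psi_g+\psi_h-\psi_{gh})/(2\pi)\in\{-1,0,1\}$ of full revolutions around the origin, each of which contributes $2\pi i$ times the residue at $u=0$ of $F_N(u,\delta)u^{k-1}$, namely $\mathrm{coeff}(F_N(u,\delta),u^{-k})$. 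The chief technical obstacle is the rigorous handling of the contour deformation for $M_\alpha$: keeping the logarithmic branches of $u^{s-1}$, $v^{s-1}$ and $\alpha^{\pm s}$ coherent with Definition~\ref{Epsilon}, and justifying both the vanishing of the large-$|u|$ arcs and the absolute convergence of the residue sum in $\re(s)<1$ despite infinitely many poles of $F_N$ accumulating along rays.
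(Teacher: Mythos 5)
Your overall strategy is the same as the paper's: establish \eqref{Transf.form.LN} on the generators $R_\sigma$, $T_\Lambda$, $M_\alpha$ (the first two by invariance/covariance of $F_N$, the third by the substitution $v=\alpha u$ followed by rotating the contour back and collecting residues), compose via the group law, and obtain \eqref{Rho.Trans.Form.} by applying the formula to $\LN_N(s,gh(\delta))$ in two ways. The gaps are precisely at the points you label ``the chief technical obstacle'' without resolving them. The vanishing of the large circular arcs is not a routine estimate: $|F_N(R\e^{it},w,a,\theta)|$ is governed by $\e^{-R\cdot\re((\pi M_{\e^{it}})(w,a,\theta))}$, which grows exponentially for some $t$ in the sector unless $(w,a,\theta)\in(\pi M_{\e^{it}})^{-1}[\Co]$ for \emph{every} $t$ between $0$ and $\psi$ --- this is exactly why the theorem lives on $\D_N$ from \eqref{D.final} and why the paper introduces $\Di_N(\psi)$ in \eqref{Di_N(psi)}. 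Even under that hypothesis the exponent can vanish at finitely many angles, and the paper needs the piecewise relation $f''_{w,a}=-f_{w,a}$ (concavity of the exponent between consecutive critical angles) to get an $O(R_n^{-1})$ bound along radii $R_n$ avoiding the poles. Your proposal never invokes the defining property of $\D_N$, so the deformation step fails as written. Relatedly, your justification of absolute convergence of the residue sum for $\re(s)<1$ ``because $|u|^{\re(s)-1}$ decays'' is incorrect: along a ray of poles with $|u_0|\sim cm$ the series $\sum_m m^{\re(s)-1}$ diverges for $0\le\re(s)<1$. The paper only asserts convergence of the symmetric partial sums, and deduces it (together with holomorphy in $\re(s)<1$) from the contour identity itself rather than a priori.

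A second gap is the passage from $\Di_N$ to $\D_N$. Your contour manipulation uses the integral representation of $\LN_N$, which is literally available only on $\Di_N$ (no $a_\ell$ purely imaginary, $\pi(w,a,\theta)\in\Co^\circ$); on $\D_N$ the function is defined by the limit \eqref{L.final}, and one must check that $M_{\e^{i\omega}}(w,a,\theta)$ lies in $\Di_N(\psi)$ for all small $\omega<0$ (Lemma \ref{Dit(psi).approx}) and that $\rho_N^\psi(s,M_{\e^{i\omega}}(w,a,\theta))\to\rho_N^\psi(s,w,a,\theta)$; you acknowledge the limit's role for branches but do not carry out this transfer. Finally, for \eqref{Rho.Trans.Form.} your picture of the coefficient term as extra full revolutions picking up the residue at $u=0$ is the right intuition, but it is not what makes the argument rigorous: in the paper the term arises as the limit as $s\to k$ of $\bigl[1-J_g(s,h(\delta))J_h(s,\delta)/J_{gh}(s,\delta)\bigr]\Gamma(s)\LN_N(s,\delta)$, where the simple zero of the bracket (of slope proportional to $\psi_g+\psi_h-\psi_{gh}$) meets the simple pole of $\Gamma(s)\LN_N(s,\delta)$ at $s=k$, whose residue is exactly $\mathrm{coeff}(F_N(u,\delta),u^{-k})$. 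In particular the $\LN_N$ contribution does not simply ``cancel'' at integers; it is the source of the coefficient term, and your version would need this limit computation to close the argument.
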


\begin{corollary}\label{Coro.trans.form.}
Let $\Lambda\in\PP_N$, $\sigma\in S_N$, and $\alpha\in\C^*$. Set $g:=T_\Lambda R_\sigma M_\alpha$ and $\psi:=\arg(\alpha)$. Then
\begin{align*}
\LN_N\big(-k,g(w,a,\theta)\big)=J_g(-k,\theta)\LN_N(-k,w,a,\theta)
\end{align*}
for all $k\in\N$ and all $(w,a,\theta)\in\D_N$. Likewise, for the $s$-derivative $\LN'_N$ we have that
\begin{align*}
&\LN'_N\big(-k,g(w,a,\theta)\big)=\\
&J_g(-k,\theta)\Big[\LN'_N(-k,w,a,\theta)-\log(\alpha)\LN_N(-k,w,a,\theta)+(-1)^kk!\rho_N^\psi(-k,w,a,\theta)\Big]
\end{align*}
for all $k\in\N$ and all $(w,a,\theta)\in\D_N$, where $\log(\alpha):=\log|\alpha|+i\psi$.
\end{corollary}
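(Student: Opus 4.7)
The plan is to read off both identities from the transformation formula \eqref{Transf.form.LN} by evaluating it and its $s$-derivative at $s=-k$. The key analytic inputs are: (a) $\rho_N^\psi(\,\cdot\,,w,a,\theta)$ is holomorphic on $\re(s)<1$ by Theorem~\ref{Thm.transf.form.}, so in particular near $s=-k$; (b) $1/\Gamma(s)$ is entire with simple zeros at the non-positive integers; and (c) $J_g(s,\theta)$ is an entire function of $s$, since by \eqref{J_g} it equals a constant times $\alpha^{-s}$, with the principal branch of the logarithm.

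For the first identity, plug $s=-k$ into \eqref{Transf.form.LN}. Since $1/\Gamma(-k)=0$ and $\rho_N^\psi(-k,w,a,\theta)$ is finite by (a), the ``correction'' term $\rho_N^\psi(s,w,a,\theta)/\Gamma(s)$ vanishes at $s=-k$, leaving $\LN_N\bigl(-k,g(w,a,\theta)\bigr)=J_g(-k,\theta)\LN_N(-k,w,a,\theta)$ as claimed. Here I also use part~\eqref{Thm.constr.L.mero.s} of Theorem~\ref{Thm.constr.L}, which ensures $\LN_N(\,\cdot\,,w,a,\theta)$ and $\LN_N(\,\cdot\,,g(w,a,\theta))$ are holomorphic at $s=-k$ (the poles being confined to $P_N$), so the evaluation is unambiguous.

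For the derivative identity, write $\jmath(s):=J_g(s,\theta)$, $G(s):=\LN_N(s,w,a,\theta)$, $H(s):=\rho_N^\psi(s,w,a,\theta)$, $\phi(s):=1/\Gamma(s)$, and $F(s):=\LN_N\bigl(s,g(w,a,\theta)\bigr)$, so \eqref{Transf.form.LN} reads $F(s)=\jmath(s)G(s)+\jmath(s)\phi(s)H(s)$. From $\jmath(s)=(-1)^{|\Lambda|}\e\bigl(\Tr(\theta,\sigma^{-1}[\Lambda])\bigr)\alpha^{-s}$ we get $\jmath'(s)=-\log(\alpha)\jmath(s)$, and from the Laurent expansion $\Gamma(s)=\frac{(-1)^k/k!}{s+k}+O(1)$ near $s=-k$ we obtain $\phi(-k)=0$ and $\phi'(-k)=(-1)^k k!$. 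Differentiating $F(s)$ and evaluating at $s=-k$ makes every term containing the factor $\phi(-k)$ disappear, yielding
\begin{align*}
F'(-k)=\jmath'(-k)G(-k)+\jmath(-k)G'(-k)+\jmath(-k)\phi'(-k)H(-k).
\end{align*}
Substituting $\jmath'(-k)=-\log(\alpha)\jmath(-k)$ and $\phi'(-k)=(-1)^k k!$ produces exactly the asserted formula for $\LN'_N(-k,g(w,a,\theta))$.

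There is no serious obstacle: the whole argument is calculus once one knows that $\rho_N^\psi$ is analytic across $s=-k$ and that the simple zero of $1/\Gamma$ at $s=-k$ has derivative $(-1)^k k!$. The only point worth checking with care is that all three functions $\LN_N(\,\cdot\,,w,a,\theta)$, $\LN_N(\,\cdot\,,g(w,a,\theta))$, and $H(\,\cdot\,)$ are holomorphic in a common neighbourhood of $s=-k$, which justifies applying $d/ds$ termwise to \eqref{Transf.form.LN} before specializing.
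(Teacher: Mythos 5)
Your proposal is correct and is precisely the ``elementary arguments of complex analysis'' the paper invokes without spelling out: evaluate \eqref{Transf.form.LN} and its $s$-derivative at $s=-k$, using the simple zero of $1/\Gamma$ at $-k$ with derivative $(-1)^k k!$, the holomorphy of $\rho_N^\psi$ on $\re(s)<1$, and $J_g'(s,\theta)=-\log(\alpha)J_g(s,\theta)$. Nothing further is needed.
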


The above formulas unveil a straightforward relation between $\LN_N$ and $\rho_N$ when we restrict the elements $(w,a,\theta)\in\D_N$ to lie in the set $\DD_N^g$ of points fixed by some $g$. We can refer to them as Kronecker limit formulas, in the sense that they express the leading coefficient of $\LN_N$ at certain points in terms of $\rho_N$.

\begin{corollary}\label{Coro.trans.form.fix.points}
Let $\Lambda\in\PP_N$, $\sigma\in S_N$, and $\alpha\in\C^*$. Set $g:= T_\Lambda R_\sigma M_\alpha$ and $\psi:=\arg(\alpha)$. Assume that $(w,a,\theta)\in\D_N\cap\DD_N^g$. Then
\begin{align*}
\rho_N^\psi(s,w,a,\theta)=\Gamma(s)\Big[J_g(s,\theta)^{-1}-1\Big]\LN_N(s,w,a,\theta)
\end{align*}
for all $s\in\C\smallsetminus P_N$. In particular, we have the following formulas for $s=k\in\Z$.
\begin{enumerate}[$(i)$]
\item If $k\leq 0$ and $J_g(k,\theta)=1$, then 
\begin{align*}
\rho_N^\psi(k,w,a,\theta)=\frac{(-1)^k}{(-k)!}\log(\alpha)\LN_N(k,w,a,\theta).
\end{align*}
\item If $k\leq 0$ and $J_g(k,\theta)\not=1$, then $\LN_N(k,w,a,\theta)=0$ and
\begin{align*}
\rho_N^\psi(k,w,a,\theta)=\frac{(-1)^k}{(-k)!}\Big[J_g(k,\theta)^{-1}-1\Big]\LN'_N(k,w,a,\theta)
\end{align*}
\item If $k\in P_N$ and $J_g(k,\theta)=1$, then 
\begin{align*}
\rho_N^\psi(k,w,a,\theta)=(k-1)!\log(\alpha)\res_{s=k}\big(\LN_N(s,w,a,\theta)\big).
\end{align*}
\item If $k\in P_N$ and $J_g(k,\theta)\not=1$, then 
\begin{align*}
\res_{s=k}\big(\rho_N^\psi(s,w,a,\theta)\big)=(k-1)!\Big[J_g(k,\theta)^{-1}-1\Big]\res_{s=k}\big(\LN_N(s,w,a,\theta)\big).
\end{align*}
\item If $k\geq N+1$ and $J_g(k,\theta)=1$, then $\rho_N^\psi(k,w,a,\theta)=0$ and
\begin{align*}
\frac{d}{ds}\rho_N^\psi(s,w,a,\theta)\Big|_{s=k}=(k-1)!\log(\alpha)\LN_N(k,w,a,\theta).
\end{align*}
\item If $k\geq N+1$ and $J_g(k,\theta)\not=1$, then 
\begin{align*}
\rho_N^\psi(k,w,a,\theta)=(k-1)!\Big[J_g(k,\theta)^{-1}-1\Big]\LN_N(k,w,a,\theta).
\end{align*}
\end{enumerate}
\end{corollary}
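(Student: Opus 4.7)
The main identity is an immediate consequence of Theorem~\ref{Thm.transf.form.}. Since $(w,a,\theta)\in\DD_N^g$ we have $g(w,a,\theta)=(w,a,\theta)$, so \eqref{Transf.form.LN} reduces to
\[
\LN_N(s,w,a,\theta)=J_g(s,\theta)\LN_N(s,w,a,\theta)+J_g(s,\theta)\rho_N^\psi(s,w,a,\theta)/\Gamma(s).
\]
Multiplying by $J_g(s,\theta)^{-1}$ and solving for $\rho_N^\psi$, then invoking analytic continuation from the half-plane $\re(s)<1$ where $\rho_N^\psi$ is originally defined, yields
\[
\rho_N^\psi(s,w,a,\theta)=\Gamma(s)\bigl[J_g(s,\theta)^{-1}-1\bigr]\LN_N(s,w,a,\theta) \qquad (s\in\C\smallsetminus P_N).
\]

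All six cases for $s=k\in\Z$ then fall out by Laurent expansion. The key simplification is that the $s$-dependence in \eqref{J_g} enters only through $\alpha^{-s}$, so $J_g(s,\theta)^{-1}=J_g(k,\theta)^{-1}\alpha^{s-k}$; when $J_g(k,\theta)=1$ this reads $\alpha^{s-k}=1+(s-k)\log(\alpha)+O((s-k)^2)$, giving a simple zero at $s=k$ with leading coefficient $\log(\alpha)$, while when $J_g(k,\theta)\neq 1$ it is analytic and nonzero there. Combined with the standard expansion of $\Gamma$ at non-positive integers (simple pole at $s=k\leq 0$ with residue $(-1)^k/(-k)!$) and the fact from Theorem~\ref{Thm.constr.L}(i) that $\LN_N$ has at most simple poles on $P_N$, each claim becomes a matter of matching leading terms on both sides.

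Concretely: for $k\leq 0$ the pole of $\Gamma$ is either cancelled by the simple zero of $J_g(\cdot,\theta)^{-1}-1$ (case (i), producing the $\log(\alpha)$ factor), or, since $\rho_N^\psi$ is originally holomorphic, it must force $\LN_N(k,w,a,\theta)=0$ and yield case (ii) by passing to the next Taylor coefficient $\LN'_N(k,w,a,\theta)$. For $k\in P_N$ we have $\Gamma(k)=(k-1)!$, so multiplying by $s-k$ and taking the limit gives cases (iii) and (iv), the former relying on the cancellation of the pole of $\LN_N$ by the zero of $J_g(\cdot,\theta)^{-1}-1$. For $k\geq N+1$ every factor is holomorphic, making (vi) immediate; for (v) the zero of $J_g(\cdot,\theta)^{-1}-1$ forces $\rho_N^\psi(k,w,a,\theta)=0$, and differentiating the main identity at $s=k$ (only the middle summand in the product rule survives) together with $\frac{d}{ds}J_g(s,\theta)^{-1}\big|_{s=k}=\log(\alpha)$ delivers the derivative formula.

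The only real obstacle is the case bookkeeping: identifying, for each $k$, which of $\Gamma(s)$, $J_g(s,\theta)^{-1}-1$, and $\LN_N(s,w,a,\theta)$ contributes a zero, a pole, or a finite value, and extracting the correct leading coefficient. Beyond Theorem~\ref{Thm.transf.form.} itself, no new analytic ingredient is needed.
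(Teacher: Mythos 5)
Your proposal is correct and follows exactly the route the paper intends: the paper gives no written proof, remarking only that this corollary "follows by elementary arguments of complex analysis" from Theorem~\ref{Thm.transf.form.}, and your argument — substituting the fixed point into \eqref{Transf.form.LN}, solving for $\rho_N^\psi$, and then matching Laurent coefficients of $\Gamma(s)$, $J_g(s,\theta)^{-1}-1=J_g(k,\theta)^{-1}\alpha^{s-k}-1$, and $\LN_N(s,w,a,\theta)$ at each integer $k$ — is precisely that argument. The case bookkeeping (residue $(-1)^k/(-k)!$ of $\Gamma$ at $k\le 0$, the simple zero of $\alpha^{s-k}-1$ with coefficient $\log\alpha$, the at-most-simple poles of $\LN_N$ on $P_N$, and the holomorphy of $\rho_N^\psi$ for $\re(s)<1$ forcing $\LN_N(k,w,a,\theta)=0$ in case (ii)) is all accurate.
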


\noindent \textbf{Example 1.} Let $N\geq3$ be an odd integer, let $\sigma$ be the $N$-cycle $(12\dots N)\in S_N$, let $\eta=\e(1/N)$, and set $g=R_\sigma M_\eta\in G_N$. Then it easily follows that $g$ fixes $\delta=(0,a,\theta)$, where
\begin{align*}
a=(\eta,\eta^{2},\dots,\eta^{N-1},1) \quad\qquad\text{and}\quad\qquad \theta=(c,c,\dots,c) \qquad (c\in\R).
\end{align*}
We assume $c\in[0,1)$ from now on.

Since $N$ is odd, the $u$-poles of $F_N(u,\delta)u^{s-1}$ are simple. Thus 
\begin{align*}
\rho_N^{2\pi/N}(k,\delta)=-(2\pi i)^k\Big[&\eta^{-k\lfloor N/4\rfloor }\sum_{\substack{m\in\Z\\ m+c>0}}\frac{(m+c)^{k-1}}{\prod_{\ell=1}^{N-1}\big(1-\e\big(c-(m+c)\eta^\ell\big)\big)}\\
&+\eta^{-k\lfloor 3N/4\rfloor }\sum_{\substack{m\in\Z\\ m+c<0}}\frac{(m+c)^{k-1}}{\prod_{\ell=1}^{N-1}\big(1-\e\big(c-(m+c)\eta^\ell\big)\big)}\Big]
\end{align*} 
for all $k\in\Z$, where $\lfloor \ \rfloor$ denotes the usual floor function. Note that the right-hand side of this equation converges absolutely for all $k\in\Z$. Also, note that $J_g(k,\theta)=1$ if and only if $N$ divides $k$. 

Remarkably, if $N$ divides $k$, we get
\begin{align*}
\sum_{\substack{m\in\Z\\ m+c\not=0}}&\frac{(m+c)^{k-1}}{\prod_{\ell=1}^{N-1}\big(1-\e\big(c-(m+c)\eta^\ell\big)\big)}
=\\
&\begin{cases}
\frac{(-2\pi i)^{1-k}}{(-k)!N}\cdot\LN_N(k,\delta) & \text{if} \ k\leq0,\\
\frac{-(2\pi i)^{1-N}(N-1)!}{N}\cdot\res_{s=N}\big(\LN_N(s,\delta)\big) & \text{if} \ k=N,\\
0 & \text{if} \ k> N.
\end{cases}
\end{align*}
Hence the residue theorem allows us to succinctly write the right-hand side of the last system of equations as 
\begin{align*}
-\frac{(2\pi i)^{1-k}}{N}\cdot \mathrm{coeff}\big(F_N(u,\delta),u^{-k}\big) \qquad\qquad (k\in\Z).
\end{align*}
Such coefficients can be written as $\Q(\eta)$-linear combinations of products of certain generalized Bernoulli polynomials $B_n(z,\xi)$, which are defined via the generating function
\begin{align}\label{Bernoulli.pol.}
\frac{t\e^{zt}}{\xi\e^{t}-1}=\sum_{n=0}^\infty B_n(z,\xi)\frac{t^n}{n!}.
\end{align}
Indeed, for all $k\in\Z$,
\begin{align*}
\mathrm{coeff}\big(F_N(u,\delta),u^{-k}\big)=(-1)^{N-k}\sum_{\substack{n\in\N^N\\ \Tr(n)=N-k}} \eta^{n_1}\eta^{2n_2}\dots\eta^{Nn_N}\prod_{\ell=1}^N\frac{B_{n_\ell}\big(0,\e(c)\big)}{n_\ell!},
\end{align*}
where $\Tr(n):=\Tr(n,P_N)=n_1+\dots+n_N$, and the empty sum is 0. We know that $B_n(0,\e(c))\in\Q(\e(c))$ for all $n\in\N$ (cf. \cite[p. 42, Eq. (3b)]{Hi} and \cite[Eq. (3.7)]{Ap}). Therefore $\mathrm{coeff}(F_N(u,\delta),u^{-k})$ lies in $\Q(\e(c))$ for all $k\in\Z$, since it remains fixed under the action of the Galois group of $\Q(\e(c),\eta)$ over $\Q(\e(c))$. Now we summarize the above discussion.

\begin{corollary}\label{Cor.Ex.1}
Let $N\geq3$ be an odd integer, let $\eta=\e(1/N)$, and let $c\in[0,1)$. Then
\begin{align*}
(2\pi i)^{k-1}\sum_{\substack{m\in\Z\\ m+c\not=0}}\frac{(m+c)^{k-1}}{\prod_{\ell=1}^{N-1}\big(1-\e\big(c-(m+c)\eta^\ell\big)\big)} \
\in \ \Q\big(\e(c)\big)
\end{align*}
for all $k\in N\Z$. Furthermore, assuming $k\in N\Z$, the above series vanishes either for all $k>N$ if $c=0$, or for all $k>0$ if $c\not=0$.
\end{corollary}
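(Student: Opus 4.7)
The proof will consolidate the preparatory computations carried out immediately before the statement in Example~1. First I would identify the fixed point: by the second assertion of Proposition~\ref{prop.fixed.points}, the triple $\delta:=(0,a,\theta)$ with $a=(\eta,\eta^2,\dots,\eta^{N-1},1)$ and $\theta=(c,\dots,c)$ is fixed by $g:=R_\sigma M_\eta$ for $\sigma=(12\cdots N)$, because $a$ is an $\eta^{-1}$-eigenvector and $\theta$ is a $1$-eigenvector of the associated permutation matrix $r(\sigma)$. A direct computation from \eqref{J_g} gives $J_g(k,\theta)=\eta^{-k}$, so the hypothesis $J_g(k,\theta)=1$ of the Kronecker formulas in Corollary~\ref{Coro.trans.form.fix.points} is precisely the divisibility $N\mid k$ appearing in the corollary.

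Next I would evaluate $\rho_N^{2\pi/N}(k,\delta)$ directly from its defining residue sum. Since $N$ is odd, the nonzero $u$-poles of $F_N(u,\delta)$, located at $u=2\pi i(m+c)/\eta^\ell$, are simple and pairwise distinct: any collision would force $\eta^{\ell-\ell'}\in\R$, which for odd $N$ forces $\ell\equiv\ell'\pmod{N}$. Summing the simple residues at those poles whose argument lies in the sector $[0,2\pi/N)$ yields the double sum already written down in Example~1; after restricting to $k\in N\Z$, both $\eta^{-k\lfloor N/4\rfloor}$ and $\eta^{-k\lfloor 3N/4\rfloor}$ trivialize and the two pieces merge into the single sum $\sum_{m+c\neq 0}$ appearing in the corollary.

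The third step is to apply Corollary~\ref{Coro.trans.form.fix.points} at $s=k\in N\Z$, invoking its case~(i) for $k\leq 0$, case~(iii) for $k=N$, and case~(v) for $k>N$. Each case rewrites $\rho_N^{2\pi/N}(k,\delta)$ as $\log(\eta)=2\pi i/N$ times an explicit factorial multiple of either $\LN_N(k,\delta)$, its residue at $s=N$, or zero. Evaluating the Hankel integral in Definition~\ref{Epsilon} at these integer points makes the outer rays cancel and leaves only the small-loop contribution, equal to $2\pi i\cdot\mathrm{coeff}(F_N(u,\delta),u^{-k})$ up to standard factorial factors coming from $\Gamma(s)(\e(s)-1)$ at integer $s$. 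Reconciling these factorials across the three regimes unifies them into the single identity
\begin{equation*}
\sum_{m+c\neq 0}\frac{(m+c)^{k-1}}{\prod_{\ell=1}^{N-1}\big(1-\e(c-(m+c)\eta^\ell)\big)}=-\frac{(2\pi i)^{1-k}}{N}\,\mathrm{coeff}\big(F_N(u,\delta),u^{-k}\big),
\end{equation*}
valid for all $k\in N\Z$.

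Finally, expanding each factor of $F_N(u,\delta)=\prod_\ell(1-\e(c)\e^{-u\eta^\ell})^{-1}$ via the generating function~\eqref{Bernoulli.pol.} writes $\mathrm{coeff}(F_N(u,\delta),u^{-k})$ as a $\Z$-linear combination of products $\prod_\ell B_{n_\ell}(0,\e(c))/n_\ell!$ weighted by monomials in $\eta$. Each $B_n(0,\e(c))\in\Q(\e(c))$ is classical, and the set $\{\eta,\eta^2,\dots,\eta^N\}$ of all $N$-th roots of unity is stable under the action of $\mathrm{Gal}(\Q(\eta,\e(c))/\Q(\e(c)))$, so the full coefficient is Galois-invariant and hence lies in $\Q(\e(c))$; multiplying the displayed identity by $(2\pi i)^{k-1}$ yields the algebraicity claim. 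The vanishing assertions drop out at once: for $k>N$ the coefficient $\mathrm{coeff}(F_N(u,\delta),u^{-k})$ vanishes since $F_N(u,\delta)$ has a pole at $u=0$ of order at most $N$; and if $c\neq 0$ then the factors $1-\e(c)$ are nonzero at $u=0$, so $F_N(u,\delta)$ is holomorphic there and every negative-index Laurent coefficient vanishes. The main technical obstacle will be the careful bookkeeping of signs, factorials, and residue positions needed to verify that the three regimes of Corollary~\ref{Coro.trans.form.fix.points} genuinely collapse into the single closed form above.
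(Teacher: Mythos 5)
Your proposal is correct and follows essentially the same route as the paper's own proof, which is carried out in Example~1 preceding the statement: identify the fixed point $\delta$ of $g=R_\sigma M_\eta$, compute $\rho_N^{2\pi/N}(k,\delta)$ as a residue sum over simple poles, apply Corollary~\ref{Coro.trans.form.fix.points} in the three regimes $k\le 0$, $k=N$, $k>N$, convert to $\mathrm{coeff}\big(F_N(u,\delta),u^{-k}\big)$ via the residue theorem, and conclude by the Bernoulli-polynomial expansion together with Galois invariance over $\Q(\e(c))$. The only additions are welcome elaborations (the distinctness of the poles for odd $N$ and the explicit holomorphy argument at $u=0$ when $c\neq0$), which are consistent with the paper.
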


\noindent \textbf{Example 2.} Let $N\geq2$, let $\sigma$ be the $N$-cycle $(12\dots N)\in S_N$, let $\eta=\e(1/2N)$, and set $g=T_{\{1\}}R_\sigma M_\eta\in G_N$. Then it follows that $g$ fixes $\delta=(w,a,0)$, where
\begin{align*}
w=(1-\eta)^{-1} \quad\qquad\text{and}\quad\qquad a=(1,\eta,\dots,\eta^{N-1}).
\end{align*}

Again, the $u$-poles of $F_N(u,\delta)u^{s-1}$ are simple, and we get an absolutely convergent series
\begin{align*}
\rho_N^{\pi/N}(k,\delta)=-(2\pi i)^{k}\eta^{(N-k)\lfloor N/2\rfloor}\sum_{m=1}^\infty \frac{\e(-mw)}{\prod_{\ell=1}^{N-1}\big(1-\e(-m\eta^\ell)\big)}m^{k-1} \qquad(k\in\Z).
\end{align*}
In this case, $J_g(k,0)=1$ if and only if $k\equiv N\mod 2N$.

Suppose $k\equiv N\mod 2N$. Using the same reasoning as in the previous example, we obtain
\begin{align*}
\sum_{m=1}^\infty \frac{\e(-mw)}{\prod_{\ell=1}^{N-1}\big(1-\e(-m\eta^\ell)\big)}m^{k-1}=-\frac{(2\pi i)^{1-k}}{2N}\cdot\mathrm{coeff}\big(F_N(u,\delta),u^{-k}\big).
\end{align*}
Then, using the generalized Bernoulli polynomials \eqref{Bernoulli.pol.}, we arrive at the following expression for the above coefficient:
\begin{align*}
(-1)^{N-k}\eta^{(1-N)N/2}\sum_{\substack{n\in\N^N\\ \Tr(n)=N-k}} \eta^{n_2}\eta^{2n_3}\dots\eta^{(N-1)n_N}\prod_{\ell=1}^N\frac{B_{n_\ell}\big(\eta^{1-\ell}w/N,1\big)}{n_\ell!} \quad (k\in\Z).
\end{align*}
In this case, a direct Galois-theoretic approach to study $\mathrm{coeff}(F_N(u,\delta),u^{-k})$ seems awkward, but at least we know that $\mathrm{coeff}(F_N(u,\delta),u^{-k})$ lies in $\Q(\eta)$ for all $k\in\Z$. Now we show that, if $N$ is odd, then $\mathrm{coeff}(F_N(u,\delta),u^{-k})$ is actually a rational number.

Suppose $N$ is odd, and let $\Lambda\in\PP_N$ be the set of odd numbers in $P_N$. Then, by virtue of \eqref{Rho.Trans.Form.}, we have
\begin{align*}
\rho_N^{\pi/N}(k,\delta)&=\eta^{-k}\Big[\rho_N^{2\pi/N}(k,M_{\eta^{-1}}\delta)-\rho_N^{\pi/N}(k,M_{\eta^{-1}}\delta)\Big]\\
&=\eta^{-k}\Big[(-1)^{\frac{N-1}{2}}\rho_N^{2\pi/N}(k,T_\Lambda M_{\eta^{-1}}\delta)+\rho_N^{\pi/N}(k,T_{\{1\}}M_{\eta^{-1}}\delta)\Big].
\end{align*} 
Using basic relations among powers of $\eta$ and suitably permuting coordinates in the rightmost expression, we get
\begin{align*}
\rho_N^{\pi/N}(k,\delta)=\eta^{-k}\Big[(-1)^{\frac{N-1}{2}}\rho_N^{2\pi/N}\big(k,0,(1,\eta^2,\eta^4,\dots,\eta^{2(N-1)}),0\big)+\rho_N^{\pi/N}(k,\delta)\Big],
\end{align*}
from which we easily obtain
\begin{align*}
\rho_N^{2\pi/N}\big(k,0,(1,\eta^2,\eta^4,\dots,\eta^{2(N-1)}),0\big)=(-1)^{\frac{N-1}{2}}(\eta^k-1)\rho_N^{\pi/N}(k,\delta)\qquad(k\in\Z).
\end{align*}
Note that we studied the left-hand side of this equation in the previous example. Hence, using Corollary~\ref{Cor.Ex.1}, it follows that $\rho_N^{\pi/N}(k,\delta)$ lies in $2\pi i\Q$ for all $k\equiv N\mod 2N$. Therefore $\mathrm{coeff}(F_N(u,\delta),u^{-k})$ is rational for all $k\equiv N\mod 2N$.

Now we show another natural application of our results. Since $J_g(0,0)=-1$, Corollary~\ref{Coro.trans.form.fix.points} says that 
$\rho_N^{\pi/N}(0,\delta)=-2\LN_N'(0,\delta)$. In view of Theorem~\ref{Thm.constr.L}~\eqref{Thm.constr.L.rest.}, we want to use the transformations \eqref{T_Lambda} to write $\LN_N'(0,\delta)$ in terms of the Barnes' multiple zeta function. Let $\gamma=(v,b,0)$, where
\begin{align*}
v=\frac{1}{2}\Tr(b,P_N) \qquad\text{and}\qquad b=\big(\eta^{\lfloor N/2\rfloor+1-N},\eta^{\lfloor N/2\rfloor+2-N},\dots,\eta^{\lfloor N/2\rfloor}\big).
\end{align*}
Then, since all the coordinates of $b$ lie in the convex cone $\Co$, we have
\begin{align*}
\rho_N^{\pi/N}(0,\delta)=-2\LN_N'(0,\delta)=2(-1)^{N-\lfloor N/2\rfloor}\LN_N'(0,\gamma)=2(-1)^{N-\lfloor N/2\rfloor}\zeta_N'(0,\gamma).
\end{align*}
On the other hand, using the geometric series and the Taylor expansion of $-\log(1-z)$ at $z=0$, we get the convergent series
\begin{align*}
(-1)^{N-\lfloor N/2\rfloor}&\rho_N^{\pi/N}(0,\delta)=\\
&\sum_{n\in\N^{N-1}}\log\Big(1-\e(-w)\e\big((1+n_1)\eta+\dots+(1+n_{N-1})\eta^{N-1}\big)\Big).
\end{align*}
Taking exponentials and using the notation in \eqref{Mult.Gamma}, we therefore obtain 
\begin{align*}
\Gamma_N(v,b)=\prod_{n\in\N^{N-1}}\Big(1-\e(-w)\e\big((1+n_1)\eta+\dots+(1+n_{N-1})\eta^{N-1}\big)\Big)^{1/2}.
\end{align*}
We now summarize our discussion.

\begin{corollary}\label{Cor.Ex.2}
Let $N\geq2$, let $\eta=\e(1/2N)$, and let $w=(1-\eta)^{-1}$. Then
\begin{align*}
(2\pi i)^{k-1}\sum_{m=1}^\infty \frac{\e(-mw)}{\prod_{\ell=1}^{N-1}\big(1-\e(-m\eta^\ell)\big)}m^{k-1} \ \in \ \begin{cases}\Q(\eta)&  \text{if} \ \text{$N$ is even},\\ \Q& \text{if} \ \text{$N$ is odd},\end{cases}
\end{align*}
for all $k\equiv N\mod 2N$. Furthermore, assuming $k\equiv N\mod 2N$, the above series vanishes for all $k>N$. On the other hand, if $N$ is odd, then the series
\begin{align*}
\sum_{\substack{m\in\Z\\ m\not=0}} \frac{m^{k-1}}{\prod_{\ell=1}^{N-1}\big(1-\e(-m\eta^{2\ell})\big)} 
\end{align*}
vanishes for all $k\in2N\Z$. Also, we have the following evaluation of the Barnes' multiple gamma function in \eqref{Mult.Gamma}:
\begin{align*}
\Gamma_N(v,b)=\prod_{n\in\N^{N-1}}\Big(1-\e(-w)\e\big((1+n_1)\eta+\dots+(1+n_{N-1})\eta^{N-1}\big)\Big)^{1/2},
\end{align*}
where $v=\Tr(b,P_N)/2$ and $b=(\eta^{\lfloor N/2\rfloor+1-N},\eta^{\lfloor N/2\rfloor+2-N},\dots,\eta^{\lfloor N/2\rfloor})$.
\end{corollary}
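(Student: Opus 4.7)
The plan is to follow the arc sketched in the discussion immediately preceding the statement, organizing each step into a clean sequence of verifications. First I would verify that $g := T_{\{1\}}R_\sigma M_\eta$ fixes $\delta = (w,a,0)$: tracing $M_\eta$, $R_\sigma$, and $T_{\{1\}}$ in turn, the cyclic rotation combined with $\eta^N=-1$ and $T_{\{1\}}$'s sign flip recovers $a$, while the $w$-coordinate evolves as $w\mapsto\eta w\mapsto\eta w+1=(1-\eta)^{-1}=w$. Direct computation from \eqref{J_g} gives $J_g(k,0)=-\eta^{-k}$, so $J_g(k,0)=1$ precisely when $k\equiv N\pmod{2N}$, which is exactly the hypothesis in the first two claims.

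For the first two assertions I would compute $\rho_N^{\pi/N}(k,\delta)$ explicitly as a residue series: enumerating the nonzero poles of $F_N(u,\delta)u^{s-1}$ with $\arg(u)\in[0,\pi/N)$, which because $\theta=0$ are all simple and come only from the factor $\ell=1$ (namely $u=2\pi im$, $m\geq 1$), and tracking the principal argument, yields
\begin{align*}
\rho_N^{\pi/N}(k,\delta)=-(2\pi i)^k\eta^{(N-k)\lfloor N/2\rfloor}\sum_{m=1}^\infty\frac{\e(-mw)\,m^{k-1}}{\prod_{\ell=1}^{N-1}\bigl(1-\e(-m\eta^\ell)\bigr)}.
\end{align*}
When $k\equiv N\pmod{2N}$, applying Corollary~\ref{Coro.trans.form.fix.points} case by case ($k\leq 0$, $k=N$, $k>N$) and then unifying via the residue theorem at $u=0$, exactly as in Example~1, identifies the series with $-(2\pi i)^{1-k}/(2N)$ times $\mathrm{coeff}(F_N(u,\delta),u^{-k})$. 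Substituting the generating function \eqref{Bernoulli.pol.} expresses this coefficient as the claimed finite sum of products of $B_{n_\ell}(\eta^{1-\ell}w/N,1)\in\Q(\eta)$, placing the series in $\Q(\eta)$; and since $F_N(u,\delta)$ has a pole of order exactly $N$ at $u=0$, the coefficient vanishes for $k>N$.

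To upgrade to $\Q$ when $N$ is odd, and to obtain the vanishing of the symmetric series for $k\in 2N\Z$, I would apply the cocycle identity \eqref{Rho.Trans.Form.}: expressing $\rho_N^{\pi/N}(k,\delta)$ via $M_{\eta^{-1}}\delta$, decomposing this via $T_{\{1\}}$, $T_\Lambda$ with $\Lambda$ the set of odd indices in $P_N$, and an appropriate cyclic permutation, and simplifying using $\eta^{2N}=1$, one obtains
\begin{align*}
\rho_N^{2\pi/N}\bigl(k,0,(1,\eta^2,\ldots,\eta^{2(N-1)}),0\bigr)=(-1)^{(N-1)/2}(\eta^k-1)\,\rho_N^{\pi/N}(k,\delta).
\end{align*}
Up to a coordinate permutation, which leaves $\rho$ invariant, the left-hand side is the $c=0$ case of Example~1 with that example's $\eta$ specialized to our $\eta^2=\e(1/N)$, so Corollary~\ref{Cor.Ex.1} places it in $2\pi i\cdot\Q$. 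For $k\equiv N\pmod{2N}$ we have $\eta^k-1=-2\in\Q^*$, which forces $\rho_N^{\pi/N}(k,\delta)\in 2\pi i\cdot\Q$ and the first series into $\Q$; for $k\in 2N\Z$ we have $\eta^k-1=0$, so the left-hand side vanishes, and since it equals the symmetric series up to the nonvanishing factor $-(2\pi i)^k$, that series vanishes too.

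Finally, for the gamma product I would specialize to $k=0$: since $J_g(0,0)=-1$, Corollary~\ref{Coro.trans.form.fix.points}(ii) gives $\rho_N^{\pi/N}(0,\delta)=-2\LN'_N(0,\delta)$. Choosing a product $T_\Lambda R_\sigma$ that transports $\gamma:=(v,b,0)$ to $\delta$, with $b=(\eta^{\lfloor N/2\rfloor+1-N},\ldots,\eta^{\lfloor N/2\rfloor})$ lying in $\Co^N$ and $v=\Tr(b,P_N)/2$, Corollary~\ref{Coro.trans.form.} (applied with $\alpha=1$, so no $\rho$-correction) gives $\LN'_N(0,\delta)=(-1)^{N-\lfloor N/2\rfloor}\LN'_N(0,\gamma)$, and Theorem~\ref{Thm.constr.L}~\eqref{Thm.constr.L.rest.} identifies $\LN'_N(0,\gamma)=\zeta'_N(0,\gamma)$. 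The infinite product then follows by expanding each $(1-\e(-m\eta^\ell))^{-1}$ as a geometric series in $n_\ell\geq 0$, collapsing the $m$-sum via $-\log(1-z)=\sum_{m\geq 1}z^m/m$, and exponentiating per \eqref{Mult.Gamma}. The main technical hurdle I expect is the sign- and angle-bookkeeping, in particular the prefactor $\eta^{(N-k)\lfloor N/2\rfloor}$ arising from the principal argument in the residue computation and the $N$-odd reduction identity above, which entangles several group transformations with the cocycle formula; isolating each of these as a standalone computation should keep the argument transparent.
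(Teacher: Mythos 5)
Your outline reproduces the paper's own argument (the discussion labelled ``Example 2'' preceding the statement): verify that $g=T_{\{1\}}R_\sigma M_\eta$ fixes $\delta$, compute $J_g(k,0)=-\eta^{-k}$, express $\rho_N^{\pi/N}(k,\delta)$ as a residue series, identify it with $-(2\pi i)^{1-k}(2N)^{-1}\mathrm{coeff}(F_N(u,\delta),u^{-k})$ via Corollary~\ref{Coro.trans.form.fix.points}, use the cocycle identity \eqref{Rho.Trans.Form.} together with Corollary~\ref{Cor.Ex.1} for the rationality and vanishing claims when $N$ is odd, and specialize to $k=0$ for the gamma product. So the strategy is the same, and it works.

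There is, however, one concrete misstep in the residue computation. You assert that the nonzero poles of $F_N(u,\delta)u^{s-1}$ with $\arg(u)\in[0,\pi/N)$ ``come only from the factor $\ell=1$ (namely $u=2\pi i m$, $m\geq1$)''. Those poles have argument exactly $\pi/2$, which lies outside $[0,\pi/N)$ for every $N\geq2$. The poles in the sector actually come from the factor $\ell=\lfloor N/2\rfloor+1$ (\ie $a_\ell=\eta^{\lfloor N/2\rfloor}$) and sit at $u_0=2\pi i m\,\eta^{-\lfloor N/2\rfloor}$, $m\geq1$; it is precisely this rotation, together with the residue $\eta^{-\lfloor N/2\rfloor}$ of that factor, that produces the prefactor $\eta^{(N-k)\lfloor N/2\rfloor}$ in the displayed formula for $\rho_N^{\pi/N}(k,\delta)$. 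Carrying out the computation with the poles you name would yield the series \emph{without} that prefactor (and with a different sign), so the step as described is inconsistent with the formula you then use. The prefactor is harmless for $k\equiv N\pmod{2N}$ (where it equals $1$), but it enters the $N$-odd reduction and the $k=0$ evaluation, so it must be gotten right. Relatedly, in the gamma part the correct sign is $\LN_N'(0,\delta)=(-1)^{N-1-\lfloor N/2\rfloor}\LN_N'(0,\gamma)$ (the transporting $T_\Lambda$ has $|\Lambda|=N-1-\lfloor N/2\rfloor$), not $(-1)^{N-\lfloor N/2\rfloor}$ as you write; with your sign the exponent in the infinite product would come out as $-1/2$. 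These are exactly the bookkeeping hazards you flag at the end, but as written the two steps would fail.
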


\section{The group $G$}

Here our main objective is to prove Propositions \ref{prop.R_sigma.T_Lambda} and \ref{prop.R_sigma.T_Lambda.M_alpha}, detailing the many features of the $\mathfrak{T}_N\mathfrak{R}_N$-action on $\Dit_N$ as well as the $G_N$-action on $\D_N$ used in the following. Unless otherwise stated, we employ the notation recorded in the preceding section.

We start by displaying some basic relations satisfied by the trace function $\Tr$ in \eqref{trace} that will prove useful subsequently. We omit the proofs as they come rather easily from its definition.

\begin{lemma}\label{Tr.prop.} Let $N$ be a positive integer.
\begin{enumerate}[$(i)$]
\item \label{Tr.prop.linear} For any $\Lambda\in\PP_N$, the map $v\mapsto\Tr(v,\Lambda)$ defines a linear form $\Tr(\cdot,\Lambda):\C^N\to\C$.
\item \label{Tr.prop.sets} Let $\Lambda_1, \Lambda_2\in\PP_N$. For any $v\in\C^N$, we have 
\begin{align}
&\label{Tr.prop.sets.minus}\Tr(v,\Lambda_1\smallsetminus\Lambda_2)=\Tr(v,\Lambda_1)-\Tr(v,\Lambda_1\cap\Lambda_2),\\
&\label{Tr.prop.sets.union}\Tr(v,\Lambda_1\cup\Lambda_2)=\Tr(v,\Lambda_1)+\Tr(v,\Lambda_2)-\Tr(v,\Lambda_1\cap\Lambda_2).
\end{align}
\item \label{Tr.prop.desc.} Let $\Lambda\in\PP_N$, set $N(\Lambda):=N-|\Lambda|$, and let $\varphi$ be the unique strictly increasing bijection between $P_{N(\Lambda)}$ and $P_N\smallsetminus\Lambda$. For any $\lambda\in\PP_{N(\Lambda)}$ and any $v\in\C^N$, we have $\Tr(v\varphi,\lambda)=\Tr\big(v,\varphi(\lambda)\big)$.
\end{enumerate}
\end{lemma}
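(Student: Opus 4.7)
The statement is a collection of bookkeeping identities for the finite sum $\Tr(v,\Lambda)=\sum_{\ell\in\Lambda}v_\ell$, so the plan is to unpack the definition in each case. First I would dispose of (i) by observing that each coordinate projection $v\mapsto v_\ell$ is a linear form on $\C^N$ and a finite sum of linear forms is again linear; hence $\Tr(\cdot,\Lambda)$ is linear for every $\Lambda\in\PP_N$, and there is nothing more to check.

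For (ii), the plan is to reduce both identities to disjoint-union decompositions of the indexing set. The key observation is that $\Lambda_1$ partitions as the disjoint union
\begin{align*}
\Lambda_1=(\Lambda_1\smallsetminus\Lambda_2)\sqcup(\Lambda_1\cap\Lambda_2),
\end{align*}
so splitting $\sum_{\ell\in\Lambda_1}v_\ell$ accordingly and solving for the $\Lambda_1\smallsetminus\Lambda_2$ piece immediately yields \eqref{Tr.prop.sets.minus}. For \eqref{Tr.prop.sets.union}, I would use the disjoint decomposition $\Lambda_1\cup\Lambda_2=\Lambda_1\sqcup(\Lambda_2\smallsetminus\Lambda_1)$ to get $\Tr(v,\Lambda_1\cup\Lambda_2)=\Tr(v,\Lambda_1)+\Tr(v,\Lambda_2\smallsetminus\Lambda_1)$, and then substitute the expression for $\Tr(v,\Lambda_2\smallsetminus\Lambda_1)$ furnished by \eqref{Tr.prop.sets.minus} (with the roles of $\Lambda_1$ and $\Lambda_2$ swapped). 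Both identities therefore reduce to partitioning the index set and invoking finite additivity.

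For (iii), the plan is pure reindexing. Since $\varphi$ is a bijection $P_{N(\Lambda)}\to P_N\smallsetminus\Lambda$, the composite $v\varphi:P_{N(\Lambda)}\to\C$ satisfies $(v\varphi)(k)=v_{\varphi(k)}$, and $\varphi$ restricts to a bijection $\lambda\to\varphi(\lambda)$ for any $\lambda\in\PP_{N(\Lambda)}$. Consequently
\begin{align*}
\Tr(v\varphi,\lambda)=\sum_{k\in\lambda}(v\varphi)(k)=\sum_{k\in\lambda}v_{\varphi(k)}=\sum_{j\in\varphi(\lambda)}v_j=\Tr\bigl(v,\varphi(\lambda)\bigr),
\end{align*}
which is the claimed identity. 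The monotonicity of $\varphi$ plays no role in the equality itself; it is only used elsewhere in the paper to fix a canonical choice of $\varphi$ for each $\Lambda$. There is no real obstacle anywhere: the entire lemma is a direct consequence of the defining formula, which is exactly why the author announces that the proofs are omitted.
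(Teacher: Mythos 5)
Your proof is correct and is exactly the routine verification the paper has in mind: the author explicitly omits the proof of this lemma as following easily from the definition, and your disjoint-union decompositions for (ii) and the reindexing via the bijection $\varphi$ for (iii) are the standard way to carry it out. Nothing is missing.
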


\subsection{The group $\mathfrak{M}_N$} For each $\alpha\in\C^*$, recall the function
\begin{align*}
M_\alpha=M_{\alpha,N}:\DD_N\to\DD_N, \qquad\qquad M_\alpha(w,a,\theta):=(\alpha w,\alpha a, \theta),
\end{align*}
defined in \eqref{M_alpha}. We denote $\mathfrak{M}_N$ the group generated by the $M_\alpha$ ($\alpha\in\C^*$), which is naturally isomorphic to $\C^*$. Let $\mathrm{C}(\mathfrak{M}_N)$ and $\mathrm{N}(\mathfrak{M}_N)$ be the centralizer and normalizer of $\mathfrak{M}_N$ in $\Aut(\DD_N)$ respectively, so $\mathrm{C}(\mathfrak{M}_N)\subseteq \mathrm{N}(\mathfrak{M}_N)$.

\begin{lemma}\label{lemma.M_alpha}
The group $\mathfrak{M}_N$ acts on the set $\E$ described in Definition~\ref{Epsilon} by 
\begin{align*}
\epsilon\cdot M_\alpha=\epsilon_\alpha:\DD_N\to\R_+, \quad \epsilon_\alpha(w,a,\theta):=|\alpha|\cdot\epsilon(\alpha w,\alpha a,\theta) \quad\quad (\alpha\in\C^*,\, \epsilon\in\E).
\end{align*}
Furthermore,
\begin{align*}
F_N\big(u,M_\alpha(w,a,\theta)\big)=F_N(\alpha u, w,a,\theta) \qquad\qquad (\alpha\in\C^*)
\end{align*}
for all $(w,a,\theta)\in\DD_N$ and all $u\in\C$ such that both $u$ and $\alpha u$ are away from singularities.
\end{lemma}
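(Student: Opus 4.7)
The plan is to handle the two assertions of Lemma~\ref{lemma.M_alpha} separately, the identity for $F_N$ being a one-line computation and the $\E$-action requiring a little more care.

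For the identity involving $F_N$, I would simply unfold the definition \eqref{F_N}. Since $M_\alpha(w,a,\theta)=(\alpha w,\alpha a,\theta)$, one has
\begin{align*}
F_N\bigl(u,M_\alpha(w,a,\theta)\bigr)=\e^{-u(\alpha w)}\prod_{\ell=1}^N\bigl(1-\e(\theta_\ell)\e^{-u(\alpha a_\ell)}\bigr)^{-1}=F_N(\alpha u,w,a,\theta),
\end{align*}
which is exactly the claim, valid whenever both sides make sense (i.e.\ neither $u$ nor $\alpha u$ is a zero of the relevant $1-\e(\theta_\ell)\e^{-\cdot a_\ell}$ factor).

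For the action on $\E$, I would first check that $\DD_N$ is stable under $M_\alpha$: the defining condition $a^{-1}[0]\subseteq\theta^{-1}[\R\smallsetminus\Z]$ only depends on the vanishing locus of $a$, and $(\alpha a)^{-1}[0]=a^{-1}[0]$ since $\alpha\ne0$, while $\theta$ is untouched. So $\epsilon_\alpha$ is a well-defined positive function on $\DD_N$. Next I would verify $\epsilon_\alpha\in\E$ by a change of variables: given $(w,a,\theta)\in\DD_N$ and $\ell\in P_N$, a solution $u$ of $\e(\theta_\ell)=\e^{ua_\ell}$ corresponds bijectively, via $v:=u/\alpha$, to a solution of $\e(\theta_\ell)=\e^{v(\alpha a_\ell)}$. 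Since $\epsilon\in\E$ applied to the point $M_\alpha(w,a,\theta)\in\DD_N$ tells us that no such $v$ exists with $0<|v|\le\epsilon(\alpha w,\alpha a,\theta)$, the corresponding $u$ must satisfy $|u|=|\alpha|\,|v|>|\alpha|\,\epsilon(\alpha w,\alpha a,\theta)=\epsilon_\alpha(w,a,\theta)$, as required.

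It then remains to verify the group-action axioms. The identity $M_1$ sends $\epsilon$ to $1\cdot\epsilon(w,a,\theta)=\epsilon$, and for $\alpha,\beta\in\C^*$ one has
\begin{align*}
\bigl((\epsilon\cdot M_\alpha)\cdot M_\beta\bigr)(w,a,\theta)
=|\beta|(\epsilon\cdot M_\alpha)(\beta w,\beta a,\theta)
=|\alpha\beta|\,\epsilon(\alpha\beta w,\alpha\beta a,\theta)
=(\epsilon\cdot M_{\alpha\beta})(w,a,\theta),
\end{align*}
matching the composition $M_\alpha\circ M_\beta=M_{\alpha\beta}$ in $\mathfrak{M}_N$. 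No step here looks truly delicate; the one spot demanding the slightest care is getting the factor $|\alpha|$ in $\epsilon_\alpha$ on the correct side when translating between the solution sets of the two exponential equations, which is forced on us by the scaling $u=\alpha v$ in the first part of the lemma.
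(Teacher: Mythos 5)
Your proposal is correct and follows essentially the same route as the paper: the key step in both is the change of variables $u\mapsto u/\alpha$ (equivalently $u'=\alpha u$) relating solutions of $\e(\theta_\ell)=\e^{ua_\ell}$ to those of $\e(\theta_\ell)=\e^{v(\alpha a_\ell)}$, which is exactly how the paper verifies $\epsilon_\alpha\in\E$. You merely write out the group-action axioms and the $F_N$ computation that the paper dismisses as routine.
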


\begin{proof}
We first claim that $\epsilon_\alpha\in\E$ for all $\epsilon\in\E$ and all $\alpha\in\C^*$. Indeed, let $\epsilon\in\E$, $\alpha\in\C^*$, $(w,a,\theta)\in\DD_N$, and $\ell\in P_N$. Since $\epsilon\in\E$ and $M_\alpha(w,a,\theta)\in\DD_N$, we know by definition that the equation $\e(\theta_\ell)=\e^{u(\alpha a_\ell)}$ has no solutions $u\in\C$ with $0<|u|\leq \epsilon(\alpha w,\alpha a,\theta)$. But $0<|u|\leq\epsilon(\alpha w,\alpha a,\theta)$ if and only if $0<|\alpha u|\leq\epsilon_\alpha(w,a,\theta)$, so the claim follows after applying the change of variable $\alpha u=u'$. Then it is easy to prove that this defines a right action of $\mathfrak{M}_N$ on $\E$. The last assertion follows from the definition \eqref{F_N} of $F_N$ by a direct computation.
\end{proof}

\subsection{The group $\mathfrak{R}_N$} For each $\sigma\in S_N$, recall the function 
\begin{align*}
R_\sigma=R_{\sigma,N}:\DD_N\to\DD_N, \qquad\qquad R_\sigma(w,a,\theta):=\big(w \, , \, a r(\sigma) \, , \, \theta r(\sigma)\big),
\end{align*}
defined in \eqref{R_sigma}. Here $r(\sigma)$ denotes the $N\times N$ square matrix whose $\ell$-th column equals the $\sigma^{-1}(\ell)$-th column of the identity matrix of size $N$. It is well-known that $r(\sigma)r(\tau)=r(\tau\sigma)$ for all $\sigma, \tau\in S_N$. Thus we have that $R_\tau R_\sigma=R_{\tau\sigma}$ for all $\sigma, \tau\in S_N$, and that the group $\mathfrak{R}_N$ generated by the $R_\sigma$ is isomorphic to $S_N$. 

\begin{lemma}\label{lemma.R_sigma}
The group $\mathfrak{R}_N$ is contained in $\Aut(\DD_N)_\pi\cap \mathrm{C}(\mathfrak{M}_N)$, and it acts on the set $\E$ described in Definition~\ref{Epsilon} by composition on the right. Furthermore,
\begin{align*}
F_N\big(u,R_\sigma(w,a,\theta)\big)=F_N(u,w,a,\theta) \qquad\qquad (\sigma\in S_N)
\end{align*}
for all $(w,a,\theta)\in\DD_N$ and all $u\in\C$ away from singularities.
\end{lemma}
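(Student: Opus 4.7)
The plan is to establish the four assertions ($R_\sigma$ preserves $\DD_N$; $R_\sigma \in \Aut(\DD_N)_\pi$; $R_\sigma \in \mathrm{C}(\mathfrak{M}_N)$; $\mathfrak{R}_N$ acts on $\E$; invariance of $F_N$) in turn, all of which reduce to bookkeeping with the identities $(ar(\sigma))_\ell = a_{\sigma^{-1}(\ell)}$, $(\theta r(\sigma))_\ell = \theta_{\sigma^{-1}(\ell)}$, and $r(\sigma)r(\tau) = r(\tau\sigma)$.

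First I would verify that $R_\sigma$ is a homeomorphism of $\DD_N$ onto itself. Continuity of $R_\sigma$ and of $R_{\sigma^{-1}} = R_\sigma^{-1}$ is immediate from the linearity of coordinate permutation, so the content is that the defining condition $a^{-1}[0] \subseteq \theta^{-1}[\R\smallsetminus\Z]$ of $\DD_N$ is preserved. By the identities above, the preimages of $0$ and of $\R\smallsetminus\Z$ under the image components are simply $\sigma(a^{-1}[0])$ and $\sigma(\theta^{-1}[\R\smallsetminus\Z])$, and set inclusion is preserved by applying the bijection $\sigma$.

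Next, to obtain $R_\sigma \in \Aut(\DD_N)_\pi$, I would use the elementary identity $\Tr(ar(\sigma), \sigma(\Lambda)) = \sum_{\ell \in \sigma(\Lambda)} a_{\sigma^{-1}(\ell)} = \Tr(a,\Lambda)$ valid for every $\Lambda \in \PP_N$, applied to $\Lambda = a^{-1}[-\Co]$; combined with $(ar(\sigma))^{-1}[-\Co] = \sigma(a^{-1}[-\Co])$, this gives $\pi R_\sigma(w,a,\theta) = w - \Tr(a, a^{-1}[-\Co]) = \pi(w,a,\theta)$. The centralizer inclusion $R_\sigma \in \mathrm{C}(\mathfrak{M}_N)$ is a one-line check: both $R_\sigma M_\alpha$ and $M_\alpha R_\sigma$ send $(w,a,\theta)$ to $(\alpha w, \alpha a r(\sigma), \theta r(\sigma))$, since scalar multiplication commutes with right multiplication by $r(\sigma)$.

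For the action on $\E$ I would set $\epsilon \cdot R_\sigma := \epsilon \circ R_\sigma$ and check that $\epsilon \circ R_\sigma \in \E$. Given $(w,a,\theta) \in \DD_N$ and $\ell \in P_N$, set $k = \sigma^{-1}(\ell)$; then the equation $\e(\theta_\ell) = \e^{ua_\ell}$ with $0 < |u| \leq \epsilon(R_\sigma(w,a,\theta))$ is precisely the equation $\e((\theta r(\sigma))_k) = \e^{u(ar(\sigma))_k}$ with $0 < |u| \leq \epsilon(R_\sigma(w,a,\theta))$, which has no solution because $\epsilon \in \E$ is applied to the point $R_\sigma(w,a,\theta) \in \DD_N$. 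The right-action axiom $(\epsilon \cdot R_\sigma)\cdot R_\tau = \epsilon \cdot(R_\sigma R_\tau)$ then follows from associativity of composition together with $R_\sigma \circ R_\tau = R_{\sigma\tau}$, which I would note explicitly by applying both sides to $(w,a,\theta)$ and invoking $r(\tau)r(\sigma) = r(\sigma\tau)$.

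Finally, substituting $R_\sigma(w,a,\theta) = (w, ar(\sigma), \theta r(\sigma))$ into the definition \eqref{F_N} of $F_N$ and reindexing the product $\prod_{\ell=1}^N$ through $k = \sigma^{-1}(\ell)$ yields $F_N(u,w,a,\theta)$ term by term, giving the invariance claim for all $u$ avoiding the (common) pole set of both sides. No step presents a genuine obstacle; the entire lemma expresses the fact that coordinate permutation commutes with every structure introduced so far.
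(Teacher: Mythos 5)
Your proposal is correct and follows essentially the same route as the paper's proof: verify $\pi R_\sigma=\pi$ via invariance of the trace under simultaneous permutation, note $R_\sigma M_\alpha=M_\alpha R_\sigma$, check $\epsilon\circ R_\sigma\in\E$ by reindexing, and reindex the product defining $F_N$. The paper states these steps tersely ("obviously", "readily follows"); you merely supply the bookkeeping it leaves implicit.
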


\begin{proof}
Let $(w,a,\theta)\in\DD_N$. We obviously have 
\begin{align*}
\Tr(a,a^{-1}[-\Co])=\Tr\big(ar(\sigma)\,,\,(ar(\sigma))^{-1}[-\Co]\big) \qquad\qquad (\sigma\in S_N),
\end{align*}
which implies $\pi R_\sigma=\pi$ for all $\sigma\in S_N$. On the other hand, it readily follows that $R_\sigma M_\alpha=M_\alpha R_\sigma$ for all $\sigma\in S_N$ and all $\alpha\in\C^*$. Therefore $\mathfrak{R}_N$ is contained in $\Aut(\DD_N)_\pi\cap \mathrm{C}(\mathfrak{M}_N)$.

Since every $R_\sigma$ acts on an element $(w,a,\theta)\in\DD_N$ by permuting simultaneously the entries of both $a$ and $\theta$ according to $\sigma$, we can see in Definition~\ref{Epsilon} that the composition $\epsilon R_\sigma\in\E$ for all $\sigma\in S_N$. Then it easily follows that this defines an action of $\mathfrak{R}_N$ on $\E$.

Finally, we can see directly from \eqref{F_N} that the test function $F_N$ is invariant under the action (by homeomorphisms) of $\mathfrak{R}_N$ on $\DD_N$.
\end{proof}

\subsection{The group $\mathfrak{T}_N$}

For each $\Lambda\in\PP_N$, recall the function 
\begin{align*}
T_\Lambda=T_{\Lambda,N}:\DD_N\to \DD_N, \qquad
  T_\Lambda(w,a,\theta):=\big(w-\Tr(a,\Lambda) \, , \,  ad(\Lambda) \, , \, \theta d(\Lambda)\big),
\end{align*}
defined in \eqref{T_Lambda}, where $d(\Lambda)$ denotes the $N\times N$ diagonal matrix whose $(\ell,\ell)$-entry equals either $-1$ if $\ell\in\Lambda$, or $1$ otherwise. We define $\mathfrak{T}_N:=\{T_\Lambda\,|\,\Lambda\in\PP_N\}$.

\begin{lemma}\label{lemma.T_Lambda}
The set $\mathfrak{T}_N$ is a subgroup of $\Aut(\DD_N)_\pi\cap \mathrm{C}(\mathfrak{M}_N)$ isomorphic to $\{\pm1\}^N$, and its group operation satisfies $T_{\Lambda_1} T_{\Lambda_2}=T_{\Lambda_1\oplus\Lambda_2}$ $(\Lambda_1,\,\Lambda_2\in\PP_N)$. Furthermore, $\mathfrak{T}_N$ acts on the set $\E$ described in Definition~\ref{Epsilon} by composition on the right, and we have
\begin{align*}
F_N\big(u,T_\Lambda(w,a,\theta)\big)=(-1)^{|\Lambda|}\e\big(\Tr(\theta,\Lambda)\big)F_N(u,w,a,\theta)
\end{align*}
for all $(w,a,\theta)\in\DD_N$ and all $u\in\C$ away from singularities.
\end{lemma}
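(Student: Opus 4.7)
The plan is to verify the several claims one at a time. First I would check that $T_\Lambda$ maps $\DD_N$ into itself and is a homeomorphism: continuity is transparent, the defining condition $a^{-1}[0]\subseteq\theta^{-1}[\R\smallsetminus\Z]$ is unchanged by simultaneously negating any pair $(a_\ell,\theta_\ell)$, and involutivity (proven in the next step) provides the continuous inverse.

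Next, I compute $T_{\Lambda_1}T_{\Lambda_2}$ directly. The $a$- and $\theta$-coordinates collapse via $d(\Lambda_2)d(\Lambda_1)=d(\Lambda_1\oplus\Lambda_2)$, a pointwise sign-multiplication check. For the $w$-coordinate, Lemma~\ref{Tr.prop.}\eqref{Tr.prop.sets.minus} gives
\[\Tr(ad(\Lambda_2),\Lambda_1)=-\Tr(a,\Lambda_1\cap\Lambda_2)+\Tr(a,\Lambda_1\smallsetminus\Lambda_2)=\Tr(a,\Lambda_1)-2\Tr(a,\Lambda_1\cap\Lambda_2),\]
and \eqref{Tr.prop.sets.union} yields $\Tr(a,\Lambda_1\oplus\Lambda_2)=\Tr(a,\Lambda_1)+\Tr(a,\Lambda_2)-2\Tr(a,\Lambda_1\cap\Lambda_2)$. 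Thus $T_{\Lambda_1}T_{\Lambda_2}=T_{\Lambda_1\oplus\Lambda_2}$, so $\mathfrak{T}_N$ is a group under composition, involutive (since $T_\Lambda^2=T_\emptyset=\mathrm{id}$), and isomorphic to $(\PP_N,\oplus)\cong\{\pm1\}^N$.

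The delicate assertion is $\pi T_\Lambda=\pi$. Using the disjoint decomposition $\C=\Co\sqcup\{0\}\sqcup-\Co$, I split $\Tr(a,\Lambda)=\Tr(a,\Lambda\cap a^{-1}[\Co])+\Tr(a,\Lambda\cap a^{-1}[-\Co])$, and note that a sign flip on a coordinate in $\Lambda$ swaps $\Co$ and $-\Co$, so
\[(ad(\Lambda))^{-1}[-\Co]=\bigl(\Lambda\cap a^{-1}[\Co]\bigr)\cup\bigl((P_N\smallsetminus\Lambda)\cap a^{-1}[-\Co]\bigr).\]
A short cancellation then yields $\Tr(a,\Lambda)+\Tr(ad(\Lambda),(ad(\Lambda))^{-1}[-\Co])=\Tr(a,a^{-1}[-\Co])$, which is precisely $\pi T_\Lambda=\pi$. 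The centralizer relation $T_\Lambda M_\alpha=M_\alpha T_\Lambda$ is immediate from linearity of the trace.

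Finally, for the $\E$-action: simultaneously negating $(a_\ell,\theta_\ell)$ leaves the equation $\e(\theta_\ell)=\e^{ua_\ell}$ unchanged (take reciprocals), so membership in $\E$ passes through $T_\Lambda$, and the right-action axiom follows from associativity of composition together with the group law just established. For the $F_N$ identity, I apply the elementary identity $(1-x)^{-1}=-x^{-1}(1-x^{-1})^{-1}$ to each factor indexed by $\ell\in\Lambda$ (with $x=\e(-\theta_\ell)\e^{ua_\ell}$); the collected prefactor is $(-1)^{|\Lambda|}\e(\Tr(\theta,\Lambda))\e^{-u\Tr(a,\Lambda)}$, which combines with the exponential shift $\e^{-u(w-\Tr(a,\Lambda))}$ and the untouched factors $\ell\notin\Lambda$ to produce the claimed right-hand side. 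The main obstacle is the bookkeeping in $\pi T_\Lambda=\pi$; once the cone decomposition is set up cleanly, the remaining verifications are routine.
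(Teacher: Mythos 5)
Your proposal is correct and follows essentially the same route as the paper: the same trace identities for the composition law, the same disjoint decomposition of $\bigl(ad(\Lambda)\bigr)^{-1}[-\Co]$ for $\pi T_\Lambda=\pi$, and the same reciprocal trick for both the $\E$-action and the $F_N$ identity (which the paper leaves as a "direct computation"). No gaps.
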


\begin{proof}
We first prove the composition law. Let $a\in\C^N$ and let $\Lambda_1, \Lambda_2\in\PP_N$. Using \eqref{Tr.prop.sets.minus}, Lemma~\ref{Tr.prop.}~\eqref{Tr.prop.linear}, and \eqref{Tr.prop.sets.union}, it verifies that
\begin{align*}
&\Tr(a,\Lambda_2)+\Tr\big(ad(\Lambda_2)\,,\,\Lambda_1\big)\\
&=\Tr(a,\Lambda_2\smallsetminus\Lambda_1)+\Tr(a,\Lambda_1\cap\Lambda_2)+\Tr\big(ad(\Lambda_2)\,,\,\Lambda_1\smallsetminus\Lambda_2\big)+\Tr\big(ad(\Lambda_2)\,,\,\Lambda_1\cap\Lambda_2\big)\\
&=\Tr(a,\Lambda_2\smallsetminus\Lambda_1)+\Tr(a,\Lambda_1\cap\Lambda_2)+\Tr(a,\Lambda_1\smallsetminus\Lambda_2)+\Tr(-a,\Lambda_1\cap\Lambda_2)\\
&=\Tr(a,\Lambda_2\smallsetminus\Lambda_1)+\Tr\big(a\,,\,\Lambda_1\smallsetminus\Lambda_2\big)\\
&=\Tr(a,\Lambda_1\oplus\Lambda_2).
\end{align*}
On the other hand, it can be easily proved that $d(\Lambda_2)d(\Lambda_1)=d(\Lambda_1\oplus\Lambda_2)$. Hence the identity $T_{\Lambda_1}T_{\Lambda_2}=T_{\Lambda_1\oplus\Lambda_2}$ follows immediately.

Now we prove that $\mathfrak{T}_N$ is a subgroup of $\Aut(\DD_N)_\pi\cap\mathrm{C}(\mathfrak{M}_N)$. First note that, for every $\Lambda\in\PP_N$, $T_\Lambda$ is a continuous function whose square equals the identity map on $\DD_N$. Then all the $T_\Lambda$ are homeomorphisms. Next observe that, for every $a\in\C^{N}$ and every $\Lambda\in\PP_N$,
\begin{align*}
\big(ad(\Lambda)\big)^{-1}[-\Co]\,=\,\big(a^{-1}[-\Co]\smallsetminus\Lambda\big)\,\cup\, \big(a^{-1}[\Co]\cap\Lambda\big) \qquad\qquad (\text{disjoint union}).
\end{align*}
Then, using \eqref{Tr.prop.sets.minus}, Lemma~\ref{Tr.prop.}~\eqref{Tr.prop.linear}, and \eqref{Tr.prop.sets.union}, we get 
\begin{align*}
&\Tr(a,\Lambda)+\Tr\big(ad(\Lambda)\,,\,(ad(\Lambda))^{-1}[-\Co]\big)\\
&=\Tr(a,\Lambda\smallsetminus a^{-1}[\Co])+\Tr(a,a^{-1}[\Co]\cap\Lambda)+\Tr(a,a^{-1}[-\Co]\smallsetminus\Lambda)+\Tr\big(-a,a^{-1}[\Co]\cap\Lambda\big)\\
&=\Tr(a,\Lambda\smallsetminus a^{-1}[\Co])+\Tr(a,a^{-1}[-\Co]\smallsetminus\Lambda)\\
&=\Tr(a,\Lambda\cap a^{-1}[-\Co])+\Tr(a,a^{-1}[-\Co]\smallsetminus\Lambda)\\
&=\Tr(a,a^{-1}[-\Co]).
\end{align*} 
Hence we easily obtain $\pi T_\Lambda=\pi$ for all $\Lambda\in\PP_N$. On the other hand, using the linearity of $\Tr$ in the first variable, it is straightforward to verify that $M_\alpha T_\Lambda=T_\Lambda M_\alpha$ for all $\alpha\in\C^*$ and all $\Lambda\in\PP_N$. Therefore the above shows that $\mathfrak{T}_N$ is a subgroup of $\Aut(\DD_N)_\pi\cap\mathrm{C}(\mathfrak{M}_N)$. In order to prove that $\mathfrak{T}_N$ is isomorphic to $\{\pm1\}^N$, it suffices to consider the mapping $T_\Lambda\mapsto (1,1,\dots,1)d(\Lambda)$.

Now we prove that $\mathfrak{T}_N$ acts on $\E$ by composition on the right. We first show that $\epsilon T_\Lambda\in\E$ for all $\Lambda\in\PP_N$ and all $\epsilon\in\E$. Let $\epsilon\in\E$, $\Lambda\in\PP_N$, $(w,a,\theta)\in\DD_N$, and $\ell\in P_N$. Since $\epsilon\in\E$, the equation $\e\big((\theta d(\Lambda))_\ell\big)=\e^{u(ad(\Lambda))_\ell}$ has no solutions $u\in\C$ with $0<|u|\leq \epsilon T_\Lambda(w,a,\theta)$. But $\e\big((\theta d(\Lambda))_\ell\big)=\e^{u(ad(\Lambda))_\ell}$ if and only if $\e(\theta_\ell)=\e^{ua_\ell}$, so $\epsilon T_\Lambda\in\E$. Then it readily verifies that the above defines an action of $\mathfrak{T}_N$ on $\E$. Finally, the last assertion follows from the definition \eqref{F_N} of $F_N$ by a direct computation.
\end{proof}

\begin{proof}[Proof of Propositions 1 and 2]
By virtue of Lemmas \ref{lemma.R_sigma} and \ref{lemma.T_Lambda}, it only remains to show that $R_\sigma T_\Lambda R_\sigma^{-1}=T_{\sigma(\Lambda)}$ for all $\Lambda\in\PP_N$ and all $\sigma\in S_N$. 

For any set $X$ and any subset $S\subseteq X$, let 
\begin{align}\label{indicator.funct.}
\ind_S:X\to\{0,1\}, \qquad\qquad \ind_S(x):=\begin{cases}1 \quad \text{if} \ x\in S,\\ 0 \quad \text{if} \ x\notin S,\end{cases}
\end{align}
be the usual indicator function. Let $\Lambda\in\PP_N$ and $\sigma\in S_N$. Since
\begin{align*}
ar(\sigma^{-1})=(a_{\sigma(1)},\dots,a_{\sigma(N)}) \qquad\text{and}\qquad ad(\Lambda)=\big((-1)^{\ind_\Lambda(1)}a_1,\dots,(-1)^{\ind_\Lambda(N)}a_N\big)
\end{align*}
for any $a\in\C^N$, we have 
\begin{align}\label{Lambda&Sigma.relations}
\Tr(ar(\sigma^{-1}),\Lambda)=\Tr(a,\sigma(\Lambda)) \qquad\text{and}\qquad r(\sigma^{-1})d(\Lambda)r(\sigma)=d(\sigma(\Lambda)).
\end{align}
Then the desired identity follows easily by using \eqref{T_Lambda} and \eqref{R_sigma}.
\end{proof}

\section{The integral representation of $\LN$}

In this section our main objective is twofold: to prove Proposition~\ref{prop.fixed.points}, and to establish the integral representation of $\LN$. In order to address the latter part, we start by defining a set of parameters to give a meaning to the formal definition of $\LN$ (see Definition~\ref{Epsilon}).

For any positive integer $N$, let
\begin{align}\label{Di_N}
\Di_N:=\big\{(w,a,\theta)\in\DD_N \, \big| \,  \pi(w,a,\theta)\in\Co^\circ \quad \text{and} \quad a\in[\C\smallsetminus i\cdot\R^*]^N\big\},
\end{align}
where $\R^*:=\R\smallsetminus\{0\}$, $\pi$ as in \eqref{pi}, and $\DD_N$ as in \eqref{good.domain}. To consider also the case $N=0$, let
\begin{align}
& P_0:=\emptyset, \quad \PP_0:=\{\emptyset\}, \quad \DD_0:=\C,  \quad \text{and} \quad \Di_0=\mathcal{T}_0^+:=\{w\in\C \ | \ \re(w)>0\},
   \label{D_0}
\end{align}
where $\emptyset$ denotes the empty set.

\subsection{A family of projections}  

In order to begin with the study of $\Di_N$, we introduce a family of projections where the distinguished element $\pi$ fits in naturally.

\begin{definition}\label{projectiondefinition}
Let $N\in\N$ and $\Lambda\in \PP_N$. Set $N(\Lambda):=N-|\Lambda|$.
\begin{enumerate}[$(i)$]
\item If $\Lambda=\emptyset$, we define $\pi_\Lambda=\pi_{\Lambda,N}:\DD_N\to\DD_{N(\Lambda)}$ to be the identity map.
\item If $\Lambda=P_N\not=\emptyset$, we define $\pi_\Lambda=\pi_{\Lambda,N}:\DD_N\to\DD_{N(\Lambda)}$ by 
\begin{align*}
\pi_\Lambda(w,a,\theta):=w-\Tr(a,a^{-1}[-\Co]), \qquad\qquad (w,a,\theta)\in \DD_N.
\end{align*}
\item If $\Lambda$ is a non-empty proper subset of $P_N$, we define $\pi_\Lambda=\pi_{\Lambda,N}:\DD_N\to\DD_{N(\Lambda)}$ by 
\begin{align*}
\pi_\Lambda(w,a,\theta):=\big(w-\Tr(a,\Lambda\cap a^{-1}[-\Co]) \, ,\, \widehat{a}_\Lambda \, , \, \widehat{\theta}_\Lambda\big), \qquad\qquad (w,a,\theta)\in \DD_N.
\end{align*}
Here, for every $v\in\C^{N}$, we write $\widehat{v}_\Lambda\in \C^{N(\Lambda)}$ for the composition $v\varphi:P_{N(\Lambda)}\to\C$, where $\varphi$ is the unique strictly increasing bijection between $P_{N(\Lambda)}$ and $P_N\smallsetminus\Lambda$.
\end{enumerate}
\end{definition}

We remark that $\pi$ corresponds to $\pi_{P_N}$ in this setting. However, we will usually keep on writing $\pi$ in place of $\pi_{P_N}$ for the sake of simplicity.

\begin{lemma}\label{bef.projection.lemma}
Let $N\in\N$ and $\Lambda\in\PP_N$. 
\begin{enumerate}[$(i)$]
\item \label{bef.projection.lemma.limit} If $\{(W_n,A_n,\Theta_n)\}_{n\in\N}$ is a sequence in $\DD_N$ converging to $(w,a,\theta)\in\Di_N$, then
\begin{align*}
\lim_{n\to\infty}\pi_\Lambda(W_n,A_n,\Theta_n)=\pi_\Lambda(w,a,\theta).
\end{align*}
\item \label{bef.projection.lemma.Di} The set $\Di_N$ is path-connected and contains $\mathcal{T}_N^+$. Furthermore, if $N\geq1$, its interior
\begin{align}
\label{interiorDi} \Dio_N=\big\{(w,a,\theta)\in\Di_N \, \big| \,  a^{-1}[0]=\emptyset\big\}
\end{align}
can be decomposed into $2^N$ simply-connected components $\Dio_{N,\Lambda}$ $(\Lambda\in\PP_N)$, where
\begin{align*}
\Dio_{N,\Lambda}:=\big\{(w,a,\theta)\in\Dio_N\,\big|\,a^{-1}[-\Co]=\Lambda\big\}.
\end{align*}
\end{enumerate}
\end{lemma}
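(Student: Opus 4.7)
For part (i), I would unpack the definition of $\pi_\Lambda$: its components $\widehat{A_n}_\Lambda$ and $\widehat{\Theta_n}_\Lambda$ are mere coordinate selections and therefore continuous, so the content of the assertion is the convergence $W_n - \Tr(A_n, \Lambda \cap A_n^{-1}[-\Co]) \to w - \Tr(a, \Lambda \cap a^{-1}[-\Co])$. The key structural input is the constraint $a \in [\C \setminus i\R^*]^N$ built into the definition of $\Di_N$, which forces every coordinate $a_\ell$ either to vanish or to have $\re(a_\ell) \neq 0$. For indices $\ell$ with $\re(a_\ell) \neq 0$, the membership $\ell \in A_n^{-1}[-\Co]$ eventually agrees with $\ell \in a^{-1}[-\Co]$ by continuity of $\re$, since $-\Co$ is locally characterised by $\re(z) < 0$ once one is away from $i\R$. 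For indices $\ell$ with $a_\ell = 0$, the summand $A_{n,\ell}$ itself goes to $0$, so it contributes negligibly regardless of whether $\ell$ is swept into $A_n^{-1}[-\Co]$ or not. Splitting the trace along this dichotomy and passing to the limit termwise gives the claim.

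For part (ii), the inclusion $\mathcal{T}_N^+ \subseteq \Di_N$ is immediate since $\re(a_\ell) > 0$ forces $a^{-1}[-\Co] = \emptyset$ and $\pi = w \in \Co^\circ$. To identify the interior, if $a^{-1}[0] = \emptyset$ and $a \in [\C \setminus i\R^*]^N$, then every $\re(a_\ell) \neq 0$, the set $a^{-1}[-\Co] = \{\ell : \re(a_\ell) < 0\}$ is locally constant, and the defining conditions of $\Di_N$ reduce to finitely many open strict inequalities; conversely, if some $a_\ell = 0$, arbitrarily small purely imaginary perturbations of $a_\ell$ land in $i\R^*$ and leave $\Di_N$, so the point is on the boundary. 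Partitioning $\Dio_N$ by the value of $a^{-1}[-\Co]$ produces the $2^N$ pieces $\Dio_{N,\Lambda}$. Each is cut out from $\C \times \C^N \times \R^N$ by the open real-linear conditions $\pm\re(a_\ell) > 0$ (with sign dictated by $\ell \in \Lambda$ or not) and $\re(w - \Tr(a,\Lambda)) > 0$, hence is convex and in particular simply connected. Equivalently, the transformation $T_\Lambda$ from Lemma~\ref{lemma.T_Lambda} restricts to a homeomorphism of $\mathcal{T}_N^+$ onto $\Dio_{N,\Lambda}$.

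The main obstacle is the path-connectedness of $\Di_N$ itself, because the $2^N$ chambers of $\Dio_N$ are genuinely disjoint, so any path between two different chambers must traverse the boundary locus $\{a_\ell = 0\}$ while keeping $a \in [\C \setminus i\R^*]^N$ and $\pi \in \Co^\circ$. I would connect an arbitrary $(w, a, \theta) \in \Di_N$ to a point of $\mathcal{T}_N^+$ by an explicit piecewise path. Set $\pi_0 := \pi(w,a,\theta) \in \Co^\circ$, and partition the indices as $\Lambda_+, \Lambda_0, \Lambda_-$ according to whether $\re(a_\ell)$ is positive, zero (equivalently $a_\ell = 0$) or negative. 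First, for each $\ell \in \Lambda_-$ with $\theta_\ell \in \Z$, linearly deform $\theta_\ell$ to a non-integer value; this is legal since $a_\ell \neq 0$ on this segment, and $\pi$ is untouched. Second, for each $\ell \in \Lambda_0$, move $a_\ell$ from $0$ along the positive real axis to some $c_\ell > 0$; these indices never belong to $a(t)^{-1}[-\Co]$, so $\pi$ remains $\pi_0$. Third, for each $\ell \in \Lambda_-$ in turn, first translate $a_\ell$ within the left half plane to $-1$ and then push it along the real axis through $0$ to $+1$; the crossing $a_\ell = 0$ is legal thanks to step one, and throughout this phase I define $w(t) := \pi_0 + \Tr(a(t), a(t)^{-1}[-\Co])$ so that $\pi \equiv \pi_0 \in \Co^\circ$ at every instant, the continuity of $w(t)$ across each crossing being exactly what part (i) furnishes. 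The terminal point has every $\re(a_\ell) > 0$ and $w = \pi_0 \in \Co^\circ$, hence lies in the convex set $\mathcal{T}_N^+$, completing the argument.
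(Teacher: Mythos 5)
Your proof is correct, and for part (i), the identification of the interior, and the convexity of the chambers $\Dio_{N,\Lambda}$ it coincides with the paper's argument (same splitting of the trace over $a^{-1}[0]$ versus the indices with $\re(a_\ell)\neq0$, same locally-constant-$a^{-1}[-\Co]$ observation, same reduction to open affine inequalities). The one place where you genuinely diverge is the path-connectedness of $\Di_N$. The paper exploits the fact that the single point $\delta_0=(1,0,(1/2,\dots,1/2))$, with \emph{all} coordinates of $a$ equal to zero, lies in $\Di_N$, and joins an arbitrary $\delta$ to $\delta_0$ by just two line segments: first scale $a$ down radially (sending the already-zero coordinates up to $1$ and $\theta$ to $1/2$ along the way, which is legal because $a(t)^{-1}[0]=\emptyset$ for $0<t<1$), then collapse the remaining coordinates to $0$; along the first segment $\pi(\delta(t))=(1-t)\pi(\delta)+t$ stays in $\Co^\circ$ by convexity, with no need to compensate $w$. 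Your route instead keeps the coordinates of $a$ "alive", walks each $\ell\in\Lambda_-$ across the wall $\re(a_\ell)=0$ one at a time through the origin, and enforces $\pi\equiv\pi_0$ by redefining $w(t)=\pi_0+\Tr\big(a(t),a(t)^{-1}[-\Co]\big)$, whose continuity across each crossing is exactly the content of part (i). Both arguments are sound; the paper's is shorter because collapsing $a$ to $0$ lets all chambers meet at one point, whereas yours has the merit of landing inside $\mathcal{T}_N^+$ itself and of making explicit why the discontinuity of the indicator of $a^{-1}[-\Co]$ is harmless at a crossing. (Minor bookkeeping in your step one: you only need to perturb $\theta_\ell$ for those $\ell\in\Lambda_-$ whose coordinate will actually cross the origin, and since $a_\ell\neq0$ there the condition $a^{-1}[0]\subseteq\theta^{-1}[\R\smallsetminus\Z]$ is indeed never violated during that perturbation, as you note.)
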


\begin{proof}
We assume that $\Lambda$ is non-empty (so $N\geq1$) since otherwise both (i) and (ii) follow trivially. We first prove (i). Since $\{A_n\}_{n\in\N}$ converges to $a$ and $\re(a_\ell)=0$ only if $a_\ell=0$ ($\ell\in P_N$), we have 
\begin{align*}
a^{-1}[-\Co]=a^{-1}[-\Co^\circ]=A_n^{-1}[-\Co]\smallsetminus a^{-1}[0] \qquad\qquad \text{(for all big enough $n$).}
\end{align*}
Then, using \eqref{Tr.prop.sets.minus}, it follows that
\begin{align*}
\Tr\big(A_n\,,\,\Lambda\cap A_n^{-1}[-\Co]\big)=\Tr\big(A_n\,,\,\Lambda\cap a^{-1}[-\Co]\big)+\Tr\big(A_n\,,\,\Lambda\cap A_n^{-1}[-\Co]\cap a^{-1}[0]\big)
\end{align*}
for all big enough $n$. Since $\Tr$ is continuous in the first variable (see Lemma~\ref{Tr.prop.}~\eqref{Tr.prop.linear}), the first term in the right-hand side of the last equation converges to $\Tr\big(a,\Lambda\cap a^{-1}[-\Co]\big)$ as $n$ approaches infinity. On the other hand, we have
\begin{align*}
\lim_{n\to\infty}\big|\Tr\big(A_n\,,\,\Lambda\cap A_n^{-1}[-\Co]\cap a^{-1}[0]\big)\big|\leq\lim_{n\to\infty}\sum_{\ell\in a^{-1}[0]}|A_{n,\ell}|=0.
\end{align*}
Hence $\Tr(A_n,\Lambda\cap A_n^{-1}[-\Co])$ converges to $\Tr(a,\Lambda\cap a^{-1}[-\Co])$ as $n$ approaches infinity. Then (i) follows directly from the definition of $\pi_\Lambda$.

Now we prove (ii). Recall that $(w,a,\theta)\in \mathcal{T}_N^+$ only if $\re(w)$ and $\re(a_\ell)$ are positive for all $\ell\in P_N$. Then, for all $(w,a,\theta)\in \mathcal{T}_N^+$, we have $\pi(w,a,\theta)=w\in\Co^\circ$ and $a\in[\Co^\circ]^N$ since $\Co^\circ$ is nothing but the right half-plane. This proves $\mathcal{T}_N^+\subseteq \Di_N$.

Let $\delta=(w,a,\theta)$ be an arbitrary element in $\Di_N$, and let $\delta_0=(w_0,a_0,\theta_0)$ be the element with $w_0=1$, $a_{0\ell}=0$, and $\theta_{0\ell}=1/2$ for all $\ell\in P_N$. We claim that there exists a path in $\Di_N$ connecting $\delta$ and $\delta_0$. Indeed, we construct such a path by composing at most two line segments.
First consider the one joining $\delta$ and $\delta_1=(w_1,a_1,\theta_1)$, where $w_1=1$, $a_{1\ell}=\ind_{a^{-1}[0]}(\ell)$, and $\theta_{1\ell}=1/2$ for all $\ell\in P_N$. Here $\ind_{a^{-1}[0]}$ is the indicator function defined in \eqref{indicator.funct.}. Then consider the line segment joining $\delta_1$ and $\delta_0$. It can be easily shown that both lie entirely in $\Di_N$ proving thus our claim. Since $\delta$ is arbitrary, this implies that $\Di_N$ is path-connected.

To prove \eqref{interiorDi}, first note that
\begin{align*}
\big\{(w,a,\theta)\in\Di_N \, | \, a^{-1}[0]=\emptyset \big\}
&=\pi^{-1}[\Co^\circ]\cap \big(\C\times[\C\smallsetminus i\cdot\R]^N\times\R^N\big),
\end{align*}
where the set in the right-hand side is open since the restriction of $\pi$ to $\Di_N$ is continuous by virtue of (i). Then 
\begin{align*}
\{(w,a,\theta)\in\Di_N \, | \, a^{-1}[0]=\emptyset \}\subseteq \Dio_N.
\end{align*}
To prove the reverse inclusion, take $\delta=(w,a,\theta)\in\Di_N$ such that $a^{-1}[0]\not=\emptyset$. Then there exists $\ell\in P_N$ such that $a_\ell=0$, and so any neighborhood of $a_\ell$ will contain nonzero purely imaginary complex numbers. This implies that any neighborhood of $\delta$ will contain elements lying outside $\Di_N$. This proves \eqref{interiorDi}.

To prove the last statement of \eqref{bef.projection.lemma.Di}, note that we can easily write $\Dio_N$ as the disjoint union of the $\Dio_{N,\Lambda}$ ($\Lambda\in \PP_N$). Also, note that the map $v\mapsto v^{-1}[-\Co]$ defines a locally constant function from $[\C\smallsetminus i\cdot\R]^N$ to $\PP_N$, which implies that every $\Dio_{N,\Lambda}$ is open. Finally, we prove that each $\Dio_{N,\Lambda}$ is convex (and so simply-connected). Let $\Lambda\in\PP_N$ and define
\begin{align*}
\DD_{N,\Lambda}:=\big\{(w,a,\theta)\in\DD_N\,\big|\,a^{-1}[-\Co]=\Lambda\big\}.
\end{align*}
For any pair of elements $\delta_0=(w_0,a_0,\theta_0)$ and $\delta_1=(w_1,a_1,\theta_1)$ of $\Dio_{N,\Lambda}$, consider $\delta_t:=(1-t)\delta_0+t\delta_1$ for every $t\in[0,1]$. Since both $a_0$ and $a_1$ belong to $[\C\smallsetminus i\cdot\R]^N$ and satisfy $a_0^{-1}[-\Co]=a_1^{-1}[-\Co]=\Lambda$, it readily follows that $\delta_t$ is contained in $\DD_{N,\Lambda}$ for all $t\in[0,1]$. On the other hand, the restriction of $\pi$ to $\DD_{N,\Lambda}$ is given by $\pi(w,a,\theta)=w-\Tr(a,\Lambda)$, so $\pi(\delta_t)=(1-t)\pi(\delta_0)+t\pi(\delta_1)$ for all $t\in[0,1]$. Since both $\pi(\delta_0)$ and $\pi(\delta_1)$ lie in the convex set $\Co^\circ$, we obtain that $\pi(\delta_t)\in\Co^\circ$ for all $t\in[0,1]$.  This proves that the line segment joining $\delta_0$ and $\delta_1$ lies entirely in $\Dio_{N,\Lambda}$. 
\end{proof}

\begin{proposition}\label{projection.lemma}
Let $N\in\N$ and $\Lambda\in\PP_N$. Set $N(\Lambda):=N-|\Lambda|$.
\begin{enumerate}[$(i)$]
\item For any $\lambda\in\PP_{N(\Lambda)}$, we have $\pi_\lambda\pi_\Lambda=\pi_{\varphi(\lambda)\cup\Lambda}$,
where $\varphi$ is the unique strictly increasing bijection between $P_{N(\Lambda)}$ and $P_N\smallsetminus\Lambda$.
\item The function $\pi_\Lambda$ is surjective, and its restriction to $\Di_N$ induces a continuous surjective map $\pi_\Lambda:\Di_N\to\Di_{N(\Lambda)}$ that sends $\{(w,a,\theta)\in\Di_N \, | \, a^{-1}[0]=\Lambda\}$ onto $\Dio_{N(\Lambda)}$.
\item \label{projection.lemma.Re[pi]} The function $\re[\pi]:\DD_N\to\R$ given by $\re[\pi](w,a,\theta):=\re(\pi(w,a,\theta))$ is continuous and surjective.
\end{enumerate}
\end{proposition}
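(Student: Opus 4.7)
The plan is to handle the three parts successively, using the definitions of the projections in Definition~\ref{projectiondefinition}, the arithmetic of $\Tr$ from Lemma~\ref{Tr.prop.}, and the sequential continuity Lemma~\ref{bef.projection.lemma}~\eqref{bef.projection.lemma.limit}.

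For part~(i), I verify $\pi_\lambda\pi_\Lambda=\pi_{\varphi(\lambda)\cup\Lambda}$ coordinate by coordinate. The $a$- and $\theta$-coordinates compose straightforwardly: applying the ``forget the entries in $\Lambda$'' map and then the ``forget the entries in $\lambda$'' map yields the ``forget the entries in $\varphi(\lambda)\cup\Lambda$'' map. For the $w$-coordinate, the key ingredient is the set-theoretic identity $\varphi\bigl(\lambda\cap\widehat{a}_\Lambda^{-1}[-\Co]\bigr)=\varphi(\lambda)\cap a^{-1}[-\Co]$, which via Lemma~\ref{Tr.prop.}~\eqref{Tr.prop.desc.} gives $\Tr(\widehat{a}_\Lambda,\lambda\cap\widehat{a}_\Lambda^{-1}[-\Co])=\Tr(a,\varphi(\lambda)\cap a^{-1}[-\Co])$. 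Since $\varphi(\lambda)$ and $\Lambda$ are disjoint, Lemma~\ref{Tr.prop.}~\eqref{Tr.prop.linear} combines this with $\Tr(a,\Lambda\cap a^{-1}[-\Co])$ into $\Tr(a,(\varphi(\lambda)\cup\Lambda)\cap a^{-1}[-\Co])$, matching the prescription for $\pi_{\varphi(\lambda)\cup\Lambda}$. The degenerate cases where $\Lambda$, $\lambda$, or $\varphi(\lambda)\cup\Lambda$ is empty or equal to $P_N$ reduce to immediate direct checks.

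For part~(ii), given $(w',a',\theta')\in\DD_{N(\Lambda)}$ I build a preimage in $\DD_N$ by setting $a_\ell=0$ and $\theta_\ell=1/2$ for $\ell\in\Lambda$, copying $a'$ and $\theta'$ into the remaining coordinates, and taking $w=w'$. Since $0\notin-\Co$, the set $\Lambda\cap a^{-1}[-\Co]$ is empty, so $\pi_\Lambda$ returns $(w',a',\theta')$; the defining condition of $\DD_N$ holds because the choice $\theta_\ell=1/2$ handles indices in $\Lambda$, while indices outside $\Lambda$ inherit the analogous condition from $\DD_{N(\Lambda)}$. The same construction stays in $\Di_N$ when $(w',a',\theta')\in\Di_{N(\Lambda)}$: the $a$-coordinates avoid $i\R^*$ because $a'$ does, and an application of Lemma~\ref{Tr.prop.}~\eqref{Tr.prop.desc.} shows $\pi(w,a,\theta)=\pi(w',a',\theta')\in\Co^\circ$. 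Moreover, the preimage satisfies $a^{-1}[0]=\Lambda\cup\varphi\bigl((a')^{-1}[0]\bigr)$, which collapses to $\Lambda$ precisely when $(w',a',\theta')\in\Dio_{N(\Lambda)}$ by the description \eqref{interiorDi} recorded in Lemma~\ref{bef.projection.lemma}~\eqref{bef.projection.lemma.Di}. Continuity of $\pi_\Lambda|_{\Di_N}$ is then immediate from Lemma~\ref{bef.projection.lemma}~\eqref{bef.projection.lemma.limit} applied to sequences lying inside $\Di_N$.

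Part~(iii) is where I expect the only genuine subtlety. Surjectivity is trivial: the family $(r,0,(1/2,\ldots,1/2))$ with $r\in\R$ lies in $\DD_N$ and makes $\pi=r$. Continuity requires direct inspection, since $\pi$ itself is discontinuous wherever some $a_\ell\in i\R^*$. Fix any $(w,a,\theta)\in\DD_N$ and a convergent sequence $(W_n,A_n,\Theta_n)\to(w,a,\theta)$ in $\DD_N$. For indices with $\re(a_\ell)\neq 0$ the membership $\ell\in A_n^{-1}[-\Co]$ is eventually constant and agrees with $\ell\in a^{-1}[-\Co]$, so their contributions behave by continuity. For indices with $a_\ell=0$, the contribution to the trace is either $0$ or $A_{n,\ell}\to 0$, so vanishes in the limit (as in the proof of Lemma~\ref{bef.projection.lemma}~\eqref{bef.projection.lemma.limit}). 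The delicate case is $a_\ell\in i\R^*$: depending on the sign of $\re(A_{n,\ell})$ the index may toggle in and out of $A_n^{-1}[-\Co]$, but the contribution is either $0$ or $A_{n,\ell}\to a_\ell\in i\R$, and in either case its \emph{real part} tends to $\re(a_\ell)=0$; the same holds for the contribution of this index in the limit. Taking real parts therefore kills the discontinuity and gives $\re[\pi](W_n,A_n,\Theta_n)\to\re[\pi](w,a,\theta)$, which is the desired continuity.
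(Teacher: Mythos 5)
Your proposal is correct and follows essentially the same route as the paper: the same trace/$\varphi$ identities for the composition law in (i), the same section $(w',a'_\Lambda[0],\theta'_\Lambda[1/2])$ for surjectivity in (ii), and the same observation in (iii) that taking real parts annihilates the contributions of the indices with $a_\ell\in i\R$, which are exactly where $\pi$ fails to be continuous. The only cosmetic difference is that you argue (iii) index by index where the paper splits the trace via the set identity $a^{-1}[-\Co^\circ]=A_n^{-1}[-\Co]\smallsetminus a^{-1}[i\cdot\R]$.
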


\begin{proof}
To prove (i), we will make use of the identity
\begin{align}\label{varphi.prop.}
\varphi\big((v\varphi)^{-1}[S]\big)=v^{-1}[S]\smallsetminus\Lambda \qquad\qquad (v\in\C^N,\, S\subseteq\C).
\end{align}  
We assume that $\Lambda$ is a non-empty proper subset of $P_N$ and that $\lambda$ is non-empty since otherwise the equality $\pi_\lambda\pi_\Lambda=\pi_{\varphi(\lambda)\cup\Lambda}$ holds trivially. Note that 
\begin{align*}
&\Tr\big(a\,,\,\Lambda\cap a^{-1}[-\Co]\big)+\Tr\big(a\varphi\,,\,\lambda\cap(a\varphi)^{-1}[-\Co]\big)&  \\
&=\Tr\big(a\,,\,\Lambda\cap a^{-1}[-\Co]\big)+\Tr\big(a\,,\,\varphi(\lambda)\cap (a^{-1}[-\Co]\smallsetminus\Lambda)\big) & [\text{Lemma~\ref{Tr.prop.}~\eqref{Tr.prop.desc.} and \eqref{varphi.prop.}}]\\
&= \Tr\big(a\,,\,(\varphi(\lambda)\cup\Lambda)\cap a^{-1}[-\Co]\big) & [\text{by using \eqref{Tr.prop.sets.minus} repeatedly}] 
\end{align*}
for all $(w,a,\theta)\in\DD_N$. On the other hand, if $\lambda\not=P_{N(\Lambda)}$, let $N(\Lambda,\lambda):=N-|\Lambda|-|\lambda|$, and let $\overline{\varphi}$ be the strictly increasing bijection from $P_{N(\Lambda,\lambda)}$ onto $P_{N(\Lambda)}\smallsetminus\lambda$. Then it verifies that $\varphi\overline{\varphi}$ is the strictly increasing bijection from $P_{N(\Lambda,\lambda)}$ onto $P_N\smallsetminus\big(\varphi(\lambda)\cup\Lambda\big)$. Consequently, (i) follows from Definition~\ref{projectiondefinition}.

Now we prove (ii). We assume that $\Lambda$ is non-empty since otherwise the assertion holds trivially. Note that this assumption implies $N\geq1$. We first show that both $\pi_\Lambda$ and its restriction to $\Di_N$ are onto.

Suppose that $\Lambda=P_N$. For each $w\in\DD_0$, take the element $(w,a,\theta)\in\DD_N$ such that $a_\ell=0$ and $\theta_\ell=1/2$ for all $\ell\in P_N$. Then $\pi(w,a,\theta)=w$ by \eqref{pi}. Finally, in view of \eqref{Di_N} and \eqref{D_0}, we have that $(w,a,\theta)\in\Di_N$ if and only if $w\in\Di_0$.

Suppose that $\Lambda\not=P_N$. For each $v\in\C^{N(\Lambda)}$ and each $\alpha\in\C$, we define $v_{\Lambda}[\alpha]$ to be the unique element in $\C^{N}$ such that
\begin{itemize}
\item the coordinates of $v_{\Lambda}[\alpha]$ indexed by elements of $\Lambda$ are all equal to $\alpha$, and
\item the projection of $v_{\Lambda}[\alpha]$ onto the coordinates indexed by elements of $P_N\smallsetminus \Lambda$ is $v$.
\end{itemize}
Now, for each $(w,a,\theta)\in\DD_{N(\Lambda)}$, take the element $(w,a_{\Lambda}[0],\theta_{\Lambda}[1/2])\in\DD_N$. It follows that $\pi_{\Lambda}(w,a_{\Lambda}[0],\theta_{\Lambda}[1/2])=(w,a,\theta)$. Then, from (i) and \eqref{Di_N}, we get that $(w,a_{\Lambda}[0],\theta_{\Lambda}[1/2])\in\Di_N$ if and only if $(w,a,\theta)\in\Di_{N(\Lambda)}$.

It is clear that Lemma~\ref{bef.projection.lemma}~\eqref{bef.projection.lemma.limit} implies that $\pi_\Lambda:\Di_N\to\Di_{N(\Lambda)}$ is continuous. The last statement in (ii) is a consequence of the description of $\Dio_{N(\Lambda)}$ given in \eqref{interiorDi} and the definition of $\pi_\Lambda$.

Finally, to prove (iii), first note that $\re[\pi]$ is onto since it is the composition of two surjective functions. To prove continuity, let $\{(W_n,A_n,\Theta_n)\}_{n\in\N}$ be a sequence in $\DD_N$ converging to $(w,a,\theta)\in\DD_N$. Then we have
\begin{align*}
a^{-1}[-\Co^\circ]=A_n^{-1}[-\Co]\smallsetminus a^{-1}[i\cdot\R] \qquad\qquad \text{(for all big enough $n$).}
\end{align*}
Thus \eqref{Tr.prop.sets.minus} implies that
\begin{align*}
\Tr\big(A_n\,,\,A_n^{-1}[-\Co]\big)=\Tr\big(A_n\,,\,a^{-1}[-\Co^\circ]\big)+\Tr\big(A_n\,,\,A_n^{-1}[-\Co]\cap a^{-1}[i\cdot\R]\big)
\end{align*}
for all big enough $n$. Since $\Tr$ is continuous in the first variable, the first term in the right-hand side of the last equation converges to $\Tr(a,a^{-1}[-\Co^\circ])$ as $n\to+\infty$. For the second term we have
\begin{align*}
\lim_{n\to\infty}\Big|\re\Big(\Tr\big(A_n\,,\,A_n^{-1}[-\Co]\cap a^{-1}[i\cdot\R]\big)\Big)\Big|\leq\lim_{n\to\infty}\sum_{\ell\in a^{-1}[i\cdot\R]}\big|\re(A_{n,\ell})\big|=0.
\end{align*}
Hence 
\begin{align*}
\lim_{n\to\infty}\re\big(\Tr\big(A_n\,,\,A_n^{-1}[-\Co]\big)\big)=\re\big(\Tr\big(a,a^{-1}[-\Co^\circ]\big)\big)=\re\big(\Tr\big(a,a^{-1}[-\Co]\big)\big),
\end{align*}
and so $\re[\pi](W_n,A_n,\Theta_n)$ converges to $\re[\pi](w,a,\theta)$ as $n\to+\infty$, which proves continuity.
\end{proof}

\subsection{The integral representation of $\LN$} Now we establish the integral representation of $\LN_N$. First, in order to treat the case $N=0$, we set
\begin{align*}
F_0(u,w):=\e^{-uw} \qquad\qquad (u,w\in\C).
\end{align*}
Then an elementary computation shows that 
\begin{align*}
w^{-s}=\frac{1}{\Gamma(s)(\e(s)-1)}\int_{C(\epsilon)}F_0(u,w)u^{s-1}du \qquad\qquad (w\in\Di_0, \ s\in\C)
\end{align*}
for every small enough $\epsilon>0$, where the complex power $w^{-s}$ is defined using the principal branch of the logarithm. Consequently we define 
\begin{align*}
\LN_0:\C\times \Di_0\to\C, \qquad\qquad
\LN_0(s,w):=w^{-s},
\end{align*}
which is clearly a holomorphic function on $\C\times \Di_0$. 

\begin{proposition}\label{LN}
Let $N$ be a positive integer. The function $\LN_N:(\C\smallsetminus P_N)\times \Di_N\to\C$, 
\begin{align*}
\LN_N(s,w,a,\theta):=\frac{1}{\Gamma(s)(\e(s)-1)}\int_{C(\epsilon(w,a,\theta))}F_N(u,w,a,\theta)u^{s-1}du,
\end{align*}
is well-defined and independent of the choice of $\epsilon\in\E$. Furthermore, it satisfies the following properties.
\begin{enumerate}[$(i)$]
\item\label{LN.merom.} For any $(w,a,\theta)\in \Di_N$, the map $s\mapsto \LN_N(s,w,a,\theta)$ defines a meromorphic function on $\C$ having at most simple poles at $P_N$.
\item\label{LN.restr.} For any $(s,w,a,\theta)\in(\C\smallsetminus P_N)\times \mathcal{T}_N^+$, we have $\LN_N(s,w,a,\theta)=\zeta_N(s,w,a,\theta)$.
\item\label{LN.proj.} For any $(s,w,a,\theta)\in(\C\smallsetminus P_N)\times\Di_N$, we have
\begin{align*}
\LN_N(s,w,a,\theta)=\LN_{N(a^{-1}[0])}\big(s,\pi_{a^{-1}[0]}(w,a,\theta)\big)\cdot\prod_{\ell\in a^{-1}[0]}\big(1-\e(\theta_\ell)\big)^{-1}.
\end{align*}
\end{enumerate}
\end{proposition}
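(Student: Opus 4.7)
The plan is to follow the standard Hankel-contour route to the Barnes-type analytic continuation, while accommodating the enlarged parameter domain $\Di_N$. For well-definedness at fixed $(w,a,\theta)\in\Di_N$, the key observation is that $\Di_N$ excludes nonzero purely imaginary $a_\ell$, so as $u\to+\infty$ along the positive real axis each factor $(1-\e(\theta_\ell)\e^{-ua_\ell})^{-1}$ either tends to a constant ($\re(a_\ell)>0$ or $a_\ell=0$, the latter finite thanks to the $\DD_N$-condition) or is asymptotic to $-\e(-\theta_\ell)\e^{ua_\ell}$ ($\re(a_\ell)<0$). Combining these, $F_N(u,w,a,\theta)$ decays like $\e^{-u\pi(w,a,\theta)}$ up to a bounded factor, and since $\re\pi(w,a,\theta)>0$ the integrand is integrable on the tails, uniformly for $s$ in compact subsets of $\C$; hence the contour integral defines an entire function of $s$. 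Independence of $\epsilon\in\E$ is then a direct application of Cauchy's theorem, since the defining property of $\E$ ensures the annulus between any two admissible radii contains no singularities of $F_N(u,w,a,\theta)u^{s-1}$.

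For part \eqref{LN.merom.}, I would first observe that $\Gamma(s)(\e(s)-1)$ is entire with simple zeros precisely at the positive integers, because the simple poles of $\Gamma$ at $s=0,-1,-2,\ldots$ cancel the simple zeros of $\e(s)-1$ at those points. Hence $1/[\Gamma(s)(\e(s)-1)]$ is meromorphic with simple poles exactly at the positive integers. At $s=k\in\Z_{>0}$ the single-valuedness of $u^{k-1}$ collapses the two rectilinear halves of $C(\epsilon)$ to zero, reducing the contour integral to $\oint_{|u|=\epsilon}F_N(u,w,a,\theta)u^{k-1}du=2\pi i\cdot\mathrm{coeff}(F_N(u,w,a,\theta),u^{-k})$. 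Since the $\DD_N$-condition forbids any factor with $a_\ell=0$ from contributing a pole at $u=0$, the order of that pole equals $|\{\ell\in P_N:a_\ell\neq 0,\,\theta_\ell\in\Z\}|\leq N$, so this coefficient vanishes for $k>N$. The integer points $k>N$ are therefore removable singularities of $\LN_N$, leaving at most simple poles on $P_N=\{1,\ldots,N\}$.

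For part \eqref{LN.restr.}, on $\mathcal{T}_N^+\subseteq\Di_N$ the classical Mellin-transform computation applies. For $\re(s)>N$ the small-circle contribution to $\int_{C(\epsilon)}F_N(u,w,a,\theta)u^{s-1}du$ vanishes as $\epsilon\to 0^+$ (since $F_Nu^{s-1}=O(u^{s-N-1})$ there), so the contour integral reduces to $(\e(s)-1)\int_0^\infty F_N(u,w,a,\theta)u^{s-1}du$. Dividing by $\Gamma(s)(\e(s)-1)$ yields the Mellin representation, and expanding each $(1-\e(\theta_\ell)\e^{-ua_\ell})^{-1}$ as a geometric series (valid because $\re(a_\ell)>0$) and interchanging sum with integral reproduces the Dirichlet series defining $\zeta_N(s,w,a,\theta)$. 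Equality on $\re(s)>N$ then extends to all of $(\C\smallsetminus P_N)\times\mathcal{T}_N^+$ by the identity theorem applied to the common meromorphic continuations in $s$.

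For part \eqref{LN.proj.}, set $\Lambda:=a^{-1}[0]$. Since $0\notin-\Co$, we have $\Lambda\cap a^{-1}[-\Co]=\emptyset$, so Definition~\ref{projectiondefinition} gives $\pi_\Lambda(w,a,\theta)=(w,\widehat{a}_\Lambda,\widehat{\theta}_\Lambda)$, and this lies in $\Dio_{N(\Lambda)}\subseteq\Di_{N(\Lambda)}$ by Proposition~\ref{projection.lemma}. For $\ell\in\Lambda$ the factor $(1-\e(\theta_\ell)\e^{-ua_\ell})^{-1}=(1-\e(\theta_\ell))^{-1}$ is constant in $u$, so
\[
F_N(u,w,a,\theta)=\prod_{\ell\in\Lambda}(1-\e(\theta_\ell))^{-1}\cdot F_{N(\Lambda)}\big(u,\pi_\Lambda(w,a,\theta)\big),
\]
and any $\epsilon\in\E_N$ serves as an admissible radius for the smaller problem (the defining condition is vacuous for $\ell\in\Lambda$), so inserting this factorization into the contour integral produces the stated identity. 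The main obstacle I anticipate is the pole-counting in part \eqref{LN.merom.}: one must justify carefully both that $\Gamma(s)(\e(s)-1)$ has zeros exactly at positive integers and that the order of the pole of $F_N$ at $u=0$ is genuinely bounded by $N$. Once this is in place, parts \eqref{LN.restr.} and \eqref{LN.proj.} are largely bookkeeping.
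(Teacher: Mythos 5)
Your proposal is correct and follows essentially the same route as the paper: exponential decay of $F_N$ on the real tail via $\re\,\pi(w,a,\theta)>0$ plus Cauchy's theorem for $\epsilon$-independence, vanishing of the contour integral at integers $k>N$ (via the order-$\leq N$ pole of $F_N$ at $u=0$) to locate the poles in $P_N$, the classical Mellin/Hankel computation for the restriction to $\mathcal{T}_N^+$ (which the paper simply cites as \eqref{anal.cont.zeta}), and factoring out the constant factors with $a_\ell=0$ for part $(iii)$. No gaps; your treatment is if anything more explicit than the paper's.
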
 

\begin{proof}
Take $\epsilon\in\E$ and $(w,a,\theta)\in\Di_N$. We claim that the integral 
\begin{align}\label{Integral}
\IN_N(s,w,a,\theta):=\int_{C(\epsilon(w,a,\theta))}F_N(u,w,a,\theta)u^{s-1}du \qquad\qquad (s\in\C)
\end{align}
defines a holomorphic function of $s$ on $\C$ independent of the choice of $\epsilon$ in $\E$. Indeed, conditions $a\in[\C\smallsetminus i\cdot\R^*]^N$ (see \eqref{Di_N}) and $a^{-1}[0]\subseteq\theta^{-1}[\R\smallsetminus\Z]$ (see \eqref{good.domain}), together with Definition~\ref{Epsilon}, imply that $F_N(u,w,a,\theta)$ is a regular function of $u$ on $C(\epsilon(w,a,\theta))$. Condition $\pi(w,a,\theta)\in\Co^\circ$ implies that the absolute value of $F_N(u,w,a,\theta)$ decays exponentially as $u$ goes to $+\infty$ along the real line. Hence $\IN_N(s,w,a,\theta)$ is a holomorphic function of $s$ on $\C$. To prove that it is independent of the choice of $\epsilon$ in $\E$, first note that $u=0$ is the only possible singularity of $F_N(u,w,a,\theta)$ having absolute value $\leq\epsilon'(w,a,\theta)$, for all $\epsilon'\in \E$. Then we can use Cauchy's integral formula to finish the proof of our claim. 

The above paragraph says that the singularities of $s\mapsto\LN_N(s,w,a,\theta)$ come from the factor $\Gamma(s)^{-1}(\e(s)-1)^{-1}$. Since the residue theorem implies that $\IN_N(k,w,a,\theta)=0$ for every integer $k\geq N+1$, we have that those singularities are at most simple poles lying in $P_N$. This proves (i). 

Assertion (ii) follows from \eqref{anal.cont.zeta} and Lemma~\ref{bef.projection.lemma}~\eqref{bef.projection.lemma.Di}, while (iii) follows readily from the definitions of $F_N$ (see \eqref{F_N}) and $\LN_N$.
\end{proof}

\subsection{Transformation formula} In this subsection we prove that $\Di_N$ is invariant under $\mathfrak{T}_N\mathfrak{R}_N$-transformations, and we provide the corresponding transformation formula for $\LN_N$. In addition, even though the group $\C^*$ does not act on $\Di_N$, as it is clear from definition \eqref{Di_N}, we derive a transformation formula for $\LN_N$ under $\C^*$-transformations from assuming certain conditions.

\begin{proposition}\label{T_NR_N.trans.form.}
Let $N$ be a positive integer. 
\begin{enumerate}[$(i)$] 
\item\label{T_NR_N.trans.form.act.} The group $\mathfrak{T}_N\mathfrak{R}_N$ acts on both $\Di_N$ and $\Dio_N$ by homeomorphisms. Furthermore, $T_\Lambda\Dio_{N,\emptyset}=T_\Lambda \mathcal{T}_N^+=\Dio_{N,\Lambda}$ for all $\Lambda\in\PP_N$; in other words, $\mathcal{T}_N^+$ is a fundamental domain for  the $\mathfrak{T}_N$-action on $\Dio_N$.
\item\label{T_NR_N.trans.form.trans.} Let $\Lambda\in\PP_N$ and $\sigma\in S_N$. Set $g=T_\Lambda R_\sigma$, and let $J_g(\theta)=J_g(s,\theta)$ be the element defined in \eqref{J_g}. Then 
\begin{align*}
\LN_N\big(s,g(w,a,\theta)\big)=J_g(\theta)\LN_N(s,w,a,\theta)
\end{align*}
for all $s\in \C\smallsetminus P_N$ and all $(w,a,\theta)\in\Di_N$.
\item\label{T_NR_N.trans.form.hol.} For each $\theta\in\R^{N}$ and each $\Lambda\in\PP_N$, the map $(s,w,a)\mapsto\LN_N(s,w,a,\theta)$ defines a holomorphic function on $(\C\smallsetminus P_N)\times p(\Dio_{N,\Lambda})$, with $p$ as described in \eqref{proj.p}.
\end{enumerate}
\end{proposition}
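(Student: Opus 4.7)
The plan is to prove (i) by direct verification of the three defining conditions of $\Di_N$, (ii) by substituting the transformation formulas for $F_N$ into the contour integral in Proposition~\ref{LN}, and (iii) by reducing holomorphicity on each $\Dio_{N,\Lambda}$ to the already-known case $\Lambda = \emptyset$ via the formula from (ii).

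For (i), I would check that the three conditions defining $\Di_N$ in \eqref{Di_N} are preserved under both $T_\Lambda$ and $R_\sigma$. The condition $\pi(w,a,\theta) \in \Co^\circ$ is preserved because $T_\Lambda, R_\sigma \in \Aut(\DD_N)_\pi$ by Lemmas~\ref{lemma.R_sigma} and~\ref{lemma.T_Lambda}. The condition $a \in [\C\smallsetminus i\cdot \R^*]^N$ is preserved because $R_\sigma$ only permutes entries of $a$ while $T_\Lambda$ only flips signs, neither of which can produce a nonzero purely imaginary entry from one that is not. Since $T_\Lambda$ and $R_\sigma$ are already homeomorphisms of $\DD_N$, they restrict to homeomorphisms of $\Di_N$, and hence preserve its interior $\Dio_N$. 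For the identity $T_\Lambda(\mathcal{T}_N^+) = \Dio_{N,\Lambda}$, I would check directly: if $a_\ell \in \Co^\circ$ for all $\ell$, then $(ad(\Lambda))_\ell \in -\Co^\circ$ precisely when $\ell \in \Lambda$, so $T_\Lambda(w,a,\theta) \in \Dio_{N,\Lambda}$; the reverse inclusion uses $T_\Lambda^2 = \mathrm{id}$.

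For (ii), since $\LN_N$ is independent of the choice of $\epsilon \in \E$ by Proposition~\ref{LN}, I can freely replace $\epsilon$ by $\epsilon g$, which again lies in $\E$ by Lemmas~\ref{lemma.R_sigma} and~\ref{lemma.T_Lambda}. Combining the two $F_N$-transformation laws from those lemmas gives
\[
F_N\big(u, T_\Lambda R_\sigma(w,a,\theta)\big) = (-1)^{|\Lambda|}\, \e\big(\Tr(\theta r(\sigma),\Lambda)\big)\, F_N(u,w,a,\theta),
\]
and Lemma~\ref{Tr.prop.}(iii), applied to $\theta r(\sigma) = (\theta_{\sigma^{-1}(1)},\dots,\theta_{\sigma^{-1}(N)})$, yields $\Tr(\theta r(\sigma),\Lambda) = \Tr(\theta,\sigma^{-1}(\Lambda))$, matching the definition \eqref{J_g} of $J_g(\theta)$ with $\alpha = 1$. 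Substituting into the integral definition of $\LN_N$ completes~(ii).

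For (iii), I would first treat $\Lambda = \emptyset$, where $\Dio_{N,\emptyset} = \mathcal{T}_N^+$ and $\LN_N = \zeta_N$ by Proposition~\ref{LN}(ii); joint holomorphicity in $(s,w,a)$ on $(\C\smallsetminus P_N)\times p(\mathcal{T}_N^+)$ follows from the contour-integral representation by observing that around each $(w_0,a_0)$ a single $\epsilon>0$ works throughout a neighborhood, $F_N(u,w,a,\theta)u^{s-1}$ is jointly holomorphic in $(s,w,a)$ away from the contour, and $\re(w),\re(a_\ell)>0$ guarantees exponential decay in $u$, so differentiation under the integral applies. For general $\Lambda$, the formula $\LN_N(s,w,a,\theta) = J_{T_\Lambda}(\theta)^{-1}\LN_N\big(s, T_\Lambda(w,a,\theta)\big)$ from (ii), together with the fact that $(w,a) \mapsto (w - \Tr(a,\Lambda), ad(\Lambda))$ is an affine biholomorphism from $p(\Dio_{N,\Lambda})$ onto $p(\mathcal{T}_N^+)$ and that $J_{T_\Lambda}(\theta)$ is constant in $(s,w,a)$, finishes the argument. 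The main obstacle I foresee lies in this last step: because the radius $\epsilon$ of the Hankel contour depends on $(w,a,\theta)$, asserting joint holomorphicity in $(s,w,a)$ requires a local continuity argument showing that $\epsilon$ can be chosen uniformly on a neighborhood; the remaining bookkeeping, in particular tracking the convention $\theta r(\sigma) = (\theta_{\sigma^{-1}(1)},\dots,\theta_{\sigma^{-1}(N)})$ consistently in part~(ii), is routine.
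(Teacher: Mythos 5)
Your proposal is correct and follows essentially the same route as the paper: (i) by checking invariance of the defining conditions of $\Di_N$ via Lemmas~\ref{lemma.R_sigma} and~\ref{lemma.T_Lambda}, (ii) by feeding the $F_N$-transformation laws into the contour integral, and (iii) by reducing to $\Lambda=\emptyset$ through the formula from (ii) and the affine change of variables $T_\Lambda$. The only minor deviation is that for the base case of (iii) you re-derive joint holomorphicity of $\zeta_N$ in $(s,w,a)$ by differentiating under the integral, whereas the paper simply invokes the known holomorphicity of $\zeta_N$ on $(\C\smallsetminus P_N)\times p(\Ds_N)$; both are fine.
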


\begin{proof}
We first prove \eqref{T_NR_N.trans.form.act.}. In view of Lemmas \ref{lemma.R_sigma} and \ref{lemma.T_Lambda}, it verifies that $\pi T_\Lambda R_\sigma(\delta)=\pi(\delta)\in\Co^\circ$ for all $\Lambda\in\PP_N$, $\sigma\in S_N$, and $\delta\in\Di_N$. On the other hand, we have that both $[\C\smallsetminus i\cdot\R^*]^N$ and $[\C\smallsetminus i\cdot\R]^N$ are invariant under the right action of the matrices $r(\sigma)$ and $d(\Lambda)$ ($\Lambda\in\PP_N$, $\sigma\in S_N$). Then the fact that $\mathfrak{T}_N\mathfrak{R}_N$ acts on both $\Di_N$ and $\Dio_N$ follows immediately from \eqref{Di_N} and \eqref{interiorDi}. The last assertion in \eqref{T_NR_N.trans.form.act.} is a consequence of Lemma \ref{bef.projection.lemma} \eqref{bef.projection.lemma.Di}. Indeed, the latter implies that the $T_\Lambda$ permute the connected components of $\Dio_N$.

\eqref{T_NR_N.trans.form.trans.} is a consequence of Proposition~\ref{LN}, Lemma~\ref{lemma.R_sigma}, and Lemma~\ref{lemma.T_Lambda}. Finally, to prove \eqref{T_NR_N.trans.form.hol.}, first note that \eqref{T_NR_N.trans.form.act.}, \eqref{T_NR_N.trans.form.trans.}, and Proposition \ref{LN} \eqref{LN.restr.} imply that
\begin{align*}
\LN_N(s,w,a,\theta)=J_{T_\Lambda}(\theta)^{-1}\zeta_N(s,T_\Lambda(w,a,\theta)) \qquad \big(s\in\C\smallsetminus P_N,\,(w,a)\in p(\Dio_{N,\Lambda})\big).
\end{align*}
Hence the map $(s,w,a)\mapsto \LN_N(s,w,a,\theta)$ defines a holomorphic function since $(s,w,a)\mapsto(s,pT_\Lambda(w,a,\theta))$ is a smooth change of variables and the map $(s,w,a)\mapsto \zeta_N(s,w,a,\theta')$ defines a holomorphic function for any $\theta'\in\R^N$.
\end{proof}

An inspection of the denominator of the test function $F_N$ in \eqref{F_N} shows us that, for any $(w,a,\theta)\in\DD_N$, the $u$-singularities of $F_N(u,w,a,\theta)$ are poles located at 
\begin{align}\label{u-poles.F_N}
\mathcal{P}(a,\theta):=\Big\{u\in\C\,\Big|\,u=u_\ell(m):=\frac{2\pi i}{a_\ell}(m+\theta_\ell), \quad \ell\in a^{-1}[\C^*], \quad m\in\Z\Big\}.
\end{align}
Thus, if we fix any $\ell\in a^{-1}[\C^*]$, they lie along the same straight line passing through the origin as $m$ varies. 

For any nonzero angle $\psi\in[-\pi,\pi]$, recall the set $\mathcal{P}_\psi(a,\theta)$ of all nonzero elements $u\in\mathcal{P}(a,\theta)$ such that either $\arg(u)\in[0,\psi)$ if $\psi>0$, or $\arg(u)\in[\psi,0)$ otherwise. Also, we define 
\begin{align}\label{Di_N(psi)}
\Di_N(\psi):=\Di_N\,\cap\, M_{\e^{i\psi}}^{-1}[\Di_N]\,\cap\,\bigcap_{0<\mathrm{sgn}(\psi)\cdot t<|\psi|}(\pi M_{\e^{it}})^{-1}[\Co].
\end{align}
Thus, for example, $\Di_N(0)=\Di_N$.

\begin{proposition}\label{transf.form.rot.prop.}
Let $N$ be a positive integer, and let $\alpha\in\C^*$. Take $\psi\in[-\pi,\pi]$ such that $\psi\in \arg(\alpha)+2\pi\Z$, and write $\alpha^{-s}=\e^{-s(\log|\alpha|+i\psi)}$. Then, for all $s\in\C\smallsetminus P_N$ and all $(w,a,\theta)\in\Di_N(\psi)$, we have 
\begin{align}\label{transf.form.rot.}
\LN_N\big(s,M_{\alpha}(w,a,\theta)\big)=\alpha^{-s}\Big[\LN_N(s,w,a,\theta)+\rho_N^\psi(s,w,a,\theta)/\Gamma(s)\Big],
\end{align}
where $\rho_N^\psi(s,w,a,\theta)$ is the $s$-analytic continuation of the function 
\begin{align} \label{rho_N} 
\rho_N^\psi(s,w,a,\theta):=\mathrm{sgn}(-\psi)2\pi i\lim_{R\to\infty}\sum_{\substack{u_0\in\mathcal{P}_\psi(a,\theta)\\ |u_0|<R}}\res_{u=u_0}\big(F_N(u,w,a,\theta)u^{s-1}\big)
\end{align}
holomorphic for $\re(s)<1$. Here we use the principal branch of the logarithm to define $u^{s-1}$. 
\end{proposition}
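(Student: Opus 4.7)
The plan is to bring $\LN_N(s,M_\alpha(w,a,\theta))$ to an integral over a \emph{rotated} Hankel contour via Lemma~\ref{lemma.M_alpha} and a change of variables, and then deform this rotated contour back to a standard Hankel contour, picking up residues that assemble into $\rho_N^\psi$. Concretely, Lemma~\ref{lemma.M_alpha} gives $F_N(u,M_\alpha\delta)=F_N(\alpha u,\delta)$, and substituting $v=\alpha u$ in the integral representation of $\LN_N(s,M_\alpha\delta)$ supplied by Proposition~\ref{LN} produces
\begin{align*}
\LN_N(s,M_\alpha\delta)=\frac{\alpha^{-s}}{\Gamma(s)(\e(s)-1)}\int_{\alpha\cdot C(\epsilon(M_\alpha\delta))}F_N(v,\delta)v^{s-1}dv,
\end{align*}
where the rotated contour $\alpha\cdot C(\epsilon(M_\alpha\delta))$ has its two rays at arguments $\psi$ and $\psi+2\pi$, and the identity $u^{s-1}\,du=\alpha^{-s}v^{s-1}\,dv$ (with the branch dictated by Definition~\ref{Epsilon}) follows from the prescribed normalization $\alpha^{-s}=\e^{-s(\log|\alpha|+i\psi)}$.

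The next step is to deform this rotated contour onto a standard Hankel contour $C(\epsilon'(\delta))$ for a suitable $\epsilon'\in\E$. The hypothesis $(w,a,\theta)\in\Di_N(\psi)$ ensures $\pi M_{\e^{it}}\delta\in\Co$ for every $t$ strictly between $0$ and $\psi$, which is precisely the condition guaranteeing that $F_N(v,\delta)v^{s-1}$ decays fast enough along every intermediate ray so that arcs at infinity contribute nothing when the contour is swung through these angles. During the sweep, the outgoing ray (branch $\Arg v\in[0,2\pi)$) crosses the nonzero $v$-poles of $F_N(v,\delta)$ sitting in the sector $\arg v\in[0,\psi)$ (or $[\psi,0)$ if $\psi<0$), and the incoming ray (branch $\Arg v\in[2\pi,4\pi)$) crosses the same physical poles but on the second branch, contributing an extra factor $\e(s-1)=\e(s)$. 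Combining these residues with the orientation sign $\mathrm{sgn}(-\psi)$ yields $(\e(s)-1)\,\rho_N^\psi(s,\delta)$ as the total residue contribution, and dividing by $\Gamma(s)(\e(s)-1)$ produces the claimed identity \eqref{transf.form.rot.}.

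What remains is to justify the convergence of the residue series defining $\rho_N^\psi(s,\delta)$ for $\re(s)<1$ and its meromorphic continuation to $\C\smallsetminus P_N$. Convergence rests on an explicit bound: the pole $u_\ell(m)=2\pi i(m+\theta_\ell)/a_\ell$ has $|u_\ell(m)|\sim |m|$, so the branch factor contributes $|m|^{\re(s)-1}$, while the surviving factors $\e^{-u_\ell(m)w}\prod_{k\neq\ell}(1-\e(\theta_k)\e^{-u_\ell(m)a_k})^{-1}$ stay bounded (and are in fact exponentially small on at least one side of infinity) because of $\pi M_{\e^{it}}\delta\in\Co$. The meromorphic continuation is then automatic: the left-hand side of \eqref{transf.form.rot.} is already meromorphic on $\C\smallsetminus P_N$ by Proposition~\ref{LN}\eqref{LN.merom.}, and the transformation identity itself provides the analytic continuation of $\rho_N^\psi(s,\delta)$ to this domain.

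The main obstacle I expect is the contour-deformation step. One must simultaneously track both branches of $v^{s-1}$ on either side of the cut, handle possible coalescences of the poles $u_\ell(m)=u_k(m')$ for different $\ell,k$ (producing higher-order poles in $F_N$ that require a more delicate residue computation), and verify that the partial sums over $\{u_0\in\mathcal{P}_\psi(a,\theta):|u_0|<R\}$ prescribed by \eqref{rho_N} match those generated by the deformation, so that the limit $R\to\infty$ is legitimate uniformly in $s$ on compact subsets of $\{\re(s)<1\}$.
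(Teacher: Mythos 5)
Your outline follows the paper's own proof: rotate the Hankel contour via $F_N(u,M_\alpha\delta)=F_N(\alpha u,\delta)$ and the substitution $v=\alpha u$, sweep it back to the standard contour, collect the residues in the swept sector with the relative factor $\e(s)$ between the two branches of $v^{s-1}$, and obtain the continuation of $\rho_N^\psi$ to $\C\smallsetminus P_N$ from the identity itself (the paper merely factors $\alpha=|\alpha|\e^{i\psi}$ and disposes of the radial part by a trivial change of variables first, which is cosmetic). There is, however, a genuine gap in your estimates, and it sits at exactly the delicate point. Membership in $\Di_N(\psi)$ only guarantees $\pi M_{\e^{it}}(w,a,\theta)\in\Co$ at the intermediate angles $t$, and $\Co$ contains $i\R_+$: the quantity $f_{w,a}(t)=\re\big((\pi M_{\e^{it}})(w,a,\theta)\big)$ is allowed to vanish, and it can do so precisely at the finitely many angles $t_i$ where some $\e^{it}a_\ell$ becomes purely imaginary --- that is, precisely the directions along which the poles of $F_N(\cdot,w,a,\theta)$ accumulate. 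So it is not true that the integrand ``decays fast enough along every intermediate ray'': on a critical ray the only remaining decay is $|u|^{\re(s)-1}$. The paper copes with this by observing that $f''_{w,a}=-f_{w,a}$ on each open subinterval $(t_{i-1},t_i)$, so $f_{w,a}$ is concave there and can vanish only at the endpoints; this yields $\int\e^{-R_nf_{w,a}(t)}dt=O(R_n^{-1})$ and hence large arcs of size $O(R_n^{\re(s)-1})$. This is where the hypothesis $\re(s)<1$ actually enters the deformation step, not only the residue sum.

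The same phenomenon defeats your convergence argument for \eqref{rho_N}: along a critical ray with $f_{w,a}(t_i)=0$ the residues are of size comparable to $|u_0|^{\re(s)-1}$, so the series is not absolutely convergent for $0\le\re(s)<1$, and the bound ``$|m|^{\re(s)-1}$ times bounded factors'' proves nothing in that range. The limit in \eqref{rho_N} is a limit of partial sums ordered by modulus (taken over radii $R_n$ avoiding the poles), and its existence for $\re(s)<1$ is itself extracted from the residue-theorem computation: each partial sum equals a difference of contour integrals that converges as $R_n\to\infty$. You half-acknowledge this in your closing paragraph, but the proof must be organized so that the deformation \emph{supplies} the convergence rather than presupposing it. By contrast, your worry about coalescing poles is harmless: \eqref{rho_N} sums residues at the points of $\mathcal{P}_\psi(a,\theta)$, so higher-order poles are simply absorbed into the residue at each point.
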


\begin{proof}
It is easy to see that we can divide the proof into two stages by writing $\alpha$ in polar coordinates. Hence, we will first assume that $\alpha$ is a positive real number, and then we will assume that it is a complex number of norm 1.

Suppose that $\alpha$ is a positive real number, so $\psi=0$. Then, in view of Lemma~\ref{lemma.M_alpha}, we obtain
\begin{align*}
\LN_N\big(s,M_{\alpha}(w,a,\theta)\big)=\alpha^{-s}\LN_N(s,w,a,\theta) \qquad\qquad (\alpha\in\R_+)
\end{align*} 
by using the change of variable $\alpha u=u'$ directly in the definition of $\LN_N$ (see Proposition~\ref{LN}). This proves the proposition in the first case.

Suppose that $\alpha$ lies in the unit circle and $\alpha\not=1$, so $\alpha=\e^{i\psi}$ with $\psi\not=0$. Also, suppose that $\re(s)<1$. For any $R>0$, let $C(R,\psi)$ be the counterclockwise oriented circular arc having endpoints at $R$ and $R\e^{i\psi}$. Recall the integral $\IN_N$ defined in \eqref{Integral}, which satisfies
\begin{align}
& \nonumber \IN_N(s,w,a,\theta)=\Gamma(s)(\e(s)-1)\LN_N(s,w,a,\theta) \qquad \mathrm{and} \\
& \label{I_N&L_N} \IN_N(s,M_{\e^{i\psi}}(w,a,\theta))=\Gamma(s)(\e(s)-1)\LN_N(s,M_{\e^{i\psi}}(w,a,\theta))
\end{align}
since both $(w,a,\theta)$ and $M_{\e^{i\psi}}(w,a,\theta)$ belong to $\Di_N$. Take any $\epsilon\in\E$, and write $\epsilon'=(\epsilon M_{\e^{i\psi}})(w,a,\theta)$ for simplicity. In view of Lemma~\ref{lemma.M_alpha}, we can use $\epsilon'$ to compute both integrals in \eqref{I_N&L_N}. Then, using Lemma~\ref{lemma.M_alpha} again, an elementary computation shows that 
\begin{align*}
& \IN_N(s,M_{\e^{i\psi}}(w,a,\theta))-\e^{-i\psi s}\IN_N(s,w,a,\theta)\\
& = \e^{-i\psi s}(1-\e(-s))\Big(\int_{\epsilon'\e^{i\psi}}^{\infty \e^{i\psi}}-\int_{\epsilon'}^\infty-\int_{C(\epsilon',\psi)}\Big)F_N(u,w,a,\theta)u^{s-1}du
\end{align*}
if $\psi<0$, where the argument of $u$ is considered to lie in the interval $(0,2\pi]$ to define $u^{s-1}$. By the same token, we have 
\begin{align*}
& \IN_N(s,M_{\e^{i\psi}}(w,a,\theta))-\e^{-i\psi s}\IN_N(s,w,a,\theta)\\
& = \e^{-i\psi s}(1-\e(s))\Big(\int_{\epsilon'}^\infty-\int_{\epsilon'\e^{i\psi}}^{\infty \e^{i\psi}}-\int_{C(\epsilon',\psi)}\Big)F_N(u,w,a,\theta)u^{s-1}du
\end{align*}
if $\psi>0$, where now the argument of $u$ is considered to lie in $[0,2\pi)$. We remark that no poles of $F_N(u,w,a,\theta)$ lie on the integration contours since $(w,a,\theta)\in\Di(\psi)$.

Now our objective is to use the residue theorem in order to compute the right-hand sides of the last two equations. Take an increasing sequence $\{R_n\}_{n\in\N}$ of positive real numbers approaching to $+\infty$ such that all circles of radius $R_n$ ($n\in\N$) centered at the origin are away from the $u$-singularities of $F_N(u,w,a,\theta)$ (see \eqref{u-poles.F_N}). We claim that
\begin{align*}
\lim_{n\rightarrow\infty}\int_{C(R_n,\psi)}F_N(u,w,a,\theta)u^{s-1}du=0,
\end{align*}
where the argument of $u$ in $u^{s-1}$ is considered to lie either in $(0,2\pi]$ if $\psi<0$, or in $[0,2\pi)$ otherwise. Indeed,  there exists a positive real number $K=K(\psi,w,a,\theta)$, not depending neither on $t$ nor on $n$, such that 
\begin{align*}
|F_N(R_n\e^{it},w,a,\theta)|\leq K\cdot \e^{-R_n\cdot f_{w,a}(t)}, \qquad f_{w,a}(t):=\re\big((\pi M_{\e^{it}})(w,a,\theta)\big),
\end{align*}
for all $t\in\R$ with $0\leq\mathrm{sgn}(\psi)\cdot t \leq|\psi|$. Suppose that $\psi<0$. Hence the zeroes of $\re(\e^{it}a_\ell)$ give us a subdivision 
\begin{align*}
\psi=t_0<t_1<\dots<t_{d-1}<t_d=0
\end{align*} 
of $[\psi,0]$ such that 
\begin{align*}
\e^{-it}(\pi M_{\e^{it}})(w,a,\theta)=w-\Tr\big(a,(\e^{it}a)^{-1}[-\Co]\big)
\end{align*}
is constant and non-zero on each  $(t_{i-1},t_{i}]$ as a function of $t$. This implies that $f''_{w,a}(t)=-f_{w,a}(t)$ on each $(t_{i-1},t_{i})$. Therefore, in view of \eqref{Di_N(psi)}, we conclude that $f_{w,a}(t)$ is concave on each $(t_{i-1},t_{i})$, and that it can only vanish at the $t_i$ ($1\leq i\leq d-1$). As a consequence, we can construct a polygonal chain in the plane lying below the graph of $f_{w,a}:[\psi,0]\to\R$ in order to estimate the integral in the right hand side of
\begin{align*}
\Big|\int_{C(R_n,\psi)}F_N(u,w,a,\theta)u^{s-1}du\Big|\leq K\cdot R_n^s\cdot\int_\psi^0 \e^{-R_n\cdot f_{w,a}(t)}\e^{-\im(s)\cdot t}dt.
\end{align*}
In fact, this integral turns out to be $O(R_n^{-1})$ as $n\to+\infty$. Then our claim follows by taking the limit as $n\to+\infty$. Similarly, we obtain the same result when $\psi>0$.

The residue theorem implies that 
\begin{align*}
& \IN_N(s,M_{\e^{i\psi}}(w,a,\theta))-\e^{-i\psi s}\IN_N(s,w,a,\theta)\\
& = \e^{-i\psi s}(\e(s)-1) \mathrm{sgn}(-\psi)2\pi i\lim_{R\to\infty} \sum_{\substack{u_0\in\mathcal{P}_\psi(a,\theta)\\ |u_0|<R}}\res_{u=u_0}\big(F_N(u,w,a,\theta)u^{s-1}\big)
\end{align*}
for $\re(s)<1$, where the argument of $u$ in $u^{s-1}$ is considered to lie in $(-\pi,\pi]$. From the proof of Proposition~\ref{LN}~\eqref{LN.merom.}, we know that the left-hand side of this equation is actually an entire function of $s$ having zeroes at the integers greater than $N$, and now we have proved that it also has zeroes at the non-positive integers. Then $\rho_N^\psi(s,w,a,\theta)$, as defined in \eqref{rho_N}, is holomorphic for $\re(s)<1$, and it can be analytically continued to $\C\smallsetminus P_N$. Finally, dividing the last equation by $\Gamma(s)(\e(s)-1)$, and using \eqref{I_N&L_N}, the transformation formula \eqref{transf.form.rot.} holds for all $s\in\C\smallsetminus P_N$. 
\end{proof}

Before addressing Proposition \ref{prop.fixed.points}, we describe a handy subset of the domain 
\begin{align*}
\D_N=\bigcap_{\alpha\in\C^*}(\pi M_\alpha)^{-1}[\Co]
\end{align*}
given in \eqref{D.final}. In fact, we define
\begin{align}\label{D^*}
\D_{N,\text{Pol}}:=\big\{(w,a,\theta)\in\DD_N\,\big|\, w=x\cdot a, \quad x\in[0,1]^N\smallsetminus\{0,1\}^N \big\}.
\end{align}
It can be easily seen that $\D_{N,\text{Pol}}\subseteq \D_N$ by using the convexity of $\Co$. Indeed, for any $\alpha\in\C^*$ and any $(x\cdot a,a,\theta)\in\D_{N,\text{Pol}}$, we have
\begin{align*}
(\pi M_\alpha)(x\cdot a,a,\theta)=\sum_{\ell\in(\alpha a)^{-1}[\Co]}x_\ell(\alpha a_\ell)-\sum_{\ell\in(\alpha a)^{-1}[-\Co]}(1-x_\ell)(\alpha a_\ell) \ \in \ \Co
\end{align*}
since $x\notin\{0,1\}^N$, \ie $x$ is not a vertex of the hypercube $[0,1]^N$.

\begin{proof}[Proof of Proposition \ref{prop.fixed.points}]
Let $\Lambda\in\PP_N$, $\sigma\in S_N$, and $\alpha\in\C^*$. Set $g:=T_\Lambda R_\sigma M_\alpha$ and $A:=r(\sigma)d(\Lambda)$. We have to find the solutions $(w,a,\theta)\in\DD_N$ of the equation $g(w,a,\theta)=(w,a,\theta)$, so we must solve
\begin{align}\label{Eq.fix.points}
(w,a,\theta)=\big(\alpha w-\alpha\Tr(a,\sigma^{-1}[\Lambda])\,,\,\alpha aA\,,\,\theta A\big).
\end{align}
It can be verified that $a=\alpha aA$ and $\theta=\theta A$ if and only if 
\begin{align}\label{Eq.fix.points.coor.}
a_\ell=\alpha(-1)^{\ind_\Lambda(\ell)} a_{\sigma^{-1}(\ell)} \qquad \text{and} \qquad 
\theta_\ell=(-1)^{\ind_\Lambda(\ell)} \theta_{\sigma^{-1}(\ell)} \qquad (\forall\ell\in P_N),
\end{align}
where $\ind_\Lambda$ denotes the usual indicator function (see \eqref{indicator.funct.}). If \eqref{Eq.fix.points.coor.} holds, an elementary computation shows that $\Tr(\alpha a,\sigma^{-1}[\Lambda])=-\Tr(a,\Lambda)$ and $\Tr(\theta,\sigma^{-1}[\Lambda])=-\Tr(\theta,\Lambda)$.

Suppose that $\alpha=1$. Then it readily follows that \eqref{Eq.fix.points} holds if and only if $\Tr(a,\Lambda)=0$ and $a, \theta\in E_1[A]$. This proves the first statement in \eqref{prop.fixed.points.alpha=1}. To prove the last one, suppose that \eqref{Eq.fix.points} holds. Then Proposition \ref{T_NR_N.trans.form.} \eqref{T_NR_N.trans.form.trans.} implies $(1-J_g(c\theta))\LN_N(s,w,a,c\theta)=0$  for all $s\in\C\smallsetminus P_N$ and all $c\in\R$ close enough to 1. Therefore $J_g(c\theta)=1$, and so 
\begin{align*}
\e\big(c\Tr(\theta,\Lambda)\big)=(-1)^{|\Lambda|},
\end{align*}
for all $c\in\R$ close enough to 1. But this is possible only if $|\Lambda|$ is even and $\Tr(\theta,\Lambda)=0$.

Suppose that $\alpha\not=1$. If \eqref{Eq.fix.points.coor.} holds, then 
\begin{align*}
\frac{(\alpha-1)}{2}&\Tr(a,P_N)\\
&=\frac{1}{2}\Big[\alpha\Tr(a,P_N)-\Tr(a,P_N)\Big]\\
&=\frac{\alpha}{2}\Big[\Tr(a,P_N)+\Tr(a,\sigma^{-1}[\Lambda])-\Tr(a,\sigma^{-1}[P_N\smallsetminus \Lambda])\Big] \qquad\qquad [\text{by \eqref{Eq.fix.points.coor.}}]\\
&=\frac{\alpha}{2}\Big[\Tr(a,P_N)+\Tr(a,\sigma^{-1}[\Lambda])-\Tr(a,P_N)+\Tr(a,\sigma^{-1}[\Lambda])\Big] \qquad [\text{by \eqref{Tr.prop.sets.minus}}]\\
&=\alpha \Tr(a,\sigma^{-1}[\Lambda]).
\end{align*}
Hence \eqref{Eq.fix.points} holds if and only if $w=\Tr(a,P_N)/2$, $a\in E_{\alpha^{-1}}[A]$, and $\theta\in E_{1}[A]$, and we have inclusions $\DD_N^g\subseteq\D_{N,\text{Pol}}\subseteq\D_N$ (see \eqref{D^*}). Finally, the last statement in \eqref{prop.fixed.points.any.alpha} amounts to say that any eigenvalue of $A$ is a root of unity, which is clear.
\end{proof}

\section{The function $\LN$}

In this final section we prove Theorems \ref{Thm.constr.L} and \ref{Thm.transf.form.}. Then Corollaries \ref{Coro.trans.form.} and \ref{Coro.trans.form.fix.points} follow by elementary arguments of complex analysis.

\subsection{On Theorem \ref{Thm.constr.L}} We start by addressing the extension of $\LN_N$ ($N\geq1$) to the domain $(\C\smallsetminus P_N)\times \Dit_N$, where $\Dit_N=\pi^{-1}[\Co]$ as defined in \eqref{Dit}. In fact, since every element $\delta\in\Di_N$ satisfies $\pi(\delta)\in\Co^\circ$ by definition (see \eqref{Di_N}), we have $\Di_N\subseteq\Dit_N$. To get the desired extension, we first prove that every element in $\Dit_N$ can be approximated by elements in $\Di_N$.

\begin{lemma}\label{Dit.approx}
Let $N$ be a positive integer, and let $(w,a,\theta)\in\Dit_N$. There exists $\omega_0<0$ such that
\begin{align*}
\omega_0\leq\omega<0 \quad\Longrightarrow\quad M_{\e^{i\omega}}(w,a,\theta)\in\Di_N \qquad (\forall \, \omega\in\R).
\end{align*}
\end{lemma}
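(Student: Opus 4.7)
The plan is to unpack the three defining conditions of $\Di_N$ (see \eqref{Di_N} and \eqref{good.domain}) and show that each one survives an infinitesimal clockwise rotation. Throughout, I write $\delta_\omega := M_{\e^{i\omega}}(w,a,\theta) = (\e^{i\omega}w,\,\e^{i\omega}a,\,\theta)$.

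First I would handle the $\DD_N$-membership, which is immediate: the rotation $a \mapsto \e^{i\omega}a$ preserves the zero set $a^{-1}[0]$, while $\theta$ is untouched, so the condition $a^{-1}[0]\subseteq\theta^{-1}[\R\smallsetminus\Z]$ is unaffected. Next I would ensure $\e^{i\omega}a \in [\C\smallsetminus i\cdot\R^*]^N$. For each $\ell\in P_N$ with $a_\ell\neq0$, write $a_\ell = |a_\ell|\e^{i\alpha_\ell}$ with $\alpha_\ell\in(-\pi,\pi]$; then $\e^{i\omega}a_\ell\in i\cdot\R^*$ forces $\omega\in\{\pm\pi/2-\alpha_\ell\}+\pi\Z$. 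Since there are only finitely many $a_\ell$, I can pick $\omega_0<0$ small enough so that no such obstruction occurs for $\omega\in[\omega_0,0)$.

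The main step is the control of the projection $\pi$. The key computation I would carry out is that, for $\omega_0$ small enough, the index set $a^{-1}[-\Co]$ is preserved under the rotation, i.e.\ $(\e^{i\omega}a)^{-1}[-\Co]=a^{-1}[-\Co]$ for $\omega\in[\omega_0,0)$. One checks this by cases on $a_\ell$: coordinates in the open right (resp.\ left) half-plane stay there under a small rotation; coordinates $a_\ell\in i\R_+\subset\Co$ rotate into $\Co^\circ$ (since $\re(i\e^{i\omega})=-\sin\omega>0$ for $\omega<0$), still outside $-\Co$; coordinates $a_\ell\in i\R_-\subset -\Co$ rotate into $-\Co^\circ$, still inside $-\Co$; and $a_\ell=0$ stays at $0$. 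Granting this, the linearity of $\Tr$ in its first slot (Lemma~\ref{Tr.prop.}~\eqref{Tr.prop.linear}) yields
\begin{equation*}
\pi(\delta_\omega)=\e^{i\omega}w-\Tr\bigl(\e^{i\omega}a,\,a^{-1}[-\Co]\bigr)=\e^{i\omega}\pi(w,a,\theta).
\end{equation*}

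Finally, I would verify $\e^{i\omega}\pi(w,a,\theta)\in\Co^\circ$ for sufficiently small $|\omega|$. Since $(w,a,\theta)\in\Dit_N$, by definition $\pi(w,a,\theta)\in\Co$, and $0\notin\Co$, so either $\pi(w,a,\theta)$ lies in the open right half-plane $\Co^\circ$, in which case it remains there for all sufficiently small $|\omega|$, or $\pi(w,a,\theta)\in i\R_+$, in which case the same computation $\re(i\e^{i\omega})=-\sin\omega>0$ for $\omega<0$ places $\e^{i\omega}\pi(w,a,\theta)$ inside $\Co^\circ$. Shrinking $\omega_0$ if necessary to accommodate this last condition alongside the ones above, one obtains a single $\omega_0<0$ that works for the entire interval $[\omega_0,0)$, which is what is claimed. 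I expect no substantial obstacle: the only mild subtlety is the simultaneous accommodation of boundary-of-$\Co$ behaviour for both $\pi(w,a,\theta)$ and each imaginary $a_\ell$, and this is precisely why the rotation is taken in the \emph{clockwise} direction (the negative imaginary axis is the ``bad'' side of $\Co$, and $\omega<0$ sweeps away from it).
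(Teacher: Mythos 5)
Your proof is correct and follows essentially the same route as the paper's: both arguments reduce to checking that for sufficiently small clockwise rotations the index set $a^{-1}[-\Co]$ is unchanged, so that $\pi(M_{\e^{i\omega}}(w,a,\theta))=\e^{i\omega}\pi(w,a,\theta)$, which lands in $\Co^\circ$ because the clockwise rotation pushes the boundary ray $i\R_+$ of $\Co$ into the open right half-plane. Your version merely spells out the case-by-case verification that the paper leaves implicit.
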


\begin{proof}
Note that $\pi(w,a,\theta)\in\Co$. We choose $\omega_0<0$ such that 
\begin{align*}
\e^{i\omega}\pi(w,a,\theta)\in\Co^\circ \qquad\text{and}\qquad \e^{i\omega}a\in[\C\smallsetminus i\cdot\R^*]^N \qquad\qquad \big(\forall\,\omega\in[\omega_0,0)\big).
\end{align*}
Consequently we have
\begin{align*}
a^{-1}[-\Co]=(\e^{i\omega}a)^{-1}[-\Co^\circ]=(\e^{i\omega}a)^{-1}[-\Co] \qquad\qquad \big(\forall\,\omega\in[\omega_0,0)\big),
\end{align*}
and so $(\pi M_{\e^{i\omega}})(w,a,\theta)=\e^{i\omega}\pi(w,a,\theta)$ lies in $\Co^\circ$ for all $\omega\in[\omega_0,0)$. In view of \eqref{Di_N}, this ends the proof.
\end{proof}

\begin{proof}[Proof of Theorem \ref{Thm.constr.L}]
Recall the function $\LN_N:(\C\smallsetminus P_N)\times \Dit_N\to\C$ defined in \eqref{L.final} by
\begin{align*}
\LN_N(s,w,a,\theta):=\lim_{\substack{\omega\to0\\ \omega<0}}\LN_{N,\epsilon}\big(s,M_{\e^{i\omega}}(w,a,\theta)\big).
\end{align*}
In view of Proposition~\ref{LN} and Lemma~\ref{Dit.approx}, we know that the right-hand side of the last equation is actually independent of the choice of $\epsilon\in\E$. Therefore the next step is to show that the limit exists.

Let $(w,a,\theta)\in\Dit_N$. Lemma~\ref{Dit.approx} implies that $M_{\e^{i\omega}}(w,a,\theta)\in\Di_N$ for all negative $\omega$ close enough to 0 (abbr. for all $\omega\to0^-$). Since $(\e^{i\omega}a)^{-1}[0]=a^{-1}[0]$, Proposition~\ref{LN}~\eqref{LN.proj.} allows us to assume $a^{-1}[0]=\emptyset$. Consequently, Proposition~\ref{T_NR_N.trans.form.}~\eqref{T_NR_N.trans.form.act.} implies the existence of $\Lambda\in\PP_N$ such that $(T_\Lambda M_{\e^{i\omega}})(w,a,\theta)\in \mathcal{T}_N^+$ for all $\omega\to0^-$. Then, in view of Proposition~\ref{T_NR_N.trans.form.}~\eqref{T_NR_N.trans.form.trans.}, Proposition~\ref{LN}~\eqref{LN.restr.}, and the holomorphicity of $\zeta_N$ on $(\C\smallsetminus P_N)\times p(\Ds_N)$ (see \eqref{series anal. cont.}), we have
\begin{align}\label{extension.limit}
\nonumber\lim_{\substack{\omega\to0\\ \omega<0}}\LN_N\big(s,M_{\e^{i\omega}}(w,a,\theta)\big)&=(-1)^{|\Lambda|}\e\big(-\Tr(\theta,\Lambda)\big)\lim_{\substack{\omega\to0\\ \omega<0}}\zeta_N\big(s,(T_\Lambda M_{\e^{i\omega}})(w,a,\theta)\big)\\
&=(-1)^{|\Lambda|}\e\big(-\Tr(\theta,\Lambda)\big)\zeta_N\big(s,T_\Lambda (w,a,\theta)\big)
\end{align}
for all $s\in\C\smallsetminus P_N$. This proves that the limit exists and also proves \eqref{Thm.constr.L.mero.s}. \eqref{Thm.constr.L.mero.swa} was already established in Proposition~\ref{T_NR_N.trans.form.}~\eqref{T_NR_N.trans.form.hol.}, while \eqref{Thm.constr.L.rest.} follows from \eqref{extension.limit} and Proposition~\ref{LN}~\eqref{LN.restr.}.
\end{proof}

\subsection{On Theorem \ref{Thm.transf.form.}} We first use Lemma~\ref{Dit.approx} to show that the transformation formula for $\LN_N$ under the $\mathfrak{T}_N\mathfrak{R}_N$-action on $\Di_N$ can be extended to $\Dit_N$. In fact, Lemmas \ref{lemma.R_sigma} and \ref{lemma.T_Lambda} state that the group $\mathfrak{T}_N\mathfrak{R}_N$ is contained in $\Aut(\DD_N)_\pi\cap \mathrm{C}(\mathfrak{M}_N)$. Then $\mathfrak{T}_N\mathfrak{R}_N$ acts on $\Dit_N$ and
\begin{align}
\nonumber &\LN_N\big(s,(T_\Lambda R_\sigma)(w,a,\theta)\big)=\lim_{\substack{\omega\to0\\ \omega<0}}\LN_N\big(s,(M_{\e^{i\omega}}T_\Lambda R_\sigma)(w,a,\theta)\big)\\
\nonumber &=\lim_{\substack{\omega\to0\\ \omega<0}}\LN_N\big(s,(T_\Lambda R_\sigma M_{\e^{i\omega}})(w,a,\theta)\big)
=J_{T_\Lambda R_\sigma}(\theta) \lim_{\substack{\omega\to0\\ \omega<0}}\LN_N\big(s,M_{\e^{i\omega}}(w,a,\theta)\big)\\
\label{Transf.form.T_NR_N.final}&=J_{T_\Lambda R_\sigma}(\theta) \LN_N(s,w,a,\theta)
\end{align}
for all $s\in\C\smallsetminus P_N$, $(w,a,\theta)\in\Dit_N$, $\Lambda\in\PP_N$, and $\sigma\in S_N$. 

The fact that $\mathfrak{T}_N\mathfrak{R}_N$ also acts on $\D_N$ is an immediate consequence of the inclusion $\mathfrak{T}_N\mathfrak{R}_N\subseteq \Aut(\DD_N)_\pi\cap \mathrm{C}(\mathfrak{M}_N)$, while the $\mathfrak{M}_N$-action on $\D_N$ is guaranteed by definition. Hence \eqref{Transf.form.T_NR_N.final} also holds if we replace $\Dit_N$ by $\D_N$. However, proving that we can transfer the transformation formula for $\LN_N$ under $\mathfrak{M}_N$-transformations is more subtle. In fact, we would like to use Proposition \ref{transf.form.rot.prop.} together with Lemma \ref{Dit.approx} as before, but then we must still take care of the set  
\begin{align*}
\Di_N(\psi)=\Di_N\,\cap\, M_{\e^{i\psi}}^{-1}[\Di_N]\,\cap\,\bigcap_{0<\mathrm{sgn}(\psi)\cdot t<|\psi|}(\pi M_{\e^{it}})^{-1}[\Co] \qquad\qquad (\psi\in[-\pi,\pi])
\end{align*}
given in \eqref{Di_N(psi)}. To this end we define, for each nonzero angle $\psi\in[-\pi,\pi]$, the set

\begin{align}\label{Dit_N(psi)}
\Dit_N(\psi):=\bigcap_{0\leq\mathrm{sgn}(\psi)\cdot t\leq|\psi|}(\pi M_{\e^{it}})^{-1}[\Co] \qquad \text{and} \qquad \Dit_N(0):=\Dit_N.
\end{align}

\begin{lemma}\label{Dit(psi).approx}
Let $N$ be a positive integer, let $\psi\in[-\pi,\pi]$, and let $(w,a,\theta)\in\Dit_N(\psi)$. There exists $\omega_0<0$ such that
\begin{align*}
\omega_0\leq\omega<0 \quad\Longrightarrow\quad M_{\e^{i\omega}}(w,a,\theta)\in\Di_N(\psi) \qquad (\forall \, \omega\in\R).
\end{align*}
\end{lemma}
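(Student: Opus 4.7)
The plan is to verify the three conditions that go into the definition of $\Di_N(\psi)$ separately, invoking Lemma~\ref{Dit.approx} at both endpoints of the rotation interval and handling the interior by a brief continuity-plus-combinatorics argument.

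If $\psi=0$, then $\Di_N(0)=\Di_N$ and $\Dit_N(0)=\Dit_N$ by definition, so the statement is exactly Lemma~\ref{Dit.approx}. For $\psi\neq 0$ I would treat $\psi>0$, the case $\psi<0$ being symmetric. Unfolding $M_{\e^{i\omega}}(w,a,\theta)\in\Di_N(\psi)$ yields three conditions to verify: (a) $M_{\e^{i\omega}}(w,a,\theta)\in\Di_N$; (b) $M_{\e^{i(\psi+\omega)}}(w,a,\theta)\in\Di_N$; and (c) $\pi M_{\e^{i(t+\omega)}}(w,a,\theta)\in\Co$ for every $t\in(0,\psi)$. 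The hypothesis $(w,a,\theta)\in\Dit_N(\psi)$, specialised at $t=0$ and $t=\psi$, furnishes $(w,a,\theta)\in\Dit_N$ and $M_{\e^{i\psi}}(w,a,\theta)\in\Dit_N$ respectively, and I would apply Lemma~\ref{Dit.approx} to each to produce thresholds $\omega_1,\omega_2<0$ securing (a) and (b).

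For (c) I would impose $|\omega|<\psi$ and split according to the sign of $s:=t+\omega$. When $s\in[0,\psi)$, the conclusion is immediate from the hypothesis that $(w,a,\theta)\in\Dit_N(\psi)$. When $s\in(\omega,0)$, I would reuse the identity driving the proof of Lemma~\ref{Dit.approx}: for $|\omega|$ small enough, $\e^{is}a\in[\C\smallsetminus i\R^*]^N$ and $(\e^{is}a)^{-1}[-\Co]=a^{-1}[-\Co]$ hold uniformly in $s\in[\omega,0)$, so $\pi M_{\e^{is}}(w,a,\theta)=\e^{is}\pi(w,a,\theta)$. Because $\pi(w,a,\theta)\in\Co\smallsetminus\{0\}$, a sufficiently small clockwise rotation keeps it in $\Co$: elements of $\Co^\circ$ remain there, and elements of $i\R_+$ drop into $\Co^\circ$.

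The main obstacle, and the only step of real substance past Lemma~\ref{Dit.approx}, is controlling $s\in(\omega,0)$: there $\pi$ is evaluated through the discontinuous map $v\mapsto v^{-1}[-\Co]$. The crucial point is that a small negative $s$ never shifts any coordinate of $a$ across the imaginary axis into $\Co$, which freezes the indicator set at $a^{-1}[-\Co]$ and reduces (c) to the visible continuity of a rotation applied to the single nonzero complex number $\pi(w,a,\theta)$. Choosing $\omega_0$ as the largest of $\omega_1,\omega_2$ and the rotational threshold for $\pi(w,a,\theta)$, capped by $-\psi/2$, then delivers the required $\omega_0$.
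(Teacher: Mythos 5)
Your proposal is correct and follows essentially the same route as the paper: Lemma~\ref{Dit.approx} applied to $(w,a,\theta)$ and to $M_{\e^{i\psi}}(w,a,\theta)$ handles the two $\Di_N$-conditions, and the intermediate angles are split at $t+\omega=0$, with $t+\omega\in[0,\psi)$ covered by the hypothesis $(w,a,\theta)\in\Dit_N(\psi)$ and $t+\omega\in(\omega,0)$ covered by the small-rotation stability of $\pi$ already contained in Lemma~\ref{Dit.approx}. The only cosmetic difference is that the paper quotes the conclusion $M_{\e^{is}}(w,a,\theta)\in\Di_N\subseteq\pi^{-1}[\Co^\circ]$ for the negative range rather than re-deriving the identity $\pi M_{\e^{is}}=\e^{is}\pi$.
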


\begin{proof}
Using Lemma \ref{Dit.approx}, we choose $\omega_0<0$ such that both $M_{\e^{i\omega}}(w,a,\theta)$ and $M_{\e^{i\psi}}M_{\e^{i\omega}}(w,a,\theta)$ lie in $\Di_N$ for all $\omega\in\R$ with $\omega_0\leq \omega<0$. In particular, we have that $M_{\e^{i\omega}}(w,a,\theta)$ and $M_{\e^{i\psi}}M_{\e^{i\omega}}(w,a,\theta)$ lie in $\pi^{-1}[\Co^\circ]$ for all such $\omega$ (see \eqref{Di_N}). Since $\re[\pi]$ is continuous (see Proposition \ref{projection.lemma} \eqref{projection.lemma.Re[pi]}), the above implies that $M_{\e^{i\omega}}(w,a,\theta)$ and $M_{\e^{i\psi}}M_{\e^{i\omega}}(w,a,\theta)$ lie in $\pi^{-1}[\Co]$ for all $\omega\in\R$ with $\omega_0\leq \omega\leq0$.

Suppose that $\psi>0$, and let $\omega$ be a real number such that $\omega_0\leq \omega<0$. The above paragraph shows that $M_{\e^{i\omega}}(w,a,\theta)\in (\pi M_{\e^{it}})^{-1}[\Co]$ for all $t\in[0,-\omega]$. For $t\in(-\omega,\psi]$ we obtain the same result by using the inclusion $(w,a,\theta)\in\Dit_N(\psi)$.

Suppose that $\psi<0$, and let $\omega$ be as above. In the first paragraph we proved that $M_{\e^{i\omega}}(w,a,\theta)\in (\pi M_{\e^{it}})^{-1}[\Co]$ for all $t\in[\psi,\psi-\omega]$. For $t\in(\psi-\omega,0]$ we obtain the same result by using again the inclusion $(w,a,\theta)\in\Dit_N(\psi)$. The case $\psi=0$ is Lemma \ref{Dit.approx}.
\end{proof}

\begin{proof}[Proof of Theorem \ref{Thm.transf.form.}]
Let $s\in\C\smallsetminus P_N$, $(w,a,\theta)\in\D_N$, and $\alpha\in\C^*$. Set $\psi:=\arg(\alpha)$. The convexity of $\Co$ implies that the set $(\pi M_\beta)^{-1}[\Co]$ ($\beta\in\C^*$) depends only on the argument of $\beta$. Then, from \eqref{Dit_N(psi)}, we see that 
\begin{align*}
\D_N:=\bigcap_{\alpha\in\C^*}(\pi M_{\alpha})^{-1}[\Co]=\bigcap_{\eta\in(-\pi,\pi]}\Dit_N(\eta).
\end{align*}
Hence by Lemma \ref{Dit(psi).approx} we know that $M_{\e^{i\omega}}(w,a,\theta)\in \Di_N(\psi)$ for all $\omega<0$ close enough to 0. Therefore, invoking Proposition \ref{transf.form.rot.prop.} and taking the limit as $\omega\to0^-$, we obtain
\begin{align}\label{Eq.almost.final}
\lim_{\substack{\omega\to0\\ \omega<0}}\rho_N^\psi\big(s,M_{\e^{i\omega}}(w,a,\theta)\big)=\Gamma(s)\Big[\alpha^{s}\LN_N\big(s,M_\alpha(w,a,\theta)\big)-\LN_N(s,w,a,\theta)\Big],
\end{align}
where $\alpha^{s}:=\e^{s(\log|\alpha|+i\psi)}$. 

Suppose that $\re(s)<1$. As it can be seen in \eqref{u-poles.F_N}, the $u$-poles of the function $F_N(u,M_{\e^{i\omega}}(w,a,\theta))u^{s-1}$ are actually the $u$-poles of $F_N(u,w,a,\theta)u^{s-1}$ times $\e^{-i\omega}$. Then an elementary computation shows that the respective residues differ by a factor of $\e^{-i\omega s}$ for all $\omega\to0^-$. On the other hand, since $M_{\e^{i\omega}}(w,a,\theta)\in \Di_N(\psi)$ for all $\omega\to0^-$, we have $\mathcal{P}_\psi(\e^{i\omega}a,\theta)=\mathcal{P}_\psi(a,\theta)$ for all such $\omega$. Taking the limit as $\omega\to0^-$ in equation \eqref{rho_N}, we see that 
\begin{align}\label{Eq.final}
\lim_{\substack{\omega\to0\\ \omega<0}}\rho_N^\psi\big(s,M_{\e^{i\omega}}(w,a,\theta)\big)=\rho_N^\psi(s,w,a,\theta),
\end{align}
where $\rho_N^\psi(s,w,a,\theta)$ is the function in Theorem \ref{Thm.transf.form.}. Hence, we obtain \eqref{Transf.form.LN} by combining \eqref{Transf.form.T_NR_N.final}, \eqref{Eq.almost.final}, and \eqref{Eq.final}.

Now we prove the cocycle property of $J$. Let $k\in\Z$, $\delta=(w,a,\theta)\in\D_N$, and $g_j=T_{\Lambda_j}R_{\sigma_j}M_{\alpha_j}\in G_N$ for $j=1,2$. Set 
\begin{align*}
J_{g_j}(\delta):=J_{g_j}(k,\theta)=(-1)^{|\Lambda_j|}\e\big(\Tr(\theta,\sigma_j^{-1}[\Lambda_j])\big)\alpha_j^{-k} \qquad\qquad (j=1,2).
\end{align*}
We must show that $J_{g_1g_2}(\delta)=J_{g_1}(g_2(\delta))\cdot J_{g_2}(\delta)$. First, using \eqref{Tr.prop.sets.minus} and \eqref{Tr.prop.sets.union}, we get
\begin{align*}
 &\Tr\big(\theta,(\sigma_1\sigma_2)^{-1}[\Lambda_1\oplus\sigma_1(\Lambda_2)]\big)\\
 &=\Tr\big(\theta,(\sigma_1\sigma_2)^{-1}[\Lambda_1]\smallsetminus\sigma_2^{-1}[\Lambda_2]\big)-\Tr\big(\theta,(\sigma_1\sigma_2)^{-1}[\Lambda_1]\cap\sigma_2^{-1}[\Lambda_2]\big)+\Tr\big(\theta,\sigma_2^{-1}[\Lambda_2]\big).
\end{align*}
Then the right-hand side of the last equation equals
\begin{align*}
\Tr\big(\theta d(\sigma_2^{-1}[\Lambda_2]) , (\sigma_1\sigma_2)^{-1}[\Lambda_1]\big)+\Tr\big(\theta,\sigma_2^{-1}[\Lambda_2]\big),
\end{align*} 
since $d(\sigma_2^{-1}[\Lambda_2])$ acts on $\theta$ by changing the signs of the coordinates indexed by $\sigma_2^{-1}[\Lambda_2]$.
Hence \eqref{Lambda&Sigma.relations} implies
\begin{align}\label{Tr.cocycle}
\Tr\big(\theta,(\sigma_1\sigma_2)^{-1}[\Lambda_1\oplus\sigma_1(\Lambda_2)]\big)=\Tr\big(\theta r(\sigma_2)d(\Lambda_2) , \sigma_1^{-1}[\Lambda_1]\big)+\Tr\big(\theta,\sigma_2^{-1}[\Lambda_2]\big).
\end{align}
Therefore, Proposition~\ref{prop.R_sigma.T_Lambda.M_alpha} and \eqref{Tr.cocycle} show that
\begin{align*}
J_{g_1g_2}(\delta)&=(-1)^{|\Lambda_1|}\alpha_1^{-k}(-1)^{|\Lambda_2|}\alpha_2^{-k}\e\big(\Tr\big(\theta,(\sigma_1\sigma_2)^{-1}[\Lambda_1\oplus\sigma_1(\Lambda_2)]\big)\big)\\
&=(-1)^{|\Lambda_1|}\alpha_1^{-k}(-1)^{|\Lambda_2|}\alpha_2^{-k}\e\big(\Tr\big(\theta r(\sigma_2)d(\Lambda_2) , \sigma_1^{-1}[\Lambda_1]\big)\big)\e\big(\Tr\big(\theta,\sigma_2^{-1}[\Lambda_2]\big)\big)\\
&=J_{g_1}(g_2(\delta))\cdot J_{g_2}(\delta).
\end{align*}

Finally, in order to prove \eqref{Rho.Trans.Form.}, we first note that \eqref{Transf.form.LN} implies
\begin{align*}
&\LN_N\big(s,gh(\delta)\big)=J_{gh}(s,\delta)\Big[\LN_N(s,\delta)+\rho_N^{\psi_{gh}}(s,\delta)/\Gamma(s)\Big],\\
&\LN_N\big(s,gh(\delta)\big)=J_g\big(s,h(\delta)\big)\Big[J_h(s,\delta)\Big[\LN_N(s,\delta)+\rho_N^{\psi_h}(s,\delta)/\Gamma(s)\Big]+\rho_N^{\psi_{g}}\big(s,h(\delta)\big)/\Gamma(s)\Big],
\end{align*}
for all $s\in\C\smallsetminus P_N$. Combining these two equations, we obtain
\begin{align}
\nonumber &\left[1-\frac{J_g\big(s,h(\delta)\big)J_h(s,\delta)}{J_{gh}(s,\delta)}\right]\Gamma(s)\LN_N(s,\delta)\\
\label{Rho&cocycle} & \qquad\qquad =\frac{J_g\big(s,h(\delta)\big)J_h(s,\delta)}{J_{gh}(s,\delta)}\rho_N^{\psi_h}(s,\delta)+\frac{J_g\big(s,h(\delta)\big)}{J_{gh}(s,\delta)}\rho_N^{\psi_{g}}\big(s,h(\delta)\big)-\rho_N^{\psi_{gh}}(s,\delta)
\end{align}
for all $s\in\C\smallsetminus P_N$. Note that the function in the left-hand side of the above equation is entire in the variable $s$. Let $k$ be an integer which is not a pole of any of the $\rho_N$ appearing in \eqref{Rho&cocycle}. Then, taking the limit as $s\to k$, the desired result follows from elementary computations.
\end{proof}

\section*{Summary of domains} Since one of the main points of this article is to extend domains, we supply a summary of the most relevant ones for further reference. 

The domain containing all the others is
\begin{align*}
\DD_N:=\big\{(w,a,\theta)\in\C\times \C^{N}\times \R^{N} \, \big| \, a^{-1}[0]\subseteq\theta^{-1}[\R\smallsetminus\Z]\big\},
\end{align*}
which gives a natural set of parameters for the test function $F_N$ to be defined.

The domain
\begin{align*}
\mathcal{T}_N^+=\big\{(w,a,\theta)\in\DD_N \ \big| \ \re(w)>0, \ \re(a_\ell)>0, \ 1\leq \ell\leq N\big\}
\end{align*}
provides a set of parameters for the absolute convergence of the series defining $\zeta_N$.

If we extend $\zeta_N$ by means of its series representation, we arrive at
\begin{align*}
&\Ds_N=\bigcup_{\omega\in(-\pi/2,\pi/2)}\mathcal{T}_N^+(\omega),\\
& \mathcal{T}_N^+(\omega)=\big\{(w,a,\theta)\in\DD_N \, | \, (\e^{i\omega}w,\e^{i\omega}a,\theta)\in \mathcal{T}_N^+\big\} \qquad \Big(-\frac{\pi}{2}<\omega<\frac{\pi}{2}\Big).
\end{align*}

If we extend $\zeta_N$ by using its integral representation, we obtain 
\begin{align*}
\Di_N=\big\{(w,a,\theta)\in\DD_N \, \big| \,  \pi(w,a,\theta)\in\Co^\circ \quad \text{and} \quad a\in[\C\smallsetminus i\cdot\R^*]^N\big\},
\end{align*}
where $\pi$ and $\Co$ are defined in \eqref{pi} and \eqref{Co} respectively. Then we add some limit points to get
\begin{align*}
\Dit_N=\big\{(w,a,\theta)\in\DD_N\,|\,\pi(w,a,\theta)\in\Co\big\}.
\end{align*}
Finally, we take the subset 
\begin{align*}
\D_N=\bigcap_{\alpha\in\C^*}(\pi M_\alpha)^{-1}[\Co]
\end{align*}
of $\Dit_N$ stable under the action of $\C^*$ (see \eqref{M_alpha}).

\section*{Acknowledgement} During the preparation of this article I benefited from the facilities of the Institut de Math\'ematiques de Jussieu, the Universit\'e Pierre et Marie Curie, and the Max-Planck-Institut f\"ur Mathematik. I am very grateful to them for their hospitality, and I would like to address special thanks to Pierre Charollois for his generous support and enlightening discussions. Also, I thank the anonymous referee for his/her helpful comments and suggestions.


\begin{thebibliography}{XXX}

\bibitem[Ap]{Ap} T. M. Apostol, {\it On the Lerch zeta function}, Pacific J. Math. {\bf 1} No. 2 (1951),  pp. 161--167.


\bibitem[Ba]{Ba} E. W. Barnes, {\it On the theory of the multiple gamma function}, Trans. Cambridge Philos. Soc. {\bf 19} (1904),  pp. 374--425.



\bibitem[Fr-Ru]{Fr-Ru} E. Friedman and S. Ruijsenaars, {\it Shintani-Barnes zeta and gamma functions}, Adv. Math. {\bf 187} (2004), pp. 362--395.

\bibitem[Hi]{Hi}  H. Hida, \emph{Elementary theory of $L$-functions and Eisenstein series}, London Mathematical Society Student Texts {\bf{26}}, London (1993).

\bibitem[Hi-Sa]{Hi-Sa}  M. Hirose and N. Sato, \emph{On the functional equation of the normalized Shintani L-function of several variables}, Math. Z. {\bf{280}} (2015), pp. 1085-1092.



\bibitem[Ru]{Ru} S. Ruijsenaars, {\it On Barnes' multiple zeta and gamma functions}, Adv. Math. {\bf 156} (2000), pp. 107--132.



\bibitem[Sh1]{Sh1} T. Shintani, {\it On evaluation of zeta functions of totally real algebraic number fields at  non-positive integers}, J. Fac. Sci. Univ. Tokyo, Sec. IA {\bf{23}} (1976), pp. 393--417.

\bibitem[Sh2]{Sh2} T. Shintani, {\it On special values of zeta functions of totally real algebraic number fields}, Proceedings of the International Congress of Mathematicians, Helsinki, 1978, pp. 591--597.
\end{thebibliography}
\end{document}